\newtheorem{thm}{Theorem}
\newtheorem{cor}{Corollary}
\newtheorem{lemma}{Lemma}
\newtheorem{prop}{Proposition}
\newtheorem{defn}{Definition}
\newtheorem{remark}{Remark}
\newtheorem{conj}{Conjecture}
\newtheorem{nt}{Notation}
\begin{document}
\title[HOMFLYPT skein sub-modules of the lens spaces $L(p, 1)$]
  {HOMFLYPT skein sub-modules of the lens spaces $L(p, 1)$}

\author{Ioannis Diamantis}
\address{ International College Beijing,
China Agricultural University,
No.17 Qinghua East Road, Haidian District,
Beijing, {100083}, P. R. China.}
\email{ioannis.diamantis@hotmail.com}

\keywords{HOMFLYPT polynomial, skein modules, solid torus, Iwahori--Hecke algebra of type B, mixed links, mixed braids, lens spaces. }

\subjclass[2010]{57M27, 57M25, 20F36, 20F38, 20C08}

\setcounter{section}{-1}

\date{}

\begin{abstract}
In this paper we work toward the HOMFLYPT skein module of $L(p, 1)$, $\mathcal{S}(L(p,1))$, via braids. Our starting point is the linear Turaev-basis, $\Lambda^{\prime}$, of the HOMFLYPT skein module of the solid torus ST, $\mathcal{S}({\rm ST})$, which can be decomposed as the tensor product of the ``positive'' ${\Lambda^{\prime}}^+$ and the ``negative'' ${\Lambda^{\prime}}^-$ sub-modules, and the Lambropoulou invariant, $X$, for knots and links in ST, that captures $S({\rm ST})$. It is a well-known result by now that $\mathcal{S}(L(p, 1))=\frac{\mathcal{S}(ST)}{<bbm's>}$, where bbm's (braid band moves) denotes the isotopy moves that correspond to the surgery description of $L(p, 1)$. Namely, a HOMFLYPT-type invariant for knots and links in ST can be extended to an invariant for knots and links in $L(p, 1)$ by imposing relations coming from the performance of bbm's and solving the infinite system of equations obtained that way.
\smallbreak
In this paper we work with a new basis of $\mathcal{S}({\rm ST})$, $\Lambda$, and we relate the infinite system of equations obtained by performing bbm's on elements in $\Lambda^+$ to the infinite system of equations obtained by performing bbm's on elements in $\Lambda^-$ via a map $I$. More precisely we prove that the solutions of one system can be derived from the solutions of the other. Our aim is to reduce the complexity of the infinite system one needs to solve in order to compute $\mathcal{S}(L(p,1))$ using the braid technique. Finally, we present a generating set and a potential basis for $\frac{\Lambda^+}{<bbm's>}$ and thus, we obtain a generating set and a potential basis for $\frac{\Lambda^-}{<bbm's>}$. We also discuss further steps needed in order to compute $\mathcal{S}(L(p,1))$ via braids.
\end{abstract}

\maketitle

\section{Introduction and overview}\label{intro}

Skein modules were independently introduced by Przytycki \cite{P} and Turaev \cite{Tu} as generalizations of knot polynomials in $S^3$ to knot polynomials in arbitrary 3-manifolds. The essence is that skein modules are formal linear sums of (oriented) links in a 3-manifold $M$, modulo some local skein relations.

\begin{defn}\rm
Let $M$ be an oriented $3$-manifold, $R=\mathbb{Z}[u^{\pm1},z^{\pm1}]$, $\mathcal{L}$ the set of all oriented links in $M$ up to ambient isotopy in $M$ and let $S$ be the submodule of $R\mathcal{L}$ generated by the skein expressions $u^{-1}L_{+}-uL_{-}-zL_{0}$, where
$L_{+}$, $L_{-}$ and $L_{0}$ comprise a Conway triple represented schematically by the illustrations in Figure~\ref{skein}.

\begin{figure}[!ht]
\begin{center}
\includegraphics[width=1.7in]{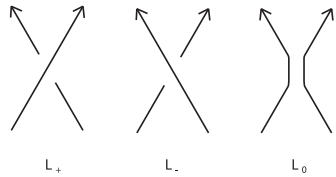}
\end{center}
\caption{The links $L_{+}, L_{-}, L_{0}$ locally.}
\label{skein}
\end{figure}

\noindent For convenience we allow the empty knot, $\emptyset$, and add the relation $u^{-1} \emptyset -u\emptyset =zT_{1}$, where $T_{1}$ denotes the trivial knot. Then the {\it HOMFLYPT skein module} of $M$ is defined to be:

\begin{equation*}
\mathcal{S} \left(M\right)=\mathcal{S} \left(M;{\mathbb Z}\left[u^{\pm 1} ,z^{\pm 1} \right],u^{-1} L_{+} -uL_{-} -zL{}_{0} \right)={\raise0.7ex\hbox{$
R\mathcal{L} $}\!\mathord{\left/ {\vphantom {R\mathcal{L} S }} \right. \kern-\nulldelimiterspace}\!\lower0.7ex\hbox{$ S  $}}.
\end{equation*}

\end{defn}

\noindent The HOMFLYPT skein module of a 3-manifold is very hard to compute (see \cite{HP} for a survey on skein modules). For example, $\mathcal{S}(S^3)$ is freely generated by the unknot (\cite{FYHLMO, PT}). Let now ST denote the solid torus. In \cite{Tu}, \cite{HK} the Homflypt skein module of the solid torus has been computed using diagrammatic methods by means of the following theorem:

\begin{thm}[Turaev, Kidwell--Hoste] \label{turaev}
The skein module $\mathcal{S}({\rm ST})$ is a free, infinitely generated $\mathbb{Z}[u^{\pm1},z^{\pm1}]$-module isomorphic to the symmetric
tensor algebra $SR\widehat{\pi}^0$, where $\widehat{\pi}^0$ denotes the conjugacy classes of non trivial elements of $\pi_1(\rm ST)$.
\end{thm}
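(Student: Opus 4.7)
The plan is to establish the theorem in two stages: first show that monomials in specified generators span $\mathcal{S}(\mathrm{ST})$, then show they are linearly independent over $R = \Z[u^{\pm 1}, z^{\pm 1}]$. For the spanning step, I would represent an arbitrary link $L \subset \mathrm{ST}$ by a diagram in the annulus (equivalently, as a closed braid in the annular braid setting of Lambropoulou). Induct on the number of crossings and use the skein relation $u^{-1}L_+ - uL_- - zL_0 = 0$ to trade each crossing for a smoothing together with a sign-reversed crossing; combined with a descending-diagram argument analogous to the $S^3$ case, this reduces any link modulo $S$ to an $R$-linear combination of unions of pairwise disjoint embedded closed curves in the annulus. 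Each component either bounds a disk in ST, in which case it is absorbed using the auxiliary relation $u^{-1}\emptyset - u\emptyset = z T_1$ together with the $S^3$ unknot evaluation, or winds $k \neq 0$ times around the core; denoting such a curve by $t_k$, the spanning set is the set of monomials $t_{k_1}\cdots t_{k_n}$ with $k_i \neq 0$. Since disjoint union is unordered, these monomials are naturally indexed by elements of $SR\widehat{\pi}^0$, yielding a surjection $SR\widehat{\pi}^0 \twoheadrightarrow \mathcal{S}(\mathrm{ST})$.

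The main obstacle is the linear independence half, since a priori some further global relation could collapse distinct monomials. The strategy is to construct an $R$-linear map from $R\mathcal{L}$ into a model free $R$-module on which the monomials $t_{k_1}\cdots t_{k_n}$ are manifestly independent, and to verify that this map kills the submodule $S$ generated by the skein expressions. A natural realization is via a Markov-type trace on the tower of affine (or annular) Iwahori--Hecke algebras of type $A$: the conjugacy classes of $\pi_1(\mathrm{ST}) \cong \Z$ correspond to standard loop-generators around the hole, and the trace distinguishes distinct monomials by standard symmetric-function arguments. Alternatively, one can compute the HOMFLYPT polynomial of suitable satellites of each monomial by distinct patterns in $S^3$, organised so that distinct monomials yield distinct polynomials. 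Either route reduces independence to a classical representation-theoretic or combinatorial statement about characters of conjugacy classes of $\Z$.

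Finally, the identification with $SR\widehat{\pi}^0$ is immediate once the two halves are combined: since $\pi_1(\mathrm{ST}) \cong \Z$ is abelian, $\widehat{\pi}^0$ is in bijection with $\Z \setminus \{0\}$, and the symmetric tensor algebra on this set is the polynomial $R$-algebra on $\{t_k\}_{k \neq 0}$, matching the monomial basis produced above. The hardest part of carrying out this plan is engineering the invariants used in the independence step with enough resolution to separate every monomial; this is precisely where the original arguments of Turaev (via Hecke-algebra machinery) and of Hoste--Kidwell (via a more direct diagrammatic evaluation) diverge and where the substantive work lies.
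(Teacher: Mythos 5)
This theorem is quoted in the paper from \cite{Tu} and \cite{HK} as background; the paper itself contains no proof of it, so your attempt can only be measured against the arguments in those sources. Your two-stage plan (spanning by skein reduction, then linear independence via an invariant valued in a model free module) does match the architecture of the known proofs, and your identification of $SR\widehat{\pi}^0$ with the polynomial algebra on $\{t_k\}_{k\neq 0}$ via $\widehat{\pi}^0 \leftrightarrow \Z\setminus\{0\}$ is correct. But there are two substantive problems.

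First, the terminal objects of your skein reduction are misdescribed. An \emph{embedded} closed curve in the annulus is either null-homotopic or isotopic to the core, so it cannot ``wind $k\neq 0$ times around the core'' for $|k|\geq 2$. The actual generators $t_k$ are knots embedded in ${\rm ST}={\rm Annulus}\times I$ whose annular diagrams necessarily retain crossings (cf.\ Figure~\ref{tur}); the descending argument terminates not at crossingless diagrams but at diagrams whose remaining crossings are arranged so that the skein relation can no longer simplify them. Making the induction terminate at precisely these descending stacks, and showing the resulting expression is independent of the choices made, is the content of the spanning half and is not automatic from the $S^3$ argument. Second, and more seriously, the linear independence half is entirely deferred: ``the trace distinguishes distinct monomials by standard symmetric-function arguments'' and ``organised so that distinct monomials yield distinct polynomials'' are assertions of exactly the statement to be proved. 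This is where all the work lies --- Turaev constructs invariants via Hecke-algebra traces (in the braid setting of this paper, the type-B trace of Theorem~\ref{tr}, which satisfies ${\rm tr}(t^{k_0}{t_1'}^{k_1}\cdots {t_n'}^{k_n})=s_{k_n}\cdots s_{k_1}s_{k_0}$ and hence separates monomials because the $s_k$ are independent parameters), while Hoste--Kidwell argue diagrammatically. Without exhibiting such a well-defined map out of $\mathcal{S}({\rm ST})$ and verifying it kills the skein submodule $S$, the surjection $SR\widehat{\pi}^0\twoheadrightarrow \mathcal{S}({\rm ST})$ you construct could a priori have nontrivial kernel, and the theorem is not established.
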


A basic element of $\mathcal{S}({\rm ST})$ in the context of \cite{Tu, HK}, is illustrated in Figure~\ref{tur}. Note that in the diagrammatic setting of \cite{Tu} and \cite{HK}, ST is considered as ${\rm Annulus} \times {\rm Interval}$. 

\begin{figure}
\begin{center}
\includegraphics[width=1.3in]{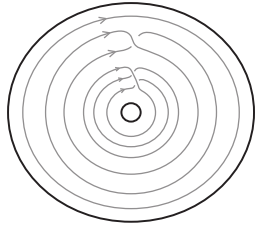}
\end{center}
\caption{A basic element of $\mathcal{S}({\rm ST})$.}
\label{tur}
\end{figure}

\smallbreak

$\mathcal{S}({\rm ST})$ is well-studied and understood by now. It forms a commutative algebra with multiplication induced by embedding two solid tori in one in a standard way. Let now $\mathcal{B}^+$ denote the sub-algebra of $\mathcal{S}({\rm ST})$, freely generated by elements that are clockwise oriented (see Fig.~\ref{tur}) and let $\mathcal{B}^-$ denote the sub-algebra of $\mathcal{S}({\rm ST})$, freely generated by elements with counter-clockwise orientation. Let also $\mathcal{B}_k^+$ denote the sub-module generated by elements in $\mathcal{B}^+$ whose winding number is equal to $k\in \mathbb{N}$ and $\mathcal{B}_{-k}^-$ denote the sub-module generated by elements in $\mathcal{B}^-$ whose winding number is equal to $k$. As a linear space, $\mathcal{B}^+$ is graded by $$\mathcal{B}^+ \cong \underset{k\geq 0}{\oplus}\, \mathcal{B}_k^+$$

\noindent and similarly, $\mathcal{B}^-$ is graded by $$\mathcal{B}^- \cong \underset{k\geq 0}{\oplus}\, \mathcal{B}_{-k}^-.$$
Finally, we have the following module decomposition:

\[
\mathcal{S}({\rm ST})\ =\ \underset{\lambda, \mu\geq 0}{\oplus}\, \mathcal{B}_{-\lambda}^-\, \otimes\, \mathcal{B}_{\mu}^+.
\]

The Turaev-basis of $\mathcal{S}({\rm ST})$ is described in Equation~\ref{Lpr} in open braid form (see left illustration of Figure~\ref{basel}). In \cite{DL2} a new basis, $\Lambda$, of $\mathcal{S}({\rm ST})$ is presented via braids, that naturally describes isotopy in $L(p, 1)$. This basis was obtained by relating the Turaev-basis to $\Lambda$ via a lower triangular matrix with invertible elements in the diagonal, and is presented in Equation~\ref{basis} in open braid form (see right illustration of Figure~\ref{basel}). The sets $\mathcal{B}^+$ and $\mathcal{B}^-$ are presented in Equation~\ref{Bpn} and the sets $\mathcal{B}_{k}^+$ and $\mathcal{B}_{-k}^-$ are presented in Equations~\ref{lamaugp} and \ref{lamaugn} respectively.

\smallbreak

In \cite{DLP} the relation between $\mathcal{S}({\rm ST})$ and $\mathcal{S}(L(p, 1))$ is established and it is shown that:

\begin{equation}\label{inft}
\mathcal{S}(L(p, 1))=\frac{\mathcal{S}({\rm ST})}{<bbm_i>}\, ,
\end{equation}

\noindent where $<bbm_i>$ corresponds to the relations coming from the performance of all possible braid band moves (or slide moves) on elements in a basis of $\mathcal{S}({\rm ST})$. More precisely, Eq.~(\ref{inft}) suggests that in order to compute $\mathcal{S}(L(p, 1))$, we need to consider elements in $\mathcal{S}({\rm ST})$, apply all possible bbm's and identify all linear dependent elements. A step toward a simplification of the above infinite system of equations can be found in \cite{DL4}, where it is shown that in order to compute $\mathcal{S}(L(p, 1))$ it suffices to consider elements in an augmented set, $\Lambda^{aug}$ (Equation~\ref{lamaug},) and perform bbm's only on their first moving strand (the strand that lies closer to the surgery strand), i.e. $\mathcal{S}(L(p, 1))=\frac{\Lambda^{aug}}{<bbm_1>}$. In that way more control over the infinite system is obtained.

\smallbreak

In this paper we consider the module obtained by solving the infinite system of equations (\ref{inft}), where we only consider elements in $\Lambda^{aug_+}$, a set related to $\mathcal{B}^+$ and that is presented in Eq.~\ref{lamaugp}, and we perform braid band moves on their first moving strands, namely $\frac{\Lambda^{aug_+}}{<bbm_1>}$. Similarly, we consider the module $\frac{\Lambda^{aug_-}}{<bbm_1>}$ obtained by solving the infinite system of equations by considering elements in $\Lambda^{aug_-}$ (Definition~\ref{ponel}, Eq.~\ref{lamaugn}), and we perform braid band moves on their first moving strands. We then relate these modules via two maps $f$ and $I$ defined in Definition~\ref{mapfI}, and in particular we show that the solution of the system $\frac{\Lambda^{aug_-}}{<bbm_1>}$ can be derived from the solution of the system $\frac{\Lambda^{aug_+}}{<bbm_1>}$. In this way we simplify the infinite system (\ref{inft}). Furthermore, we provide potential bases for $\frac{\Lambda^{aug_+}}{<bbm_1>}$ and $\frac{\Lambda^{aug_-}}{<bbm_1>}$ and we present results suggesting that the full solution of the infinite system (\ref{inft}) would correspond to the basis of $\mathcal{S}(L(p, 1))$ presented in \cite{GM}, and which was obtained using diagrammatic methods. Finally, the braid technique can also be applied for computing other types of skein modules. The interested reader is referred to \cite{D2} and \cite{D3} for the case of Kauffman bracket skein modules of 3-manifolds. 

\smallbreak

The paper is organized as follows: In \S\ref{prel} we discuss isotopy \& braid equivalence for knots and links in $L(p, 1)$ (\cite{DL1, LR1}) and in \S\ref{SolidTorus} we recall the setting and the essential techniques and results from \cite{La1, La2, LR1, LR2, DL1, DL2, DL3, DL4, DLP} in order to describe the HOMFLYPT skein module of ST via braids. In particular, we present the generalized Hecke algebra of type B, and through a unique trace defined on this algebra, we present the Lambropoulou invariant for knots and links in ST that captures $\mathcal{S}({\rm ST})$. In \S\ref{ordsec} we present an ordering relation defined on $\mathcal{S}({\rm ST})$, which is crucial in order to obtain the new basis of $\mathcal{S}({\rm ST})$, $\Lambda$, and in \S\ref{turtolam} we present results from \cite{DL2} that are used in order to relate the sets $\Lambda$ and $\Lambda^{\prime}$ via an infinite triangular matrix with invertible elements in the diagonal. Moreover, in \S\ref{homlen} we briefly discuss the relation of $\mathcal{S}(L(p, 1))$ to $\mathcal{S}({\rm ST})$ presented in \cite{DLP, DL4}. In \S\ref{relposneg} we relate the modules $\frac{\Lambda^{aug}_{+}}{<bbm_1>}$ and $\frac{\Lambda^{aug}_{-}}{<bbm_1>}$ via the maps $f$ and $I$ (see \S\ref{msec}) and in \S\ref{slp1} we present generating sets and the potential bases of these modules. Finally, in \S\ref{further} we present further results toward the solution of the infinite system (that is studied in \cite{DL5}), that lead to the \cite{GM}-basis of $\mathcal{S}(L(p, 1))$.

\section{Preliminaries}\label{prel}

\subsection{Mixed Links in $S^3$}

We consider ST to be the complement of a solid torus in $S^3$ and knots in ST are represented by \textit{mixed links} in $S^{3}$. Mixed links consist of two parts, the unknotted fixed part $\widehat{I}$ that represents the complementary solid torus in $S^3$ and the moving part $L$ that links with $\widehat{I}$. A \textit{mixed link diagram} is a diagram $\widehat{I}\cup \widetilde{L}$ of $\widehat{I}\cup L$ on the plane of $\widehat{I}$, where this plane is equipped with the top-to-bottom direction of $I$ (see top left hand side of Figure~\ref{bmov}). For more details on mixed links the reader is referred to \cite{LR1, LR2, DL1} and references therein.

\smallbreak

The lens spaces $L(p, 1)$ can be obtained from $S^3$ by surgery on the unknot with integer surgery coefficient $p$. Surgery along the unknot can be realized by considering first the complementary solid torus and then attaching to it a solid torus according to some homeomorphism on the boundary. Thus, isotopy in $L(p, 1)$ can be viewed as isotopy in ST together with the band moves in $S^3$, which reflect the surgery description of $L(p, 1)$. Moreover, in \cite{DL1} it is shown that in order to describe isotopy for knots and links in a c.c.o. $3$-manifold, it suffices to consider only the type $\alpha$ band moves (for an illustration see top of Figure~\ref{bmov}) and thus, isotopy between oriented links in $L(p, 1)$ is reflected in $S^3$ by means of the following result (cf. Thm.~5.8 \cite{LR1}, Thm.~6 \cite{DL1} ):

\smallbreak

{\it
Two oriented links in $L(p, 1)$ are isotopic if and only if two corresponding mixed link diagrams of theirs differ by isotopy in {\rm ST} together with a finite sequence of the type $\alpha$ band moves.
}

\subsection{Mixed braids and braid equivalence for knots and links in $L(p,1)$}

By the Alexander theorem for knots and links in the solid torus (cf. Thm.~1 \cite{La2}), a mixed link diagram $\widehat{I}\cup \widetilde{L}$ of $\widehat{I}\cup L$ may be turned into a \textit{mixed braid} $I\cup \beta$ with isotopic closure. This is a braid in $S^{3}$ where, without loss of generality, its first strand represents $\widehat{I}$, the fixed part, and the other strands, $\beta$, represent the moving part $L$. The subbraid $\beta$ is called the \textit{moving part} of $I\cup \beta$ (see bottom left hand side of Figure~\ref{bmov}). Then, in order to translate isotopy for links in $L(p,1)$ into braid equivalence, we first perform the technique of {\it standard parting} introduced in \cite{LR2} in order to separate the moving strands from the fixed strand that represents the lens spaces $L(p,1)$. This can be realized by pulling each pair of corresponding moving strands to the right and {\it over\/} or {\it under\/} the fixed strand that lies on their right. Then, we define a {\it braid band move} to be a move between mixed braids, which is a band move between their closures. It starts with a little band oriented downward, which, before sliding along a surgery strand, gets one twist {\it positive\/} or {\it negative\/} (see bottom of Figure~\ref{bmov}).

\begin{figure}
\begin{center}
\includegraphics[width=3.4in]{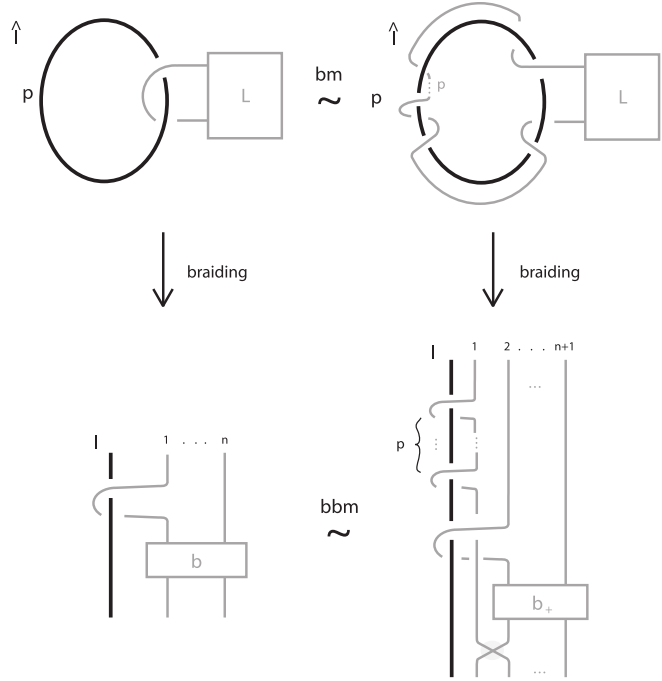}
\end{center}
\caption{Isotopy in $L(p,1)$ and the two types of braid band moves on mixed braids.}
\label{bmov}
\end{figure}

\smallbreak

The sets of braids related to ST form groups, which are in fact the Artin braid groups of type B, denoted $B_{1,n}$, with presentation:

\[ B_{1,n} = \left< \begin{array}{ll}  \begin{array}{l} t, \sigma_{1}, \ldots ,\sigma_{n-1}  \\ \end{array} & \left| \begin{array}{l}
\sigma_{1}t\sigma_{1}t=t\sigma_{1}t\sigma_{1} \ \   \\
 t\sigma_{i}=\sigma_{i}t, \quad{i>1}  \\
{\sigma_i}\sigma_{i+1}{\sigma_i}=\sigma_{i+1}{\sigma_i}\sigma_{i+1}, \quad{ 1 \leq i \leq n-2}   \\
 {\sigma_i}{\sigma_j}={\sigma_j}{\sigma_i}, \quad{|i-j|>1}  \\
\end{array} \right.  \end{array} \right>, \]

\noindent where the generators $\sigma _{i}$ and $t$ are illustrated in Figure~\ref{genh}(i).

Let now $\mathcal{L}$ denote the set of oriented knots and links in ST. Then, isotopy in $L(p,1)$ is then translated on the level of mixed braids by means of the following theorem:

\begin{thm}[Theorem~5, \cite{LR2}] \label{markov}
 Let $L_{1} ,L_{2}$ be two oriented links in $L(p,1)$ and let $I\cup \beta_{1} ,{\rm \; }I\cup \beta_{2}$ be two corresponding mixed braids in $S^{3}$. Then $L_{1}$ is isotopic to $L_{2}$ in $L(p,1)$ if and only if $I\cup \beta_{1}$ is equivalent to $I\cup \beta_{2}$ in $\mathcal{B}$ by the following moves:
\[ \begin{array}{clll}
(i)  & Conjugation:         & \alpha \sim \beta^{-1} \alpha \beta, & {\rm if}\ \alpha ,\beta \in B_{1,n}. \\
(ii) & Stabilization\ moves: &  \alpha \sim \alpha \sigma_{n}^{\pm 1} \in B_{1,n+1}, & {\rm if}\ \alpha \in B_{1,n}. \\
(iii) & Loop\ conjugation: & \alpha \sim t^{\pm 1} \alpha t^{\mp 1}, & {\rm if}\ \alpha \in B_{1,n}. \\
(iv) & Braid\ band\ moves: & \alpha \sim {t}^p \alpha_+ \sigma_1^{\pm 1}, & a_+\in B_{1, n+1},
\end{array} \]

\noindent where $\alpha_+$ is the word $\alpha$ with all indices shifted by +1. Note that moves (i), (ii) and (iii) correspond to link isotopy in {\rm ST}.
\end{thm}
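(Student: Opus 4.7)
The strategy is to reduce this braid-equivalence theorem to the mixed-link isotopy theorem quoted just above, by combining the Alexander theorem for mixed braids (Thm.~1 of \cite{La2}) with two independent ingredients: a Markov-type theorem for mixed braids in ST, which accounts for moves (i)--(iii), and a careful translation of type $\alpha$ band moves on mixed links into braid band moves of the form in (iv).

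First I would invoke the mixed-link isotopy theorem: two oriented links in $L(p,1)$ are isotopic if and only if their mixed link diagrams are related by a finite sequence of ST isotopies and type $\alpha$ band moves. Applying the mixed Alexander theorem, one represents each stage of such an isotopy by a mixed braid, so that the entire problem translates to mixed-braid equivalence under (a) moves that come from ST isotopy of closures, and (b) moves that come from type $\alpha$ band moves.

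For (a), a Markov-type theorem for mixed braids in ST is needed, and the argument is the classical Markov argument adapted to the mixed-braid setting: use Reidemeister moves on the moving part and track how strands may pass over/under the fixed strand $\widehat{I}$. The usual Reidemeister II/III reductions and the standard Markov step translate to conjugation (i) and stabilization (ii), while strands that wind around $\widehat{I}$ are absorbed by conjugation by the loop generator $t$, producing move (iii). This is in essence Thm.~3 of \cite{La1}, and it suffices to handle link isotopy in ST.

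For (b), the hardest step, I would start with a mixed braid in standard-parted form and perform a single type $\alpha$ band move whose band starts oriented downward. The band carries one $\pm 1$ twist before being slid along the surgery strand, and it then wraps $p$ times around that strand because $L(p,1)$ is obtained by $(p,1)$-framed surgery on the unknot. After re-braiding, the effect on a word $\alpha\in B_{1,n}$ is to insert a new strand (yielding $\alpha_+\in B_{1,n+1}$), to prepend $t^{p}$ for the $p$ meridional loops around the surgery axis, and to append $\sigma_1^{\pm 1}$ for the initial twist, producing precisely the move $\alpha\sim t^{p}\alpha_+\sigma_1^{\pm 1}$ of (iv). The main obstacle will be the combinatorial bookkeeping inside this translation: verifying that the exponent is exactly $p$ (and not $-p$ or $p\pm 1$) under the chosen orientation and framing conventions, confirming that both signs of $\sigma_1$ correspond to the two legitimate initial twists of the band, and checking that any auxiliary repartings or local isotopies required to put the band into the standard position are already covered by moves (i)--(iii). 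Once this is settled, the converse direction is immediate, since each of (i)--(iv) clearly induces an isotopy of closures in $L(p,1)$.
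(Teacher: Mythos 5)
This theorem is not proved in the paper at all: it is quoted verbatim as Theorem~5 of \cite{LR2} (building on \cite{LR1}), so there is no in-paper argument to compare against. Your outline is a faithful high-level reconstruction of the strategy actually used in \cite{LR1, LR2}: reduce isotopy in $L(p,1)$ to ST-isotopy plus type $\alpha$ band moves on mixed link diagrams, braid everything via the mixed Alexander theorem, handle the ST-isotopy part by a Markov-type theorem giving moves (i)--(iii), and algebraize the band move to get (iv). Two caveats on where the real work lies, both of which you partly flag but understate. First, the ``Markov-type theorem for mixed braids in ST'' is not just the classical Markov argument adapted; in \cite{LR1, LR2} it is obtained through the one-move ($L$-move) Markov theorem for $3$-manifolds and the technique of \emph{standard parting and combing} of mixed braids, which is what separates the moving strands from the fixed strand and makes loop conjugation (iii) appear as a distinct move rather than being absorbed into (i). Second, the algebraization of the band move requires normalizing \emph{where} the band is attached: one must show that the resulting relation can always be brought to the form $t^{p}\alpha_{+}\sigma_{1}^{\pm1}$ with the new strand in the first moving position, and that the choice of the point where the two components are reconnected is immaterial modulo moves (i)--(iii); this is a separate lemma in \cite{LR2} (alluded to in the paper right after the theorem statement), not mere bookkeeping. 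With those two ingredients supplied, your sketch matches the published proof; as a standalone argument it is an outline rather than a proof, but for a statement the paper itself imports without proof that is an appropriate level of detail.
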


\begin{nt}\rm
We denote a braid band move by bbm and, specifically, the result of a positive or negative braid band move performed on the $i^{th}$-moving strand of a mixed braid $\beta$ by $bbm_{\pm i}(\beta)$.
\end{nt}

Note also that in \cite{LR2} it was shown that the choice of the position of connecting the two components after the performance of a bbm is arbitrary.

\section{The HOMFLYPT skein module of ST via braids}\label{SolidTorus}

In \cite{La2} the most generic analogue of the HOMFLYPT polynomial, $X$, for links in the solid torus $\rm ST$ has been derived from the generalized Iwahori--Hecke algebras of type $\rm B$, $\textrm{H}_{1,n}$, via a unique Markov trace constructed on them. This algebra was defined as the quotient of ${\mathbb C}\left[q^{\pm 1} \right]B_{1,n}$ over the quadratic relations ${g_{i}^2=(q-1)g_{i}+q}$. Namely:

\begin{equation*}
\textrm{H}_{1,n}(q)= \frac{{\mathbb C}\left[q^{\pm 1} \right]B_{1,n}}{ \langle \sigma_i^2 -\left(q-1\right)\sigma_i-q \rangle}.
\end{equation*}

It is also shown that the following sets form linear bases for ${\rm H}_{1,n}(q)$ (\cite[Proposition~1 \& Theorem~1]{La2}):

\[
\begin{array}{llll}
 (i) & \Sigma_{n} & = & \{t_{i_{1} } ^{k_{1} } \ldots t_{i_{r}}^{k_{r} } \cdot \sigma \} ,\ {\rm where}\ 0\le i_{1} <\ldots <i_{r} \le n-1,\\
 (ii) & \Sigma^{\prime} _{n} & = & \{ {t^{\prime}_{i_1}}^{k_{1}} \ldots {t^{\prime}_{i_r}}^{k_{r}} \cdot \sigma \} ,\ {\rm where}\ 0\le i_{1} < \ldots <i_{r} \le n-1, \\
\end{array}
\]
\noindent where $k_{1}, \ldots ,k_{r} \in {\mathbb Z}$, $t_0^{\prime}\ =\ t_0\ :=\ t, \quad t_i^{\prime}\ =\ g_i\ldots g_1tg_1^{-1}\ldots g_i^{-1} \quad {\rm and}\quad t_i\ =\ g_i\ldots g_1tg_1\ldots g_i$ are the `looping elements' in ${\rm H}_{1, n}(q)$ (see Figure~\ref{genh}(ii)) and $\sigma$ a basic element in the Iwahori--Hecke algebra of type A, ${\rm H}_{n}(q)$, for example in the form of the elements in the set \cite{Jo}:

$$ S_n =\left\{(g_{i_{1} }g_{i_{1}-1}\ldots g_{i_{1}-k_{1}})(g_{i_{2} }g_{i_{2}-1 }\ldots g_{i_{2}-k_{2}})\ldots (g_{i_{p} }g_{i_{p}-1 }\ldots g_{i_{p}-k_{p}})\right\}, $$

\noindent for $1\le i_{1}<\ldots <i_{p} \le n-1{\rm \; }$. In \cite{La2} the bases $\Sigma^{\prime}_{n}$ are used for constructing a Markov trace on $\mathcal{H}:=\bigcup_{n=1}^{\infty}{\rm H}_{1, n}$, and using this trace, a universal HOMFLYPT-type invariant for oriented links in ST was constructed.

\begin{figure}
\begin{center}
\includegraphics[width=5.1in]{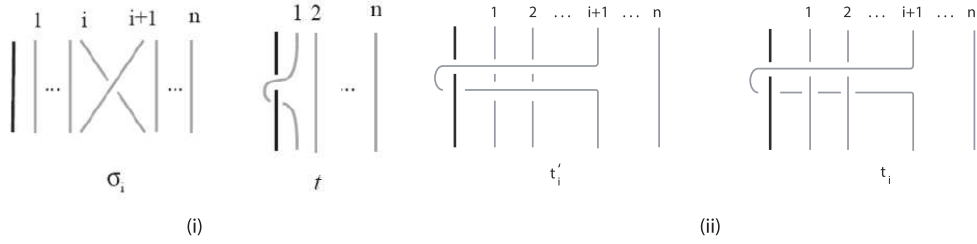}
\end{center}
\caption{The generators of $B_{1, n}$ and the `looping' elements $t^{\prime}_{i}$ and $t_{i}$.}
\label{genh}
\end{figure}

\begin{thm}{\cite[Theorem~6 \& Definition~1]{La2}} \label{tr}
Given $z, s_{k}$ with $k\in {\mathbb Z}$ specified elements in $R={\mathbb C}\left[q^{\pm 1} \right]$, there exists a unique linear Markov trace function on $\mathcal{H}$:

\begin{equation*}
{\rm tr}:\mathcal{H}  \to R\left(z,s_{k} \right),\ k\in {\mathbb Z}
\end{equation*}

\noindent determined by the rules:

\[
\begin{array}{lllll}
(1) & {\rm tr}(ab) & = & {\rm tr}(ba) & \quad {\rm for}\ a,b \in {\rm H}_{1,n}(q) \\
(2) & {\rm tr}(1) & = & 1 & \quad {\rm for\ all}\ {\rm H}_{1,n}(q) \\
(3) & {\rm tr}(ag_{n}) & = & z{\rm tr}(a) & \quad {\rm for}\ a \in {\rm H}_{1,n}(q) \\
(4) & {\rm tr}(a{t^{\prime}_{n}}^{k}) & = & s_{k}{\rm tr}(a) & \quad {\rm for}\ a \in {\rm H}_{1,n}(q),\ k \in {\mathbb Z} \\
\end{array}
\]

\bigbreak

\noindent Then, the function $X:\mathcal{L}$ $\rightarrow R(z,s_{k})$

\begin{equation*}
X_{\widehat{\alpha}} = \Delta^{n-1}\cdot \left(\sqrt{\lambda } \right)^{e}
{\rm tr}\left(\pi \left(\alpha \right)\right),
\end{equation*}

\noindent is an invariant of oriented links in {\rm ST}, where $\Delta:=-\frac{1-\lambda q}{\sqrt{\lambda } \left(1-q\right)}$, $\lambda := \frac{z+1-q}{qz}$, $\alpha \in B_{1,n}$ is a word in the $\sigma _{i}$'s and $t^{\prime}_{i} $'s, $\widehat{\alpha}$ is the closure of $\alpha$, $e$ is the exponent sum of the $\sigma _{i}$'s in $\alpha $, $\pi$ the canonical map of $B_{1,n}$ on ${\rm H}_{1,n}(q)$, such that $t\mapsto t$ and $\sigma _{i} \mapsto g_{i}$.
\end{thm}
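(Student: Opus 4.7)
The plan is to split the argument into two independent parts: first the construction and uniqueness of the Markov trace ${\rm tr}$ on $\mathcal{H}$, then the verification that the function $X$ built from ${\rm tr}$ is invariant under the braid-equivalence moves of Theorem~\ref{markov}(i)--(iii), which exactly describe link isotopy in ${\rm ST}$.

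For the trace, I would proceed inductively on $n$. Uniqueness is the easier direction: given an arbitrary word in ${\rm H}_{1,n}(q)$, one rewrites it on the basis $\Sigma^{\prime}_n$ and then reduces each basis element to a scalar multiple of a basis element of ${\rm H}_{1,n-1}(q)$. The key manoeuvre is that every basis element can be conjugated (using rule~(1)) so that it either terminates in $g_{n-1}^{\pm 1}$, where rule~(3) together with the quadratic relation applies, or it carries a trailing factor ${t^{\prime}_{n-1}}^{k}$ on which rule~(4) returns $s_k$. Iterating this descent fixes the value of ${\rm tr}$ on every element of $\mathcal{H}$.

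Existence is the main obstacle. One defines ${\rm tr}$ explicitly on $\Sigma^{\prime}_n$ by the prescribed reduction and must then check that the resulting linear functional respects every defining relation of the generalized Hecke algebra of type~B: the quadratic relations $g_i^2=(q-1)g_i+q$, the type-A braid relations, the commutations $tg_i=g_it$ for $i>1$, and, critically, the mixed relation $g_1tg_1t=tg_1tg_1$. The first three checks are routine, but the last one couples to rules~(3) and (4) through the conjugate looping generators $t^{\prime}_i=g_i\cdots g_1 t g_1^{-1}\cdots g_i^{-1}$ and $t_i=g_i\cdots g_1 t g_1\cdots g_i$, forcing delicate book-keeping on how these elements pass through the $g_i$'s; this is where the bulk of Lambropoulou's argument in~\cite{La2} lies. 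The payoff is a well-defined ${\rm tr}$ whose values on the basis are determined by the infinite family of parameters $\{s_k\}_{k\in\mathbb{Z}}$ and $z$.

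For invariance of $X$, conjugation invariance in $B_{1,n}$ (including loop conjugation by $t^{\pm 1}$) is immediate from rule~(1) via $\pi$. For positive stabilization one computes
\[
X_{\widehat{\alpha\sigma_n}}=\Delta^{n}(\sqrt{\lambda})^{e+1}\,{\rm tr}\bigl(\pi(\alpha)g_n\bigr)=\Delta\sqrt{\lambda}\,z\cdot X_{\widehat{\alpha}},
\]
which forces $\Delta\sqrt{\lambda}\,z=1$. For negative stabilization, substituting $g_n^{-1}=q^{-1}g_n+(q^{-1}-1)$ (from the quadratic relation) and applying rule~(3) yields a second linear equation in $\Delta,\lambda,q,z$; solving the resulting pair produces precisely the stated values $\lambda=(z+1-q)/(qz)$ and $\Delta=-(1-\lambda q)/(\sqrt{\lambda}(1-q))$. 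Once these constants are fixed, invariance under all three moves of Theorem~\ref{markov}(i)--(iii) is automatic, so $X$ descends to a well-defined invariant of oriented links in ${\rm ST}$.
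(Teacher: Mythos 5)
This theorem is quoted verbatim from \cite{La2} (Theorem~6 and Definition~1 there): the present paper offers no proof of it, so there is no internal argument to compare yours against. Judged on its own, your sketch is a faithful outline of Lambropoulou's construction: the trace is built inductively up the tower ${\rm H}_{1,1}(q)\subset {\rm H}_{1,2}(q)\subset\cdots$, uniqueness follows by reducing each element of the basis $\Sigma^{\prime}_n$ to ${\rm H}_{1,n-1}(q)$ via rules (1), (3), (4), and the constants are exactly those forced by stabilization --- your two conditions (that $\Delta\sqrt{\lambda}\,z=1$ from $\sigma_n$ and $\Delta\,(\sqrt{\lambda})^{-1}\tfrac{z+1-q}{q}=1$ from $\sigma_n^{-1}$, the latter via $g_n^{-1}=q^{-1}g_n+(q^{-1}-1)$) do solve to $\lambda=\tfrac{z+1-q}{qz}$ and $\Delta=-\tfrac{1-\lambda q}{\sqrt{\lambda}(1-q)}$. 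One correction to your existence step: since ${\rm tr}$ is prescribed on the linear basis $\Sigma^{\prime}_n$, there is nothing to check against the defining relations of the algebra --- a functional given on a basis is automatically well defined and linear. What actually must be proved is that the inductively defined functional satisfies rule (1), the conjugation property, on each ${\rm H}_{1,n+1}(q)$; this uses the decomposition of ${\rm H}_{1,n+1}(q)$ into ${\rm H}_{1,n}(q)$, ${\rm H}_{1,n}(q)\,g_n\,{\rm H}_{1,n}(q)$ and the pieces ${\rm H}_{1,n}(q)\,{t^{\prime}_n}^{k}\,{\rm H}_{1,n}(q)$, together with a case analysis of how $g_n$ and ${t^{\prime}_n}^{k}$ pass through elements of ${\rm H}_{1,n}(q)$ --- which is where the delicate book-keeping you allude to genuinely lives. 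With that reframing, your outline matches the cited source.
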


\begin{remark}\rm
Note that the use of the looping elements $t^{\prime}$'s enable the trace to be defined by just extending by rule (4) the three rules of the Ocneanu trace on the algebras ${\rm H}_n(q)$ (\cite{Jo}).
\end{remark}

\bigbreak

In the braid setting of \cite{La2}, the elements of $\mathcal{S}({\rm ST})$ correspond bijectively to the elements of the following set $\Lambda^{\prime}$:

\begin{equation}\label{Lpr}
\Lambda^{\prime}=\{ {t^{k_0}}{t^{\prime}_1}^{k_1} \ldots
{t^{\prime}_n}^{k_n}, \ k_i \in \mathbb{Z}\setminus\{0\}, \ k_i \leq k_{i+1}\ \forall i,\ n\in \mathbb{N} \}.
\end{equation}

\noindent As explained in \cite{La2, DL2}, the set $\Lambda^{\prime}$ forms a basis of $\mathcal{S}({\rm ST})$ in terms of braids (see also \cite{HK, Tu}). Note that $\Lambda^{\prime}$ is a subset of $\mathcal{H}$ and, in particular, $\Lambda^{\prime}$ is a subset of $\Sigma^{\prime}=\bigcup_n\Sigma^{\prime}_n$. Note also that in contrast to elements in $\Sigma^{\prime}$, the elements in $\Lambda^{\prime}$ have no gaps in the indices, the exponents are ordered and there are no `braiding tails'. 

\begin{remark}\rm
The Lambropoulou invariant $X$ recovers $\mathcal{S}({\rm ST})$. Indeed, it gives distinct values to distinct elements of $\Lambda^{\prime}$, since ${\rm tr}(t^{k_0}{t^{\prime}_1}^{k_1} \ldots {t^{\prime}_n}^{k_n})=s_{k_n}\ldots s_{k_1}s_{k_0}$.
\end{remark}

\subsection{A different basis for $\mathcal{S}({\rm ST})$}

In \cite{DL2}, a different basis $\Lambda$ for $\mathcal{S}({\rm ST})$ is presented, which is crucial toward the computation of $\mathcal{S}\left(L(p,1)\right)$ and which is described in Eq.~\ref{basis} in open braid form (for an illustration see Figure~\ref{basel}). In particular we have the following:

\begin{thm}{\cite[Theorem~2]{DL2}}\label{newbasis}
The following set is a $\mathbb{C}[q^{\pm1}, z^{\pm1}]$-basis for $\mathcal{S}({\rm ST})$:
\begin{equation}\label{basis}
\Lambda=\{t^{k_0}t_1^{k_1}\ldots t_n^{k_n},\ k_i \in \mathbb{Z}\setminus\{0\},\ k_i \leq k_{i+1}\ \forall i,\ n \in \mathbb{N} \}.
\end{equation}
\end{thm}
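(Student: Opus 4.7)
The plan is to deduce the theorem from the Turaev--Kidwell--Hoste basis $\Lambda'$ (Equation~\ref{Lpr}, Theorem~\ref{turaev}) by exhibiting an invertible triangular change-of-basis between $\Lambda$ and $\Lambda'$. Since $\Lambda'$ is already a $\mathbb{Z}[q^{\pm 1},z^{\pm 1}]$-basis of $\mathcal{S}({\rm ST})$, it suffices to express every monomial of $\Lambda$ as a linear combination of $\Lambda'$-monomials such that, with respect to a suitable total ordering on monomials, the resulting transition matrix is lower triangular with units on its diagonal. Linear independence and spanning of $\Lambda$ then follow simultaneously from invertibility of this matrix.

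The core computation is a single-loop conversion formula between $t_i$ and $t_i'$. Starting from the recursions $t_i = g_i t_{i-1} g_i$ and $t_i' = g_i t_{i-1}' g_i^{-1}$ with $t_0 = t_0' = t$, and using the Hecke identity $g_i = g_i^{-1} + (q-1)$, a direct calculation gives the base case
\begin{equation*}
t_1 \;=\; q\, t_1' \;+\; (q-1)\, g_1 t.
\end{equation*}
Iterating by induction on $i$, and then, for each fixed $i$, by induction on $|k|$, one extracts an expansion of the form
\begin{equation*}
t_i^{k} \;=\; q^{k}\, {t_i'}^{k} \;+\; (\text{strictly lower-order terms}),
\end{equation*}
where the lower-order terms are $\mathbb{Z}[q^{\pm 1}]$-combinations of expressions involving the looping generators $t_j'$ with $j \le i$ accompanied by braiding tails in ${\rm H}_n(q)$. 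The leading coefficient $q^{k}$ is a unit of $\mathbb{Z}[q^{\pm 1},z^{\pm 1}]$.

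The second step is to promote this single-loop formula to arbitrary monomials of $\Lambda$. Multiplying together the expansions of $t_i^{k_i}$ for $i = 0,1,\dots,n$ and reducing the resulting products back into canonical $\Lambda'$-form using the defining relations of ${\rm H}_{1,n}(q)$ and the skein relations in $\mathcal{S}({\rm ST})$, one arranges that the leading term of $t^{k_0} t_1^{k_1} \cdots t_n^{k_n}$ is precisely
\begin{equation*}
q^{\,k_1 + k_2 + \cdots + k_n}\, t^{k_0}\, {t_1'}^{k_1} \cdots {t_n'}^{k_n},
\end{equation*}
with all remaining summands being strictly smaller in the ordering. Ranging over all monomials of $\Lambda$ indexed by the ordering of \S\ref{ordsec}, the transition matrix from $\Lambda$ to $\Lambda'$ is lower triangular with diagonal entries $q^{k_1+\cdots+k_n} \in \mathbb{Z}[q^{\pm 1},z^{\pm 1}]^{\times}$, hence invertible. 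Since $\Lambda'$ is a basis by Theorem~\ref{turaev}, so is $\Lambda$.

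The main obstacle will be the bookkeeping for the ordering. One must fix a single total order on monomials -- the ordering on $\mathcal{S}({\rm ST})$ that is set up in \S\ref{ordsec} -- which is simultaneously compatible with (a) the reduction of arbitrary products in $\mathrm{H}_{1,n}(q)$ to canonical $\Lambda$- and $\Lambda'$-form, (b) the conversion $t_i^{k_i} \mapsto q^{k_i} {t_i'}^{k_i} + \cdots$, and (c) the absorption of braiding tails by trace/skein moves. Each application of $g_i^2 = (q-1)g_i + q$ produces a spurious $g_i$-tail that must be carried past all later loops and re-expanded, and one must check that no such chain of rewrites ever produces a monomial equal to or greater than the intended leading term. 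Formalising this — so that the "lower-order" label in the expansion really is lower in the chosen ordering — is the delicate technical heart of the argument and the reason the ordering of \S\ref{ordsec} is constructed before the basis change is proved.
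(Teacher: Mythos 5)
Your proposal follows essentially the same route as the paper's source [DL2], which is summarized in \S\ref{turtolam}: an invertible lower-triangular change of basis between $\Lambda$ and the Turaev basis $\Lambda^{\prime}$ with respect to the ordering of \S\ref{ordsec}, with a power of $q$ on the diagonal, and with the gaps in indices, the braiding tails and the exponent reorderings all pushed into strictly lower-order terms (Theorems~\ref{convert}, \ref{tails}, \ref{exp} and Eq.~(\ref{gaps})). The only slip is the diagonal entry: since $t_i$ and $t_i^{\prime}$ differ by $i$ conjugating generators, the leading coefficient of $t^{k_0}t_1^{k_1}\cdots t_n^{k_n}$ is $q^{\sum_{i=1}^{n} i\,k_i}$ as in Eq.~(\ref{convert2}), not $q^{\sum_{i} k_i}$ --- still a unit, so the argument is unaffected.
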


The importance of the new basis $\Lambda$ of $\mathcal{S}({\rm ST})$ lies in the simplicity of the algebraic expression of a braid band move, which extends the link isotopy in ST to link isotopy in $L(p,1)$ and this fact was our motivation for establishing this new basis $\Lambda$. Note that comparing the set $\Lambda$ with the set $\Sigma=\bigcup_n\Sigma_n$, we observe that in $\Lambda$ there are no gaps in the indices of the $t_i$'s and the exponents are in decreasing order. Also, there are no `braiding tails' in the words in $\Lambda$.

\begin{figure}
\begin{center}
\includegraphics[width=1.6in]{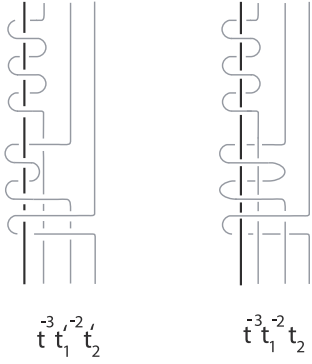}
\end{center}
\caption{Elements in the two different bases of $\mathcal{S}({\rm ST})$.}
\label{basel}
\end{figure}

\subsection{An ordering in the bases of $\mathcal{S}({\rm ST})$}\label{ordsec}

We now define an ordering relation in the sets $\Sigma$ and $\Sigma^{\prime}$, which passes to their respective subsets $\Lambda$ and $\Lambda^{\prime}$ and that first appeared in \cite{DL2}. This ordering relation plays a crucial role to what will follow. For that we need the notion of the {\it index} of a word $w$ in any of these sets, denoted $ind(w)$. In $\Lambda^{\prime}$ or $\Lambda$ $ind(w)$ is defined to be the highest index of the $t_i^{\prime}$'s, resp. of the $t_i$'s in $w$. Similarly, in $\Sigma^{\prime}$ or $\Sigma$, $ind(w)$ is defined as above by ignoring possible gaps in the indices of the looping generators and by ignoring the braiding parts in the algebras $\textrm{H}_{n}(q)$. Moreover, the index of a monomial in $\textrm{H}_{n}(q)$ is equal to $0$.

\begin{defn}{\cite[Definition~2]{DL2}} \label{order}
\rm
Let $w={t^{\prime}_{i_1}}^{k_1}\ldots {t^{\prime}_{i_{\mu}}}^{k_{\mu}}\cdot \beta_1$ and $u={t^{\prime}_{j_1}}^{\lambda_1}\ldots {t^{\prime}_{j_{\nu}}}^{\lambda_{\nu}}\cdot \beta_2$ in $\Sigma^{\prime}$, where $k_t , \lambda_s \in \mathbb{Z}$ for all $t,s$ and $\beta_1, \beta_2 \in H_n(q)$. Then, we define the following ordering in $\Sigma^{\prime}$:

\smallbreak

\begin{itemize}
\item[(a)] If $\sum_{i=0}^{\mu}k_i < \sum_{i=0}^{\nu}\lambda_i$, then $w<u$.

\vspace{.1in}

\item[(b)] If $\sum_{i=0}^{\mu}k_i = \sum_{i=0}^{\nu}\lambda_i$, then:

\vspace{.1in}

\noindent  (i) if $ind(w)<ind(u)$, then $w<u$,

\vspace{.1in}

\noindent  (ii) if $ind(w)=ind(u)$, then:

\vspace{.1in}

\noindent \ \ \ \ ($\alpha$) if $i_1=j_1, \ldots , i_{s-1}=j_{s-1}, i_{s}<j_{s}$, then $w>u$,

\vspace{.1in}

\noindent \ \ \  ($\beta$) if $i_t=j_t$ for all $t$ and $k_{\mu}=\lambda_{\mu}, k_{\mu-1}=\lambda_{\mu-1}, \ldots, k_{i+1}=\lambda_{i+1}, |k_i|<|\lambda_i|$, then $w<u$,

\vspace{.1in}

\noindent \ \ \  ($\gamma$) if $i_t=j_t$ for all $t$ and $k_{\mu}=\lambda_{\mu}, k_{\mu-1}=\lambda_{\mu-1}, \ldots, k_{i+1}=\lambda_{i+1}, |k_i|=|\lambda_i|$ and $k_i>\lambda_i$, then $w<u$,

\vspace{.1in}

\noindent \ \ \ \ ($\delta$) if $i_t=j_t\ \forall t$ and $k_i=\lambda_i$, $\forall i$, then $w=u$.

\end{itemize}

The ordering in the set $\Sigma$ is defined as in $\Sigma^{\prime}$, where $t_i^{\prime}$'s are replaced by $t_i$'s.
\end{defn}

\begin{nt}\label{nt} \rm
We set $\tau_{i,i+m}^{k_{i,i+m}}:=t_i^{k_i}\ldots t^{k_{i+m}}_{i+m}$ and ${\tau^{\prime}}_{i,i+m}^{k_{i,i+m}}:={t^{\prime}}_i^{k_i}\ldots {t^{\prime}}^{k_{i+m}}_{i+m}$, for $m\in \mathbb{N}$, $k_j\neq 0$ for all $j$.
\end{nt}

\bigbreak

The \textit{subsets of level $k$}, $\Lambda_{(k)}$ and $\Lambda^{\prime}_{(k)}$, of $\Lambda$ and $\Lambda^{\prime}$ respectively ({\cite[Definition~3]{DL2}}), are defined to be the sets:

\begin{equation}\label{levk}
\begin{array}{l}
\Lambda_{(k)}:=\{t_0^{k_0}t_1^{k_1}\ldots t_{m}^{k_m} | \sum_{i=0}^{m}{k_i}=k,\ k_i \in \mathbb{Z}\setminus\{0\},\  k_i \leq k_{i+1}\ \forall i \}\\
\\
\Lambda^{\prime}_{(k)}:=\{{t^{\prime}_0}^{k_0}{t^{\prime}_1}^{k_1}\ldots {t^{\prime}_{m}}^{k_m} | \sum_{i=0}^{m}{k_i}=k,\ k_i \in \mathbb{Z}\setminus\{0\},\  k_i \leq k_{i+1}\ \forall i \}
\end{array}
\end{equation}

\noindent In \cite{DL2} it was shown that the sets $\Lambda_{(k)}$ and $\Lambda^{\prime}_{(k)}$ are totally ordered and well ordered for all $k$ (\cite[Propositions~1 \& 2]{DL2}). Note that in \cite{DLP} the exponents in the monomials of $\Lambda$ are in decreasing order, while here the exponents are considered in increasing order, which is totally symmetric. 

\bigbreak

We finally define the set $\Lambda^{aug}$, which augments the basis $\Lambda$ and its subset of level $k$, and we also introduce the notion of \textit{homologous words}.

\begin{defn}\rm
We define the set:
\begin{equation}\label{lamaug}
\Lambda^{aug}\ :=\{t_0^{k_0}t_1^{k_1}\ldots t_{n}^{k_n},\ k_i \in \mathbb{Z}\backslash \{0\}\}.
\end{equation}
\noindent and the {\it subset of level} $k$, $\Lambda^{aug}_{(k)}$, of $\Lambda^{aug}$:

\begin{equation}\label{lamaug2}
\Lambda^{aug}_{(k)}:=\{t_0^{k_0}t_1^{k_1}\ldots t_{m}^{k_m} | \sum_{i=0}^{m}{k_i}=k,\ k_i \in \mathbb{Z}\backslash \{0\}\}
\end{equation}
\end{defn}

\begin{defn}\label{homw} \rm
We shall say that two words $w^{\prime}\in \Lambda^{\prime}$ and $w\in \Lambda$ are {\it homologous}, denoted $w^{\prime}\sim w$, if $w$ is
obtained from $w^{\prime}$ by turning $t^{\prime}_i$ into $t_i$ for all $i$.
\end{defn}

\subsection{Relating $\Lambda^{\prime}$ to $\Lambda$}\label{turtolam}

We now present results from \cite{DL2} used in order to relate the sets $\Lambda^{\prime}$ and $\Lambda$ via a lower triangular matrix with invertible elements in the diagonal. We start by expressing elements in $\Lambda^{\prime}$ to to expressions containing the $t_i$'s. We have that:

\begin{thm}[Theorem 7, \cite{DL2}]\label{convert}
The following relations hold in ${\rm H}_{1,n}(q)$ for $k \in \mathbb{Z}$:
\[
\begin{array}{lcl}
t^{k_0}{t_1^{\prime}}^{k_1} \ldots {t_m^{\prime}}^{k_m} & = & q^{-\underset{n=1}{\overset{m}{\sum}}\, {nk_n}}\cdot\ t^{k_0}t_1^{k_1}\ldots t_m^{k_m} \ + \ \underset{i}{\sum}\, {f_i(q)\cdot t^{k_0}t_1^{k_1}\ldots t_m^{k_m}\cdot w_i} \ +\\
&&\\
& + & \underset{j}{\sum}\, {g_j(q)\tau_j \cdot u_j},
\end{array}
\]
\noindent where $w_i, u_j \in {\rm H}_{m+1}(q), \forall i$, $\tau_j \in \Sigma_n$, such that $\tau_j < t^{k_0}t_1^{k_1}\ldots t_m^{k_m}, \forall j$.
\end{thm}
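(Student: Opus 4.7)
The plan is to proceed by induction on $m$, the number of looping generators, using two algebraic inputs. First, the telescoping identity
\[
(t'_j)^{k_j} \;=\; g_j g_{j-1}\cdots g_1\, t^{k_j}\, g_1^{-1} g_2^{-1}\cdots g_j^{-1},
\]
which holds for all $k_j \in \Z$ because the inner $g_i^{-1} g_i$ pairs cancel when the defining expression for $t'_j$ is multiplied with itself. Second, the quadratic relation rewritten as $g_i^{-1} = q^{-1} g_i + (q^{-1}-1)$, which converts each inverse generator into a $q^{-1}$-multiple of the positive generator plus a strictly shorter correction. When the tail of $(t'_j)^{k_j}$ is expanded, its $j$ inverse generators each contribute a factor of $q^{-1}$ to the leading piece, accounting for the overall factor $q^{-\sum_{n=1}^{m} n k_n}$ claimed in the theorem.

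For the base case $m=1$, I would telescope $(t'_1)^{k_1}$ to $g_1 t^{k_1} g_1^{-1}$, substitute $g_1^{-1} = q^{-1}g_1 + (q^{-1}-1)$, and identify $g_1 t^{k_1} g_1$ with $t_1^{k_1}$ modulo corrections in the two allowed classes; negative powers reduce to the positive case by taking inverses. For the inductive step, I write
\[
t^{k_0}(t'_1)^{k_1}\cdots(t'_m)^{k_m} \;=\; P \cdot (t'_m)^{k_m}, \qquad P \;:=\; t^{k_0}(t'_1)^{k_1}\cdots(t'_{m-1})^{k_{m-1}},
\]
apply the inductive hypothesis to $P$, and expand $(t'_m)^{k_m}$ via telescoping followed by the quadratic relation applied to each $g_i^{-1}$. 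The unique ``all-diagonal'' cross product then yields the main term $q^{-\sum_{n=1}^{m} n k_n}\, t^{k_0}t_1^{k_1}\cdots t_m^{k_m}$, once the junction $g_1^{-1}\cdots g_{m-1}^{-1} g_m g_{m-1}\cdots g_1$ between adjacent telescoped blocks is reduced, via braid relations and cancellations, to the desired $t_m^{k_m}$-shape modulo a trailing element of $H_{m+1}(q)$.

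The principal obstacle is to show that every non-leading cross product lands in one of the two allowed classes. Each application of $g_i^{-1} = q^{-1}g_i + (q^{-1}-1)$ on a non-leading branch produces a term with one fewer letter, which once commuted to its normal position either (i) absorbs into a trailing $H_{m+1}(q)$-factor (the $w_i$-class, preserving the leading $t$-monomial), or (ii) causes cancellation of an opposite pair of generators, strictly lowering the total exponent sum (strictly smaller by Definition~\ref{order}(a)), or (iii) forces a braid relation that strictly lowers the maximal $t_j$-index (strictly smaller by Definition~\ref{order}(b)(i)). The delicate bookkeeping is to verify that the braid-relation normalizations bringing intermediate expressions into $\Sigma_n$-form never break triangularity; this requires a careful case analysis organized by the distance between indices involved in each relation, and by whether a given conversion lies inside the ``body'' $t^{k_m}$ or in the flanking $g$-strings.
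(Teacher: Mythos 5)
Your overall strategy (telescope each ${t'_j}^{k_j}$, replace every $g_i^{-1}$ by $q^{-1}g_i+(q^{-1}-1)$, and induct on the order) is in the right spirit, but the bookkeeping of the leading coefficient contains a genuine error. Telescoping gives ${t'_j}^{k_j}=g_j\cdots g_1\,t^{k_j}\,g_1^{-1}\cdots g_j^{-1}$ with exactly $j$ inverse letters, so the mechanism you describe produces only a factor $q^{-j}$ from the $j$-th block, whereas the theorem requires $q^{-jk_j}$. The missing factor $q^{-j(k_j-1)}$ must come from the step you treat as free, namely ``identify $g_j\cdots g_1t^{k_j}g_1\cdots g_j$ with $t_j^{k_j}$ modulo corrections'': since $t_j$ is conjugated by \emph{positive} generators on both sides, its powers do not telescope, and one has for instance $t_1^2=(q-1)\,t\,t_1g_1+q\,g_1t^2g_1$, i.e. $g_1t^2g_1=q^{-1}t_1^2-q^{-1}(q-1)\,t\,t_1g_1$. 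The identification therefore carries the nontrivial coefficient $q^{-j(k_j-1)}$, and establishing it is itself the main inductive computation (it is precisely what rewriting relations such as Eq.~(\ref{sllem}) and Lemma~\ref{imp} encode). As written, your base case $m=1$ would output the coefficient $q^{-1}$ for ${t'_1}^{k_1}$ instead of the required $q^{-k_1}$.

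A second gap concerns your trichotomy for the non-leading branches (absorb into a tail, cancel a pair and lower the exponent sum, or lower the maximal index): it does not cover the bulk of the correction terms. The terms produced by Eq.~(\ref{sllem})-type rewritings, such as $t_{n-1}^{\,j}t_n^{\,k-j}$ for $1\le j\le k-1$, have the \emph{same} total exponent sum and the \emph{same} maximal index as the target monomial; they are comparable to it only through the finer clauses (b)(ii)($\alpha$)--($\gamma$) of Definition~\ref{order}, because exponent weight migrates toward smaller indices. Any triangularity argument has to be organized around this refined comparison, which is how the proof in \cite{DL2} actually proceeds: explicit inductive formulas for $t_n^k\sigma_n$ and $\sigma_nt_{n-1}^k\sigma_n$, followed by strong induction on the total order of Definition~\ref{order}. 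Without these two repairs the induction does not close.
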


Equivalently, the relation in Theorem~\ref{convert} can be written as:

\begin{equation}\label{convert2}
\begin{array}{lcl}
 t^{k_0}t_1^{k_1}\ldots t_m^{k_m} & = & q^{\underset{n=1}{\overset{m}{\sum}}\, {nk_n}}\cdot\ t^{k_0}{t_1^{\prime}}^{k_1} \ldots {t_m^{\prime}}^{k_m}
\ + \ \underset{i}{\sum}\, {f^{\prime}_i(q)\cdot t^{k_0}t_1^{k_1}\ldots t_m^{k_m}\cdot w_i} \ +\\
&&\\
& + & \underset{j}{\sum}\, {g^{\prime}_j(q)\tau_j \cdot u_j},
\end{array}
\end{equation}

When applying Theorem~\ref{convert} on an element in $\Lambda^{\prime}$, we obtain the homologous word, the homologous word followed by a braiding ``tail'', and a sum of lower order terms followed by braiding ``tails''. These elements belong to $\Sigma_n$ since they may have gaps in their indices, and we manage the gaps applying Theorem~8 in \cite{DL2}, namely:

\begin{equation}\label{gaps}
\Sigma_n \ni \tau\ \widehat{=}\ \underset{i}{\sum}\, f_{i}(q)\, \tau_i\cdot w_i\ :\ \tau_i\in \Lambda^{aug}\,\ w_i\in {\rm H}_{n}(q),\ \forall i,
\end{equation}

\noindent where $\widehat{=}$ denotes that conjugation is applied in this process.

\smallbreak

We now deal with the elements in $\Lambda^{aug}_{(k)}$ that are followed by a braiding ``tail'' $w$ in $H_n(q)$. More precisely we have:

\begin{thm}[Theorem~9, \cite{DL2}]\label{tails}
For an element in $\Lambda^{aug}_{(k)}$ followed by a braiding ``tail'' $w$ in $H_n(q)$ we have that:

$$tr(\tau \cdot w) \ =\  \sum_{j}{f_j(q,z)\cdot tr(\tau_j)},$$

\noindent such that $\tau_j\in \Lambda^{aug}_{(k)}$ and $\tau_{j} < \tau$, for all $j$.
\end{thm}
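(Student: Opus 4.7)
The plan is to prove the statement by induction on a complexity measure applied to the pair $(\tau, w)$, with the main rewriting tool being an exchange identity between the loopings $t_i$ and the Hecke generators $g_i$. By linearity of ${\rm tr}$, it suffices to handle $w$ ranging over a basis of $H_n(q)$, and the convenient choice is the Jones-type basis $S_n$, whose elements are products of descending blocks $(g_{i_s} g_{i_s-1} \cdots g_{i_s - k_s})$ with $i_1 < \cdots < i_p$. Let $m$ denote the highest index of a $t_j$ appearing in $\tau$.

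\textbf{High-index elimination and the main exchange.} First, I eliminate high-index generators: every $g_j$ in $w$ with $j>m$ commutes with all factors of $\tau$, so by cyclic invariance of ${\rm tr}$ and iterated applications of the Markov rule ${\rm tr}(ag_n)=z\cdot {\rm tr}(a)$, each such generator contributes a scalar factor of $z$ and can be removed. This reduces to the case where $w$ involves only $g_1,\ldots,g_m$. Second, I use the exchange identity
\[
g_i\, t_i \;=\; (q-1)\, t_i \;+\; q\, t_{i-1}\, g_i,
\]
obtained from the defining relation $t_i = g_i\, t_{i-1}\, g_i$ together with the quadratic relation $g_i^2 = (q-1)g_i + q$; combined with the commutations $g_i t_j = t_j g_i$ for $j < i-1$ and with the braid-relation consequences for $j\in\{i-1,i+1\}$, this lets me push each generator $g_i$ of $w$ through the $t$-string $t_0^{k_0}\cdots t_m^{k_m}$ of $\tau$. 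Iterating expresses $g_i\cdot \tau$ as a linear combination of monomials $\tau_\ell\cdot w_\ell$, where in each summand either (a) $\tau_\ell = \tau$ and the generator $g_i$ has been absorbed, so $w_\ell$ is strictly shorter than $w$, or (b) some exponent $k_i$ has been traded for an exponent on $t_{i-1}$, producing a new $\tau_\ell\in\Lambda^{aug}_{(k)}$ of the same level $k$ but strictly lower index profile, hence $\tau_\ell < \tau$ in the ordering of Definition~\ref{order}. Cyclic invariance of ${\rm tr}$ is then used to bring the next remaining generator of $w$ adjacent to $\tau$, and the exchange is iterated.

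\textbf{Induction and main obstacle.} Closing the induction uses the well-orderedness of $\Lambda^{aug}_{(k)}$ from \cite[Propositions~1 \& 2]{DL2} together with the induction hypothesis on the lexicographic pair (length of $w$, position of $\tau$ in $\Lambda^{aug}_{(k)}$). Each intermediate trace ${\rm tr}(\tau_\ell\cdot w_\ell)$ is, by the induction hypothesis, a linear combination of the desired form, and these combine to give the claimed decomposition. The main obstacle is the renormalization in case (b): after pushing $g_i$ through $\tau$ with the iterated exchange $g_i\, t_i^{k_i} = (q-1)\,t_i^{k_i} + \cdots + q^{k_i}\,t_{i-1}^{k_i}\,g_i$, the resulting monomials interleave $t_{i-1}$ and $t_i$ factors with leftover $g_i$'s and must be brought back to the canonical form $\tau_\ell\cdot w_\ell$ via further commutations through the remaining $t_j$'s. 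A careful case analysis on the exponent tuple is required to verify that every $\tau_\ell$ that arises is strictly smaller than $\tau$ in the ordering of Definition~\ref{order}, while the total level $\sum k_i$ is preserved throughout; this bookkeeping is where the delicate work of the proof is concentrated.
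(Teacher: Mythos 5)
The paper itself offers no proof of this statement---it is quoted as Theorem~9 of \cite{DL2}---so your proposal can only be judged on its own terms. Your overall strategy is the right one, and it is essentially the machinery this paper deploys elsewhere (in Lemma~\ref{basind} and Proposition~\ref{fl}): reduce $w$ to the basis $S_n$ by linearity, peel off top-index generators by conjugation together with the Markov rule ${\rm tr}(ag_n)=z\,{\rm tr}(a)$, push the remaining $g_i$'s through the $t$-string via $g_it_i=(q-1)t_i+qt_{i-1}g_i$ (whose iteration is exactly Eq.~(\ref{sllem})), and close by induction on the well-ordered set $\Lambda^{aug}_{(k)}$.

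There is, however, a genuine mismatch between what your argument produces and what the statement asserts. Your case (a) keeps $\tau_\ell=\tau$ with a strictly shorter tail, so iterating down to the empty tail leaves a summand $f(q,z)\,{\rm tr}(\tau)$ with $\tau_j=\tau$, not $\tau_j<\tau$. Concretely, Eq.~(\ref{sllem}) gives ${\rm tr}(t^{k_0}t_1^{k_1}g_1)=(q-1)\,{\rm tr}(t^{k_0}t_1^{k_1})+(q-1)\sum_{j=1}^{k_1-1}q^j\,{\rm tr}(t^{k_0+j}t_1^{k_1-j})+q^{k_1}z\,{\rm tr}(t^{k_0+k_1})$, whose first term is ${\rm tr}(\tau)$ itself; since the ordering of Definition~\ref{order} ignores braiding tails, $\tau\cdot w$ and $\tau$ are equal in the order, so the strict inequality cannot hold as stated. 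Either the conclusion must be read with $\tau_j\le\tau$ (which is what the rest of the paper actually uses, and what your argument proves), or you must flag explicitly that you are establishing the weaker form---as written you claim the strict version while your own case analysis produces the offending term. Two further steps are stated too glibly: (i) a generator $g_j$ with $j>m$ sitting inside a block of $w\in S_n$ does not simply ``contribute a factor of $z$''; after conjugating it to the end the remainder of its block wraps around to the left of $\tau$, destroying the $\tau\cdot w$ normal form, and an inner induction on the number of strands is needed (this is the standard argument that the trace is well defined on $S_n$, but it must be said); (ii) when an exponent is traded down to zero the resulting monomial acquires a gap in its indices and lies in $\Sigma$ rather than $\Lambda^{aug}$, so the gap-managing step, Eq.~(\ref{gaps}), must be invoked before the outer induction hypothesis can be applied.
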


One very important result in \cite{DL2} is that one can change the order of the exponents by using conjugation and stabilization moves on elements in $\Lambda$ and express them as sums of monomials in $t_i$'s with arbitrary exponents and which are of lower order than the initial elements in $\Lambda$. Note that both conjugation and stabilization moves are captures by the trace rules, and that we translate here Theorem~9 in \cite{DL2} using the trace. 

\begin{thm}{\cite[Theorem~9]{DL2}}\label{exp}
For an element in $\Lambda^{aug}$ followed by a braiding ``tail'' in ${\rm H}_n(q)$ we have that:

$$tr(\tau_{0,m}^{k_{0,m}}\cdot w)\ =\ tr\left(\underset{j}{\sum}\, {\tau_{0,j}^{\lambda_{0,j}}\cdot w_j}\right),$$

\noindent where $\tau_{0,j}^{\lambda_{0,j}} \in \Lambda$ and $w, w_j \in \bigcup_{n\in \mathbb{N}}{\rm H}_n(q)$ for all $j$.
\end{thm}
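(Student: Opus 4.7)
My plan is to prove Theorem~\ref{exp} by lifting the algebra-level reordering result \cite[Theorem~9]{DL2} to the trace level, using the fact that rules (1)--(3) of Theorem~\ref{tr} package conjugation and stabilization of braids as trace identities. The structural backbone is well-founded induction on the ordering $<$ of Definition~\ref{order} restricted to the level set $\Lambda^{aug}_{(k)}$, where $k = \sum_{i=0}^{m}k_i$ is the winding number. This level set is totally and well ordered by \cite[Propositions 1 \& 2]{DL2}, and $k$ is preserved by every trace rule we invoke, so induction on $<$ inside a fixed $\Lambda^{aug}_{(k)}$ is legitimate.

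For the base case, if $\tau_{0,m}^{k_{0,m}} \in \Lambda$ (i.e.\ the exponents already satisfy $k_0 \le k_1 \le \cdots \le k_m$), the statement holds with a single summand. For the inductive step, let $i$ be the least index with $k_i > k_{i+1}$. The heart of the argument is a trace-level swap identity of the shape
\[
tr\bigl(t^{k_0}\!\cdots t_i^{k_i}t_{i+1}^{k_{i+1}}\!\cdots t_m^{k_m}\cdot w\bigr) \;=\; tr\bigl(t^{k_0}\!\cdots t_i^{k_{i+1}}t_{i+1}^{k_i}\!\cdots t_m^{k_m}\cdot \widetilde{w}\bigr) \;+\; \sum_{j} f_j(q,z)\, tr\bigl(\tau'_j\cdot w'_j\bigr),
\]
with each $\tau'_j \in \Lambda^{aug}$ strictly below $\tau_{0,m}^{k_{0,m}}$ under $<$. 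To derive it, I would expand $t_i = g_i\cdots g_1 t g_1\cdots g_i$ and $t_{i+1} = g_{i+1}t_i g_{i+1}$, use $tr(ab)=tr(ba)$ (rule (1)) to cyclically reposition the $g_j$ factors, and then apply the quadratic relation $g_j^2 = (q-1)g_j + q$ to split off the lower-order remainders. Any index gaps introduced by the swap are removed by Theorem~8 of \cite{DL2} (Eq.~(\ref{gaps})), which itself is a conjugation identity and hence trace-invariant.

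Iterating the swap brings the leading monomial's exponent sequence closer to non-decreasing, and the inductive hypothesis is applied to each lower-order term $\tau'_j \cdot w'_j$. After finitely many steps---forced by well-foundedness of $<$ on $\Lambda^{aug}_{(k)}$---one arrives at the decomposition $\sum_j \tau_{0,j}^{\lambda_{0,j}}\cdot w_j$ with $\tau_{0,j}^{\lambda_{0,j}} \in \Lambda$ and $w_j \in \bigcup_n \textrm{H}_n(q)$.

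The main obstacle is verifying the swap identity itself: when the looping elements $t_i^{k_i}$ and $t_{i+1}^{k_{i+1}}$ are commuted past each other through the Hecke generators, many intermediate monomials are produced, and one must check against each clause of Definition~\ref{order}---particularly (b)(ii)($\alpha$)--($\gamma$), which compare position sequences and then rightmost differing exponents by absolute value---that every remainder is strictly smaller than $\tau_{0,m}^{k_{0,m}}$. This is precisely the algebraic content of \cite[Theorem~9]{DL2}; the contribution of the present statement is to recast those conjugation and stabilization steps as honest equalities of trace values, which is exactly what rules (1) and (3) of Theorem~\ref{tr} permit.
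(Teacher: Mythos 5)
The paper never proves this statement: it is imported verbatim (recast via the trace) from \cite[Theorem~9]{DL2}, so there is no internal proof to compare against. Your general strategy --- packaging the conjugation and stabilization steps of the reordering procedure as trace identities via rules (1) and (3) of Theorem~\ref{tr}, expanding $t_{i+1}=g_{i+1}t_ig_{i+1}$ and splitting off remainders with the quadratic relation, and managing gaps with Eq.~(\ref{gaps}) --- is the right idea and consistent with how \cite{DL2} proceeds.

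There is, however, a genuine gap in your termination argument. You assert that each adjacent swap yields the reordered leading monomial plus remainders all strictly below $\tau_{0,m}^{k_{0,m}}$, and that the recursion halts ``by well-foundedness of $<$ on $\Lambda^{aug}_{(k)}$.'' But the reordered leading term need not be smaller than the original: comparing $t^{3}t_1$ with its reordered form $t\,t_1^{3}$ under Definition~\ref{order}(b)(ii)($\beta$), which compares $|k_i|$ from the top index down, gives $t^{3}t_1 < t\,t_1^{3}$. The paper states this explicitly just before the theorem: applying it can lead ``to elements of lower (or even greater) order.'' Hence the main branch of your recursion is not decreasing with respect to $<$, the inductive hypothesis cannot be invoked for it, and the remainders generated at later stages of the iteration are only bounded by the (possibly larger) intermediate monomials rather than by the original $\tau$. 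A correct argument needs a second, genuinely decreasing measure for the reordering iterations (for instance the number of inversions in the exponent sequence), interleaved with the well-founded induction on $<$ for the lower-order remainders and with Theorem~\ref{tails} to strip the braiding ``tails'' that reappear at each stage; setting up this interleaving and verifying that it terminates is precisely the content the paper defers to \cite{DL2} and \cite{DL3}, and it is the part your sketch does not supply.
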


Examples of how to apply Theorems~\ref{convert}, \ref{tails} and \ref{exp} can be found in \cite{DL3}.

\subsection{Relating $\mathcal{S}\left(L(p, 1) \right)$ to $\mathcal{S}({\rm ST})$.}\label{homlen}

In order to simplify this system of equations (\ref{inft}), in \cite{DLP} we first show that performing a bbm on a mixed braid in $B_{1, n}$ reduces to performing bbm's on elements in the canonical basis, $\Sigma_n^{\prime}$, of the algebra ${\rm H}_{1,n}(q)$ and, in fact, on their first moving strand. We then reduce the equations obtained from elements in $\Sigma^{\prime}$ to equations obtained from elements in $\Sigma$. In order now to reduce further the computation to elements in the basis $\Lambda$ of $\mathcal{S}({\rm ST})$, in \cite{DLP} we manage the gaps in the indices of the looping generators of elements in $\Sigma$, obtaining elements in the augmented ${\rm H}_{n}(q)$-module $\Lambda^{aug}$, denoted by $\Lambda^{aug}|{\rm H}_n$. We need to emphasize on the fact that the ``managing the gaps'' procedure, allows the performance of bbm's to take place on {\it any moving} strand. Then, these equations are shown to be equivalent to equations obtained from elements in the ${\rm H}_{n}(q)$-module $\Lambda$, denoted by $\Lambda|{\rm H}_n$, by performing bbm's on any moving strand. We finally eliminate the braiding ``tails'' from elements in $\Lambda|{\rm H}_n$ and reduce the computations to the set $\Lambda$, where the bbm's are performed on any moving strand (see \cite{DLP}). Thus, in order to compute $\mathcal{S}(L(p,1))$, it suffices to solve the infinite system of equations obtained by performing bbm's on any moving strand of elements in the set $\Lambda$. Moreover, in \cite{DL4} we consider the augmented set $\Lambda^{aug}$ and show that the system of equations obtained from elements in $\Lambda$ by performing bbm's on {\it any moving} strand, is equivalent to the system of equations obtained by performing bbm's on the {\it first moving} strand of elements in $\Lambda^{aug}$. It is worth mentioning that although $\Lambda^{aug} \supset \Lambda$, the advantage of considering elements in the augmented set $\Lambda^{aug}$ is that we restrict the performance of the braid band moves only on the first moving strand and, thus, we obtain less equations and more control on the infinite system (\ref{inft}). 

\smallbreak

The above are summarized in the following sequence of equations:

$$
\begin{array}{llllll}
\mathcal{S}\left( L(p,1) \right) & = & \frac{\mathcal{S}({\rm ST})}{<a - bbm_i(a)>},\ a\in B_{1, n}, \ \forall\ i & = & \frac{\mathcal{S}({\rm ST})}{<s^{\prime} - bbm_1(s^{\prime})>},\ s^{\prime}\in \Sigma_n^{\prime} & = \\
&&&&&\\
& = & \frac{\mathcal{S}({\rm ST})}{<s - bbm_1(s)>},\ s\in \Sigma_n & = & \frac{\mathcal{S}({\rm ST})}{<\lambda^{\prime} - bbm_i(\lambda^{\prime})>},\ \lambda^{\prime} \in \Lambda^{aug}|{\rm H}_n,\ \forall\ i & = \\
&&&&&\\
& = & \frac{\mathcal{S}({\rm ST})}{<\lambda^{\prime \prime} - bbm_i(\lambda^{\prime \prime})>},\ \lambda^{\prime \prime}\in \Lambda|{\rm H}_n, \ \forall\ i & = & \frac{\mathcal{S}({\rm ST})}{<\lambda - bbm_i(\lambda)>},\ \lambda\in \Lambda, \ \forall\ i & =\\
&&&&&\\
& = & \frac{\mathcal{S}({\rm ST})}{<\mu - bbm_1(\mu)>},\ \mu\in \Lambda^{aug}. &&&\\
\end{array}
$$

Namely, we have:

\begin{thm}\label{dlpl}{{\rm (\cite{DLP, DL4})}}
\[
\begin{array}{llcll}
{\rm i.} & \mathcal{S}\left( L(p,1) \right)& = & \frac{\mathcal{S}({\rm ST})}{<\lambda - bbm_i(\lambda)>}, & \lambda\in \Lambda, \ \forall\ i.\\
&&&&\\
{\rm ii.} & \mathcal{S}\left( L(p,1) \right)& = & \frac{\mathcal{S}({\rm ST})}{<\mu - bbm_1(\mu)>},& \mu\in \Lambda^{aug}.
\end{array}
\]
\end{thm}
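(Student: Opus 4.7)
My plan is to prove the theorem by establishing the chain of equalities displayed just above the statement; each individual equality will follow from one of the reductions in \cite{DLP, DL4}, and Theorem~\ref{dlpl} is essentially the concatenation of these reductions. The starting point is Equation~(\ref{inft}), $\mathcal{S}(L(p,1))=\mathcal{S}({\rm ST})/\langle a-bbm_i(a)\rangle$ for $a\in B_{1,n}$ and all $i$, which is a direct translation of the braid equivalence of Theorem~\ref{markov} into the skein module setting. The task is then to shrink the indexing family of relations, first from arbitrary braids to basis elements of the Hecke algebra, then from those to the simpler set $\Lambda$ (or $\Lambda^{aug}$), while simultaneously restricting which moving strand bbm's are performed on.

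For part (i), the first step is to observe that since $\Sigma^{\prime}_n$ is a linear basis of ${\rm H}_{1,n}(q)$, every word $a\in B_{1,n}$ maps under $\pi$ to a linear combination of elements of $\Sigma^{\prime}_n$, and the bbm relation is linear; moreover, by using conjugation (move (i) of Theorem~\ref{markov}) one can shift any bbm to act on the first moving strand. This gives the reduction $\langle a-bbm_i(a)\rangle=\langle s^{\prime}-bbm_1(s^{\prime})\rangle$ for $s^{\prime}\in\Sigma^{\prime}_n$. Next, using the conversion formula of Theorem~\ref{convert} (or Equation~(\ref{convert2})) one expresses any element of $\Sigma^{\prime}_n$ as a triangular combination of elements of $\Sigma_n$, so the system passes to $\Sigma_n$. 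Now, to pass from $\Sigma_n$ to $\Lambda^{aug}|{\rm H}_n$, the ``managing the gaps'' procedure of \cite{DLP} (the statement of Equation~(\ref{gaps})) rewrites an element with gaps in its looping indices as a combination of elements without gaps, up to conjugation; crucially, conjugation used in this process may re-index the strands on which bbm's act, which is precisely why the resulting relations must be allowed on any moving strand $i$. Eliminating the braiding tails via Theorems~\ref{tails} and \ref{exp} further reduces $\Lambda^{aug}|{\rm H}_n$ first to $\Lambda|{\rm H}_n$ and then to $\Lambda$, with bbm's still ranging over all moving strands. This establishes part (i).

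For part (ii), I would argue that passing from relations indexed by $\Lambda$ (with bbm's on all strands) to relations indexed by $\Lambda^{aug}$ (with bbm's only on the first strand) is a trade: one enlarges the generating set to absorb the freedom of choosing the strand. The mechanism is that a bbm on the $i$-th moving strand of an element $\lambda=t^{k_0}t_1^{k_1}\cdots t_n^{k_n}\in\Lambda$ can be rewritten, after conjugating the corresponding mixed braid so that the $i$-th strand becomes the first moving strand, as a bbm on the first strand of a (possibly non-ordered) word of the form $t^{k_i}t_1^{k_{i+1}}\cdots\in\Lambda^{aug}$. Since $\Lambda^{aug}\supset\Lambda$ and every element of $\Lambda^{aug}$ can, by Theorem~\ref{exp}, be expressed in terms of $\Lambda$ modulo elements of strictly lower order, the two systems of relations generate the same submodule of $\mathcal{S}({\rm ST})$. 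This is exactly the content of \cite{DL4}, and it yields the final equality in the chain.

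The main obstacle, and the step I would expect to absorb most of the technical work, is the equivalence between ``bbm on any strand of $\Lambda$'' and ``bbm on the first strand of $\Lambda^{aug}$.'' The delicate point is that the rewriting is not a simple bookkeeping exercise: after conjugating to bring the $i$-th strand into first position, one must track how the remaining looping generators interact, and then use Theorems~\ref{convert}, \ref{tails}, and~\ref{exp} together with Equation~(\ref{gaps}) to return the output to a combination of ordered elements of $\Lambda$. The fact that the ordering of Definition~\ref{order} is a well-ordering on each $\Lambda_{(k)}$ (as recalled after Equation~(\ref{levk})) allows a descending induction on the order to close the argument, guaranteeing that no infinite cascade of new relations is introduced and that the two quotients indeed coincide.
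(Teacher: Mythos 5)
Your proposal reconstructs exactly the chain of reductions that the paper itself uses to justify this theorem (the displayed sequence of equalities in \S\ref{homlen}, with the technical details deferred to \cite{DLP, DL4}): from arbitrary mixed braids to $\Sigma^{\prime}_n$ with bbm's on the first strand, then to $\Sigma_n$, then via gap-management to $\Lambda^{aug}|{\rm H}_n$ with bbm's on any strand, then to $\Lambda|{\rm H}_n$ and $\Lambda$, and finally the trade-off between enlarging to $\Lambda^{aug}$ and restricting to the first moving strand. This is essentially the same approach as the paper, including the correct emphasis on why conjugation in the gap-managing step forces the relations to range over all moving strands and on the well-ordering of $\Lambda_{(k)}$ to terminate the rewriting.
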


\section{Relating the ``positive'' and ``negative'' sub-modules of $\mathcal{S}\left(L(p, 1)\right)$}\label{relposneg}

In this section we relate the infinite system of equations obtained by performing bbm's on elements in $\mathcal{B}^+$ to the infinite system obtained by performing bbm's on elements in $\mathcal{B}^-$. We present now the sets $\mathcal{B}^+$ and $\mathcal{B}^-$ in open braid form:

\begin{equation}\label{Bpn}
\begin{array}{lcl}
\Lambda^{+} & := & \{t_0^{k_0}t_1^{k_1}\ldots t_{n}^{k_n},\ k_i \in \mathbb{N}\backslash \{0\}\ :\ k_{i}\leq k_{i-1},\, \forall\, i\}\\
&&\\
\Lambda^{-} & := & \{t_0^{k_0}t_1^{k_1}\ldots t_{n}^{k_n},\ k_i \in \mathbb{Z}\backslash \mathbb{N}\ :\ k_{i}\leq k_{i-1},\, \forall\, i\}\\
\end{array}
\end{equation}

We now augment the sets $\Lambda^{+}, \Lambda^{-}$ by allowing arbitrary exponents in monomials in the $t_i$'s, and we define the the corresponding subsets of $\Lambda^{+}, \Lambda^{-}$ of level $k$ as follows:

\begin{defn}\label{ponel}\rm
We define the {\it ``positive'' subset} of $\Lambda^{aug}_{(k)}$:
\begin{equation}\label{lamaugp}
\Lambda^{aug_+}_{(k)}\ :=\{t_0^{k_0}t_1^{k_1}\ldots t_{n}^{k_n},\ k_i \in \mathbb{N}\backslash \{0\}\}.
\end{equation}

\noindent and the the {\it ``negative'' subset} of $\Lambda^{aug}_{(k)}$:
\begin{equation}\label{lamaugn}
\Lambda^{aug_-}_{(k)}\ :=\{t_0^{k_0}t_1^{k_1}\ldots t_{n}^{k_n},\ k_i \in \mathbb{Z}\backslash \mathbb{N}\}.
\end{equation}
\end{defn}

\bigbreak

The infinite system of equations obtained by performing $\pm$-$bbm_1$'s on elements in $\Lambda^{aug_+}$ is related to the infinite system of equations obtained by performing $\mp$-$bbm_1$'s on elements in $\Lambda^{aug_-}$ by the following maps:

\begin{defn}\label{mapfI}\rm
\begin{itemize}
\item[(i)] We define the automorphism $f: \Lambda^{aug} \rightarrow \Lambda^{aug}$ such that:
\smallbreak
\[
\begin{array}{rcll}
f(\tau_1\cdot \tau_2) & = & f(\tau_1)\cdot f(\tau_2),& \forall\, \tau_1, \tau_2\in \Lambda^{aug}\\
&&&\\
t_i^k & \mapsto & t_{i}^{-k}, & \forall\ i\in \mathbb{N}^*,\ \forall\ k\in \mathbb{Z} \setminus \mathbb{N}\\\
\sigma_i & \mapsto & \sigma_i^{-1}, & \forall\ i\in \mathbb{N}^*
\end{array}
\]
\bigbreak
\bigbreak

\bigbreak
\item[(ii)] We define the map $I: R[z^{\pm 1}, s_k] \rightarrow R[z^{\pm 1}, s_k]$, $k\in \mathbb{Z}$ such that:
\smallbreak
\[
\begin{array}{rcll}
I(\tau_1 + \tau_2) & = & I(\tau_1) + I(\tau_2), & \forall\, \tau_1, \tau_2\\
I(\tau_1 \cdot \tau_2) & = & I(\tau_1) \cdot I(\tau_2), & \forall\, \tau_1, \tau_2\\
&&&\\
s_{-k} & \mapsto & s_{k}, & \forall\, k \in \mathbb{N}\\
s_{p-k} & \mapsto & s_{p+k}, & \forall\, k\ :\ 0 \leq k\leq p\\
z & \mapsto & \lambda \cdot z &\\
q^{\pm 1} & \mapsto & q^{\mp 1}&\\
\frac{\lambda^k}{z} & \mapsto & \frac{1}{\lambda^{k+1}z}, & \forall\, k\\
\end{array}
\]
\end{itemize}
\end{defn}

\smallbreak

We now state the main result of this paper:

\begin{thm}\label{mthm}
The equations obtained by imposing on the invariant $X$ relations coming from the performance of a $\pm $-$bbm_1$ on an element $\tau$ in $\Lambda^{aug_+}$ are equivalent to the image of the equations obtained by performing $\mp$-$bbm_1$ on its corresponding element $f(\tau)$ in $\Lambda^{aug_-}$ under $I$. That is:
$$I\left(X_{\widehat{f(\tau)}} =  X_{\widehat{bbm_{\mp 1}(f(\tau))}}\right)\ \Leftrightarrow \ X_{\widehat{\tau}} =  X_{\widehat{bbm_{\pm 1}(\tau)}}$$
\end{thm}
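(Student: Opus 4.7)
The plan is to reduce Theorem~\ref{mthm} to two separately verifiable facts:
\emph{(a)} the braid band move is compatible with $f$, in the sense that $f(bbm_{\pm 1}(\tau))$ and $bbm_{\mp 1}(f(\tau))$ represent the same class in $\mathcal{S}(L(p,1))$ modulo conventions absorbed in $I$; and
\emph{(b)} the coefficient substitution $I$ implements on the level of the invariant $X$ the effect of the map $f$, namely $I(X_{\widehat{f(\mu)}}) = X_{\widehat{\mu}}$ for every $\mu\in\Lambda^{aug}$.
Granted both, apply $I$ termwise to the equation $X_{\widehat{f(\tau)}}=X_{\widehat{bbm_{\mp 1}(f(\tau))}}$: by (b) the left-hand side becomes $X_{\widehat{\tau}}$, and by (a) together with (b) the right-hand side becomes $X_{\widehat{bbm_{\pm 1}(\tau)}}$. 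The converse direction follows from the invertibility of $f$ and $I$.

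To establish (b), I would compute $X_{\widehat{\mu}}$ explicitly for a generic $\mu = t_0^{k_0}\cdots t_n^{k_n}\in\Lambda^{aug_+}$: rewrite $\mu$ in the Turaev-style basis via Equation~\ref{convert2}, whose leading term is $q^{\sum_{j=1}^n jk_j}\,t^{k_0}{t'_1}^{k_1}\cdots{t'_n}^{k_n}$ with trace $s_{k_0}s_{k_1}\cdots s_{k_n}$ by Theorem~\ref{tr}, while the lower-order and braiding-tail corrections reduce via Theorems~\ref{tails} and \ref{exp} to sums of lower-order monomials. Since the writhe of $\mu$ is $2\sum_{j=1}^n jk_j$, the leading contribution to $X_{\widehat{\mu}}$ is
\[
\Delta^n\,q^{\sum jk_j}\,\lambda^{\sum jk_j}\,s_{k_0}s_{k_1}\cdots s_{k_n}.
\]
The parallel computation on $f(\mu) = t_0^{-k_0}\cdots t_n^{-k_n}$ negates every $k_j$ and the writhe, producing the leading term $\Delta^n\,q^{-\sum jk_j}\,\lambda^{-\sum jk_j}\,s_{-k_0}\cdots s_{-k_n}$. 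The rules defining $I$ are tailored so that $q\mapsto q^{-1}$ and $s_{-k}\mapsto s_k$ dispose of the obvious factors, while the combined rules $z\mapsto\lambda z$ and $\lambda^k/z\mapsto 1/(\lambda^{k+1}z)$ force $I(\lambda)=1/\lambda$ (as one verifies directly using $\lambda = (z+1-q)/(qz)$), whence $I(\Delta)=\Delta$. Thus $I$ sends the leading term of $X_{\widehat{f(\mu)}}$ to the leading term of $X_{\widehat{\mu}}$, and the identity extends to the lower-order corrections by induction on the ordering of Definition~\ref{order}.

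For (a), writing $bbm_{\pm 1}(\tau) = t^p\tau_+\sigma_1^{\pm 1}$ (Theorem~\ref{markov}(iv)) and applying $f$ yields $t^{-p} f(\tau)_+ \sigma_1^{\mp 1}$; this agrees with $bbm_{\mp 1}(f(\tau)) = t^p f(\tau)_+ \sigma_1^{\mp 1}$ modulo the freedom in the sign of the surgery coefficient (cf.\ the note following Theorem~\ref{markov} and the fact that $L(p,1)\cong L(-p,1)$), and the rule $s_{p-k}\mapsto s_{p+k}$ is precisely the substitution required to intertwine the $t^p$-contributions between the two sides. The principal technical obstacle is to propagate (b) through the lower-order corrections: the reductions via Theorems~\ref{convert}, \ref{tails}, and \ref{exp} produce sums of monomials in $\Lambda^{aug}_{(k)}$ with coefficients that are intricate polynomials in $q,z,\lambda$, and checking that $I$ commutes with these reductions requires a careful induction on the level and the ordering of Definition~\ref{order}, together with the observation that $f$ descends to an order-reversing bijection $\Lambda^{aug_+}_{(k)}\to\Lambda^{aug_-}_{(-k)}$.
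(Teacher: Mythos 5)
Your overall architecture (split into a trace/invariant compatibility statement and a bbm compatibility statement, apply $I$ termwise, use invertibility of $f$ and $I$ for the converse) matches the paper's strategy, and your observations that $I(\lambda)=\lambda^{-1}$ and $I(\Delta)=\Delta$ follow from $q\mapsto q^{-1}$, $z\mapsto\lambda z$ are correct and are exactly why the paper's Lemma~\ref{coef} works. Your claim (b) is essentially Proposition~\ref{trtau} of the paper, and your sketch of it (leading term from Eq.~(\ref{convert2}), induction on the ordering of Definition~\ref{order} for the corrections) is the right idea. However, your claim (a) contains a genuine gap. The elements $f(bbm_{\pm1}(\tau))=t^{-p}f(\tau)_+\sigma_1^{\mp1}$ and $bbm_{\mp1}(f(\tau))=t^{p}f(\tau)_+\sigma_1^{\mp1}$ are \emph{not} the same element of $\mathcal{S}({\rm ST})$ and do not have the same invariant $X$ (already for $\tau=1$ one trace involves $s_{-p}$ and the other $s_{p}$, with different polynomial coefficients in $q,z$). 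The appeal to $L(p,1)\cong L(-p,1)$ cannot repair this: the infinite system is defined by the fixed relation $\tau\sim t^{p}\tau_+\sigma_1^{\pm1}$ of Theorem~\ref{markov}(iv), so replacing $t^{p}$ by $t^{-p}$ changes the equations being imposed; and arguing that the two elements agree ``in $\mathcal{S}(L(p,1))$'' is circular, since the theorem is a statement about the equations used to compute that module. The paper explicitly flags this asymmetry in Definition~\ref{sym} (``although the loop generator $t^p$ appears in both'') and does \emph{not} factor the right-hand side through $f$; instead it proves the identity $I\left(tr(bbm_{\mp}(f(\tau)))\right)=tr(bbm_{\pm}(\tau))$ directly, by an induction (Lemma~\ref{basind} and Proposition~\ref{fl}, resting on the rewriting Lemmas~\ref{imp} and \ref{llem}) that tracks how $t^{p}$ combines with the negated exponents to produce the $s_{p-k}$ versus $s_{p+k}$ terms that the rule $s_{p-k}\mapsto s_{p+k}$ is designed to match.

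Beyond this, the part you correctly identify as ``the principal technical obstacle'' --- propagating the identities through the lower-order corrections produced by Theorems~\ref{convert}, \ref{tails} and \ref{exp} --- is precisely where the paper's proof lives, and it is not carried out in your proposal. This is not merely bookkeeping: the base case alone (Lemma~\ref{basind}) requires verifying nontrivial cancellations such as $I\left(q^{-k-1}z+q^{-k}(q^{-1}-1)\right)=q^{k}z$ using $\lambda=\frac{z+1-q}{qz}$, and the inductive step for the right-hand sides needs the commutation identities of Lemma~\ref{llem} to push $\sigma_{n+1}^{\pm1}$ past powers of $t_n$ so that the resulting monomials are genuinely of lower order. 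So the proposal would be correct after (i) replacing (a) by the direct statement $I\left(tr(bbm_{\mp}(f(\tau)))\right)=tr(bbm_{\pm}(\tau))$ and (ii) supplying the two inductions; as written, step (a) would fail.
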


Equivalently we have:

\begin{cor}\label{cor1}
The following diagram commutes:
\[
\begin{matrix}
\Lambda^{aug}_{(k)} & \ni & \tau & \overset{bbm_{\pm}}{\rightarrow} & bbm_{\pm 1}(\tau) & \Rightarrow & X_{\widehat{\tau}} =  X_{\widehat{bbm_{1}(\tau)}}, & X_{\widehat{\tau}} = X_{\widehat{bbm_{-1}(\tau)}}\\
                  &     & \updownarrow f &                     &                   &             &  \uparrow I &  \uparrow I\\
\Lambda^{aug}_{(-k)} & \ni & f(\tau) & \overset{bbm_{\mp}}{\rightarrow} & bbm_{\mp 1}(\tau) & \Rightarrow & X_{\widehat{f(\tau)}} =  X_{\widehat{bbm_{-1}(f(\tau))}}, & X_{\widehat{f(\tau)}} = X_{\widehat{bbm_{+1}(f(\tau))}}\\
\end{matrix}
\]
\end{cor}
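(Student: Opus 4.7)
The plan is to reduce both sides of the claimed equations to explicit polynomial identities in $R[z^{\pm 1},s_k]$ via the Markov trace of Theorem~\ref{tr} and the conversion/reduction results of Section~\ref{turtolam}, and then to verify that the map $I$ intertwines them term by term. For $\tau = t^{k_0}t_1^{k_1}\cdots t_n^{k_n}\in \Lambda^{aug_+}$, I would first apply Theorem~\ref{convert} to rewrite $\tau$ in the ${t_i^{\prime}}$-basis, obtaining the leading term $q^{\sum n k_n}\, t^{k_0}{t_1^{\prime}}^{k_1}\cdots{t_n^{\prime}}^{k_n}$ plus a remainder of strictly smaller order in the sense of Definition~\ref{order}; taking traces via rule~(4) of Theorem~\ref{tr} then expresses $\mathrm{tr}(\tau)$ as an explicit polynomial in the $s_{k_i}$ with coefficients in $R$, the remainder being resolved by Theorems~\ref{tails},~\ref{exp} and Equation~\ref{gaps}. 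For $bbm_{\pm 1}(\tau)=t^{p}\cdot \tau_+\cdot \sigma_1^{\pm 1}$, one applies the same conversion together with trace rule~(3) for $\sigma_1^{+1}$, or with the quadratic relation followed by rule~(3) in the $\sigma_1^{-1}$ case, to produce the ``positive'' equation $X_{\widehat{\tau}}=X_{\widehat{bbm_{\pm 1}(\tau)}}$ as a concrete polynomial identity.

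Running the same reductions for $f(\tau)=t^{-k_0}t_1^{-k_1}\cdots t_n^{-k_n}$ and its $\mp$-$bbm_1$ produces the ``negative'' equation, whose leading monomial in the $s$'s has reversed signs in all indices, $p-k_0$ in place of $p+k_0$, opposite powers of $q$, and a crossing of opposite sign. The heart of the proof is then to verify that $I$ sends the negative equation to the positive one at the level of each atomic ingredient of the reduction: (a) the exponent flip $k_i\mapsto -k_i$ induced by $f$ on every looping generator is absorbed by the rules $I(s_{-k})=s_k$ and $I(s_{p-k})=s_{p+k}$; (b) starting from the quadratic relation $g_i^2=(q-1)g_i+q$ one derives $\mathrm{tr}(a g_n^{-1})=\lambda z\cdot \mathrm{tr}(a)$, so that the rule $z\mapsto \lambda z$ converts the stabilization factor arising from $\sigma_1^{-1}$ into the one from $\sigma_1^{+1}$; (c) inverting all crossings flips the quadratic to its inverse, forcing $q\mapsto q^{-1}$, which simultaneously inverts the prefactor $q^{\sum nk_n}$ of Theorem~\ref{convert}; (d) a direct computation using $\lambda=(z+1-q)/(qz)$ together with (b) and (c) yields $I(\lambda)=1/\lambda$, consistent with the rule $I(\lambda^k/z)=1/(\lambda^{k+1}z)$, and accounts for the normalization factors $\Delta^{n-1}(\sqrt{\lambda})^{e}$ when the exponent sum $e$ of the $\sigma_i$'s in $\alpha$ flips sign under $f$.

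The main obstacle is the control of the lower-order terms generated by the reductions: the remainder in Theorem~\ref{convert}, the gap-managing sums of Equation~\ref{gaps}, and the braiding-tail manipulations of Theorems~\ref{tails} and~\ref{exp} all produce sums of elements of $\Lambda^{aug}$ that are strictly smaller in the sense of Definition~\ref{order}. Rather than computing these terms in closed form, I would proceed by a double induction on the level (Equation~\ref{levk}) and, within each level, on the ordering of Definition~\ref{order}: by the inductive hypothesis the equivalence under $I$ already holds for all elements of strictly smaller order, while the atomic compatibilities (a)--(d) above propagate it to the leading terms. Combining these, the positive and negative equations are mapped into one another by $I$, establishing Theorem~\ref{mthm}; Corollary~\ref{cor1} is then an immediate restatement in diagrammatic form.
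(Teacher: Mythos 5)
Your proposal follows essentially the same route as the paper: reduce both equations to trace identities, exploit the ``symmetry'' of the quadratic/conversion relations under flipping exponents and crossings so that $I$ intertwines the atomic coefficients (including $I(\lambda)=1/\lambda$, $z\mapsto\lambda z$, $q\mapsto q^{-1}$, $s_{-k}\mapsto s_k$), and absorb all lower-order remainder terms by strong induction on the ordering of Definition~\ref{order}. The paper merely packages this as three separate statements (Lemma~\ref{coef} for the prefactor, Proposition~\ref{trtau} for the left-hand sides, Proposition~\ref{fl} with Lemmas~\ref{imp}--\ref{basind} for the bbm sides), which is an organizational rather than a substantive difference.
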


\begin{defn}\label{sym}\rm
We will say that the elements $bbm_{\pm}{\tau}$ and $bbm_{\mp}{f(\tau)}$ are {\it ``symmetric''} with respect to the sign of their exponents (or just ``symmetric''), although the loop generator $t^p$ appears in both. Moreover, an element in ${\rm H}_n(q)$ will be called {\it braiding ``tail''} and two braiding ``tails'' will be called ``symmetric'' with respect to the sign of their exponents, if one is obtained from the other by changing $\sigma_i^{\pm 1}$ to $\sigma_i^{\mp 1}$.
\end{defn}

\subsection{The infinite system}

Let $\tau_{0,m}^{k_{0,m}} \in \Lambda^{aug}_{(k)}$, that is $\sum_{i=0}^{m}k_i=k$. We present some results on the infinite system of equations (\ref{inft}):

$$
\left\{\begin{matrix}
X_{\widehat{\tau_{0,m}^{k_{0,m}}}}  & = & X_{\widehat{t^{p}\tau_{1,m+1}^{k_{0,m}}\sigma_1}} &\\
&&& \\
X_{\widehat{\tau_{0,m}^{k_{0,m}}}} & = & X_{\widehat{t^{p}\tau_{1,m+1}^{k_{0,m}}\sigma_1^{-1}}} &
\end{matrix}\right.
$$

\noindent which is equivalent to (see Eq.~(\ref{eqpos}) and (\ref{eqneg})):

$$
\left\{\begin{matrix}
tr(\tau_{0,m}^{k_{0,m}})  & = & \frac{1}{z} \cdot \lambda^{\sum_{j=0}^{m}\, (j+1) k_j} \cdot tr({t^{p}\tau_{1,m+1}^{k_{0,m}}g_1}) &\\
&&& \\
tr(\tau_{0,m}^{k_{0,m}}) & = & \frac{1}{z} \cdot \lambda^{\sum_{j=0}^{m}\, (j+1) k_j - 1} \cdot tr(t^{p}\tau_{1,m+1}^{k_{0,m}}g_1^{-1}) &
\end{matrix}\right.
$$

We have the following:

\begin{prop}\label{com}
The unknowns $s_1, s_2, \ldots $ of the system commute.
\end{prop}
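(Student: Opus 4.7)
The plan is to observe that the proposition is an essentially formal consequence of the fact that the codomain of the Lambropoulou trace is a commutative ring, reinforced geometrically by the commutativity of the skein algebra $\mathcal{S}({\rm ST})$.

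First I would recall from Theorem~\ref{tr} that the Markov trace ${\rm tr}$ takes values in $R(z,s_{k}) = \mathbb{C}[q^{\pm 1}](z,\{s_{k}\}_{k\in\mathbb{Z}})$, which is by definition a ring of rational functions in commuting formal indeterminates. Any trace value arising while solving the system (\ref{inft}) is obtained by iterated application of the four trace rules of Theorem~\ref{tr} to an element of $\mathcal{H}$, and each such application yields a polynomial expression in the $s_{k}$'s sitting inside this commutative ring. Consequently the $s_{k}$'s, viewed as unknowns of the infinite system, commute as ring elements.

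To give the statement a geometric interpretation, I would verify it directly on the simplest non-trivial case. For any $a,b\in\mathbb{Z}\setminus\{0\}$, consider the elements $t^{a}{t^{\prime}_{1}}^{b}$ and $t^{b}{t^{\prime}_{1}}^{a}$ of ${\rm H}_{1,2}(q)\subset\mathcal{H}$. Applying trace rules (2) and (4) yields
\[
{\rm tr}(t^{a}{t^{\prime}_{1}}^{b}) \;=\; s_{b}\cdot{\rm tr}(t^{a}) \;=\; s_{b}\,s_{a},\qquad
{\rm tr}(t^{b}{t^{\prime}_{1}}^{a}) \;=\; s_{a}\cdot{\rm tr}(t^{b}) \;=\; s_{a}\,s_{b}.
\]
The closures of $t^{a}{t^{\prime}_{1}}^{b}$ and $t^{b}{t^{\prime}_{1}}^{a}$ represent the same oriented link in ${\rm ST}$, namely a two-component link consisting of parallel loops of winding numbers $a$ and $b$ about the complementary solid torus, differing only by the relabeling of which component is designated the ``first'' moving strand. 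Since $X$ is an isotopy invariant of oriented links in ${\rm ST}$, this equality of invariants forces $s_{a}s_{b}=s_{b}s_{a}$ for all $a,b\in\mathbb{Z}\setminus\{0\}$, and the extension to arbitrary pairs follows by repeating the argument on basis elements of $\Lambda^{\prime}$ of higher rank.

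The essentially only subtlety is verifying that the two closures above are indeed isotopic in ${\rm ST}$; this is immediate from the diagrammatic form of the basis elements (cf.\ Figure~\ref{tur} and Figure~\ref{basel}), since both closures realize the core of ${\rm ST}$ cabled by two parallel components carrying the prescribed winding numbers. Beyond this observation, the proof is purely formal and reflects nothing more than the commutativity of the skein algebra structure on $\mathcal{S}({\rm ST})$ recalled in the paragraph following Theorem~\ref{turaev}.
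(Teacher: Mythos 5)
Your proof is correct and takes essentially the same route as the paper: the paper's own argument also derives $s_{k_n}\cdots s_{k_1}=s_{\varphi(k_n)}\cdots s_{\varphi(k_1)}$ by observing that permuting the exponents among the looping generators produces isotopic closures in ${\rm ST}$ and then invoking the isotopy invariance of $X$ (its Figure~\ref{pic12}, showing $t^{-1}t_1^{\prime}=t{t_1^{\prime}}^{-1}$, is exactly your two-component case). Your reduction to the pairwise case $s_as_b=s_bs_a$ suffices, since transpositions generate all permutations.
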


\begin{proof}
Consider the set of all permutations of the set $S={k_1,\ldots k_n}$ and let $\varphi$ be a bijection from the set $S$ to itself. We consider now the elements
$\alpha={t^{\prime}_{i_1}}^{k_1}\ldots {t^{\prime}_{i_n}}^{k_n}$ and $\beta={t^{\prime}_{i_1}}^{\varphi(k_1)}\ldots {t^{\prime}_{i_n}}^{\varphi(k_n)}$, where $0\leq i_1\leq i_2 \leq \ldots \leq i_n$ of the basis of $S(ST)$.
We have that: $tr(\alpha)=s_{k_n}\ldots s_{k_1}$ and $tr(\beta)=s_{\varphi(k_n)}\ldots s_{\varphi(k_1)}$.
We compute the invariant $X$ on the closures $\widehat{\alpha}, \widehat{\beta}$ of $\alpha$ and $\beta$, respectively, and we obtain:
$X_{(\widehat{\alpha})}=[-\frac{1-\lambda q}{\sqrt{\lambda}}]^{n-1}\sqrt{\lambda}^{0}tr(\alpha)=[-\frac{1-\lambda q}{\sqrt{\lambda}}]^{n-1}s_{k_n}\ldots s_{k_1}$ and
$X_{(\widehat{\beta})}=[-\frac{1-\lambda q}{\sqrt{\lambda}}]^{n-1}\sqrt{\lambda}^{0}tr(\beta)=[-\frac{1-\lambda q}{\sqrt{\lambda}}]^{n-1}s_{\varphi(k_n)}\ldots s_{\varphi(k_1)}$.
Now, the $n$-component link $\widehat{\alpha}$ is isotopic to $\widehat{\beta}$ in $ST$, as illustrated in Figure~\ref{pic12} for the case of two components.
So, we have that $X_{(\widehat{\alpha})}=X_{(\widehat{\beta})}$, equivalently,
\begin{equation}\label{unknown}
s_{k_n}\ldots s_{k_1}=s_{\varphi(k_n)}\ldots s_{\varphi(k_1)}
\end{equation}
and so the unknowns of the system commute.\\
Equation~\ref{unknown} holds for any subset $S$ of $\mathbb{Z}$ and for any permutation $\phi$ of $S$, hence the unknowns $s_i$ of the system
 $(\ref{inft})$ must commute.
\end{proof}

\begin{figure}
\begin{center}
\includegraphics[width=4.5in]{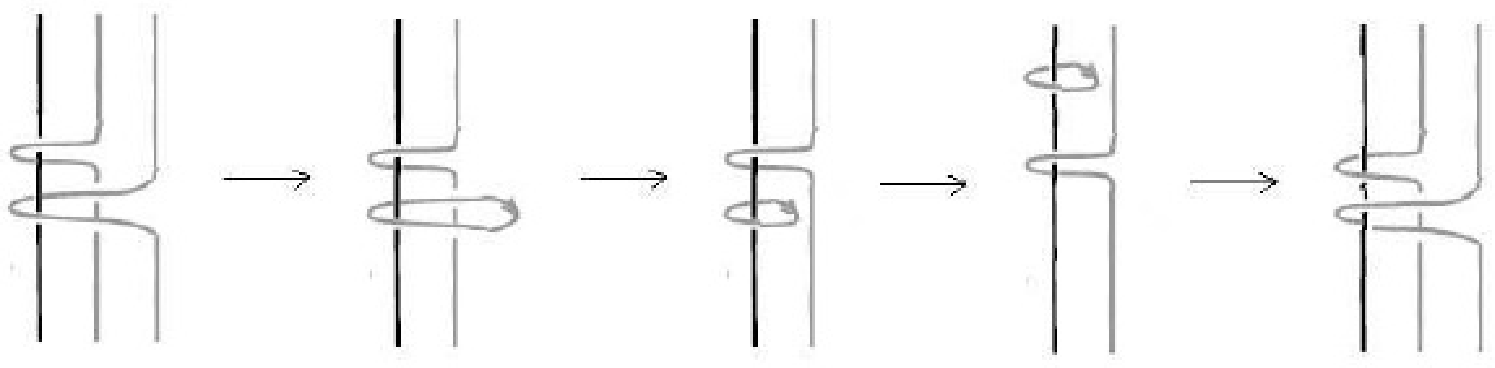}
\end{center}
\caption{ $t^{-1}t_1^{\prime}\ = \ t{t_1^{\prime}}^{-1}$. }
\label{pic12}
\end{figure}

\subsection{The modules $\frac{\Lambda^{aug_+}}{<{\rm bbm_1}>}$ and $\frac{\Lambda^{aug_-}}{<{\rm bbm_1}>}$}\label{msec}

From now on, we will consider all braid band moves to take place on the first moving strand of elements in $\Lambda^{aug}$ and we will denote by $bbm_+(\tau)$ the result of the performance of a positive $bbm_1$ and $bbm_-(\tau)$ will correspond to the result of the performance of a negative $bbm_1$ on $\tau\in \Lambda^{aug}$.

\smallbreak

Let $\tau:= \tau_{0, n}^{k_{0, n}}\in \Lambda^{aug}_{+}$ and perform a $bbm_+$. We have that 

$$bbm_+(\tau)=t^p\tau_{1, n+1}^{k_{0, n}}\sigma_1$$

\noindent and we obtain the equation:

\[
X_{\widehat{\tau}}=X_{\widehat{bbm_+(\tau)}}\ \Leftrightarrow\ tr(\tau_{0, n}^{k_{0, n}})=-\frac{1-\lambda q}{\sqrt{\lambda}(1-q)}\, \sqrt{\lambda}^{\, 1+\underset{i=0}{\overset{n}{\sum}}\, 2(i+1)k_i}\cdot tr(t^p \tau_{1, n+1}^{k_0, n}g_1)
\]

\noindent and since $\lambda=\frac{z+1-q}{qz}$, we obtain

\begin{equation}\label{eqpos}
tr(\tau_{0, n}^{k_{0, n}})=\frac{\lambda^{\underset{i=0}{\overset{n}{\sum}}\, (i+1)k_i}}{z}\cdot tr(t^p \tau_{1, n+1}^{k_0, n}g_1)
\end{equation}

Consider now $f(\tau):= \tau_{0, n}^{-k_{0, n}}\in \Lambda^{aug}_{-}$ and perform a $bbm_-$. We have that 

$$bbm_-(f(\tau))=t^p\tau_{1, n+1}^{-k_{0, n}}\sigma_1^{-1}$$

\noindent and we obtain the equation:

\[
X_{\widehat{f(\tau)}}=X_{\widehat{bbm_-(f(\tau))}}\ \Leftrightarrow\ tr(\tau_{0, n}^{-k_{0, n}})=-\frac{1-\lambda q}{\sqrt{\lambda}(1-q)}\, \sqrt{\lambda}^{\, 1-\underset{i=0}{\overset{n}{\sum}}\, 2(i+1)k_i}\cdot tr(t^p \tau_{1, n+1}^{k_0, n}g_1)
\]

\noindent , that is:

\begin{equation}\label{eqneg}
tr(\tau_{0, n}^{-k_{0, n}})=\frac{\lambda^{-1-\underset{i=0}{\overset{n}{\sum}}\, (i+1)k_i}}{z}\cdot tr(t^p \tau_{1, n+1}^{-k_0, n}g_1^{-1})
\end{equation}

In order to prove Corollary~\ref{cor1}, we first prove that the image of the coefficient in Equation~\ref{eqneg} under the map $I$ is equal to the coefficient of Equation~\ref{eqpos}. Indeed we have the following:

\begin{lemma}\label{coef}
\[
I\left( \frac{\lambda^{-1-\underset{i=0}{\overset{n}{\sum}}\, (i+1)k_i}}{z} \right)\ =\ \frac{\lambda^{\underset{i=0}{\overset{n}{\sum}}\, (i+1)k_i}}{z}
\]
\end{lemma}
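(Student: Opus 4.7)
The plan is to apply the defining rules of the map $I$ from Definition~\ref{mapfI} directly. Let me abbreviate the exponent by setting $K := \sum_{i=0}^{n}(i+1)k_i$, so that the quantity to be evaluated is $I(\lambda^{-1-K}/z)$, and the target is $\lambda^{K}/z$.

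The quickest route is to invoke the rule $\lambda^{k}/z \mapsto 1/(\lambda^{k+1}z)$ with $k = -1-K$. This gives
\[
I\!\left(\frac{\lambda^{-1-K}}{z}\right) \;=\; \frac{1}{\lambda^{(-1-K)+1}\, z} \;=\; \frac{1}{\lambda^{-K}z} \;=\; \frac{\lambda^{K}}{z},
\]
which is exactly the right-hand side. So the proof reduces to a one-line substitution.

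However, before doing this I would want to address a consistency point, since $I$ has been defined by separate rules on $z$, on $q^{\pm 1}$, and on the combinations $\lambda^{k}/z$ (and $\lambda$ itself is defined by $\lambda = (z+1-q)/(qz)$ in terms of $z$ and $q$). The sensible thing to do first is to verify that these rules agree, by checking that the rule $I(\lambda^{k}/z) = 1/(\lambda^{k+1}z)$ is forced by the rules $I(z)=\lambda z$ and $I(q)=q^{-1}$. A short computation gives
\[
I(\lambda) \;=\; I\!\left(\frac{z+1-q}{qz}\right) \;=\; \frac{\lambda z + 1 - q^{-1}}{q^{-1}\lambda z} \;=\; \frac{qz}{z+1-q} \;=\; \frac{1}{\lambda},
\]
and therefore $I(\lambda^{k}/z) = (1/\lambda)^{k}/(\lambda z) = 1/(\lambda^{k+1}z)$, which matches the stated rule. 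This consistency check makes the one-line substitution above rigorous.

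With this verification in place, one can either invoke the rule directly as in the first display, or re-derive the conclusion by applying $I(\lambda)=1/\lambda$ and $I(z)=\lambda z$ term by term:
\[
I\!\left(\frac{\lambda^{-1-K}}{z}\right) \;=\; \frac{(1/\lambda)^{-1-K}}{\lambda z} \;=\; \frac{\lambda^{1+K}}{\lambda z} \;=\; \frac{\lambda^{K}}{z}.
\]
The only ``obstacle'' is purely bookkeeping, namely keeping track of the sign in the exponent shift; there is no genuine difficulty, and the lemma is essentially just a clean encoding of the fact that $I$ is the involution determined by $\lambda \leftrightarrow \lambda^{-1}$, $z \leftrightarrow \lambda z$.
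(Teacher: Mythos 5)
Your proof is correct and follows essentially the same route as the paper: a direct application of the rule $\frac{\lambda^{k}}{z}\mapsto\frac{1}{\lambda^{k+1}z}$ with $k=-1-\sum_{i}(i+1)k_i$. The extra consistency check that this rule is compatible with $I(z)=\lambda z$ and $I(q)=q^{-1}$ (via $I(\lambda)=\lambda^{-1}$) is a worthwhile addition not present in the paper, which treats the rule purely as part of Definition~\ref{mapfI} and notes in Remark~\ref{rmk2} only that the lemma motivates it.
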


\begin{proof}
We have that:
\[
I\left( \frac{\lambda^{-1-\underset{i=0}{\overset{n}{\sum}}\, (i+1)k_i}}{z} \right)\ =\ \frac{\lambda^{1+\underset{i=0}{\overset{n}{\sum}}\, (i+1)k_i -1}}{z} \ =\ \frac{\lambda^{\underset{i=0}{\overset{n}{\sum}}\, (i+1)k_i}}{z}
\]
\end{proof}

\begin{remark}\label{rmk2}\rm
Lemma~\ref{coef} demonstrates the motivation for the Definition~\ref{mapfI}(ii) on $\frac{\lambda^k}{z}$, where $\frac{\lambda^k}{z}\overset{I}{\mapsto} \frac{1}{\lambda^{k+1}\, z}$.
\end{remark}

We now relate $tr(\tau)$ to $I\left(tr\left(f(\tau)\right)\right)$ using the fact that relations used in order to convert elements in $\Lambda^{aug_+}$ to sums of elements in $\left(\Lambda^{\prime}\right)^{aug_+}$, where $\left(\Lambda^{\prime}\right)^{aug_+}$ is defined as monomials in $t_i^{\prime}$'s with positive exponents, are ``symmetric'', as shown for example below:

\[
\begin{array}{lclclcl}
\sigma_i & = & q\, \sigma_i^{-1}\ +\ (q^{-1}-1) & \& & \sigma_i^2 & = & (q-1)\, \sigma_i\ +\ q\\
&&&&&&\\
\sigma_i^{-1} & = & q^{-1}\, \sigma_i\ +\ (q-1) & \& & \sigma_i^{-2} & = & (q^{-1}-1)\, \sigma_i^{-1}\ +\ q^{-1}\\
\end{array}
\]

\noindent Moreover, these relations lead to $q \overset{I}{\mapsto} q^{-1}$ in Definition~\ref{mapfI}(ii), and the rule $z \overset{I}{\mapsto} z\cdot \lambda$ comes from the fact that $tr(\sigma_i)\ =\ z$, while $tr(\sigma_i^{-1})\ =\ q^{-1}z+(q^{-1}-1)\ = \ \frac{z+1-q}{q}\ =\ \lambda\cdot z$. The reader is now referred to \cite{La1, La2, DL2, DL3, DL4, DLP} for other ``symmetric'' relations, and also for more details on the techniques applied in order to obtain the infinite change of basis matrix relating the sets $\Lambda$ and $\Lambda^{\prime}$.

\smallbreak

We are now in position to translate Theorems~\ref{convert}, \ref{tails} and \ref{exp} in the context of this paper:

\bigbreak

\begin{itemize}
\item[$\bullet$] Theorem~\ref{convert} suggests that an element $\tau$ in $\Lambda^{aug_+}$ can be written as a sum of its homologous word $\tau^{\prime}$ in $(\Lambda^{\prime})^{aug_+}$, the element $\tau$ followed by a braiding ``tail'' and monomials in $\Sigma$ which are of lower order than the initial monomial $\tau$.
\smallbreak
Similarly, an element $T$ in $\Lambda^{aug_-}$ can be written as a sum of its homologous word $T^{\prime}$ in $(\Lambda^{\prime})^{aug_-}$, the element $T$ followed by a braiding ``tail'' and monomials in $\Sigma$ which are of lower order than the initial monomial $T$.
\smallbreak
Moreover, as explained above, corresponding coefficients in these two processes will be ``symmetric'' and the braiding ``tails'' in $\tau$, after Theorem~\ref{convert} is applied, will be ``symmetric'' to the braiding ``tails'' in $T$.

\bigbreak

\item[$\bullet$] For the elements $\tau$ and $T$ that are followed by braiding ``tails'' in ${\rm H}_n(q)$, we apply Theorem~\ref{tails} and we have that corresponding coefficients will be ``symmetric'' only for the terms involving the parameter $q$, while for $z$, the reader is referred to the discussion after Remark~\ref{rmk2}.

\bigbreak

\item[$\bullet$] By Theorem~\ref{exp} the order of the exponents in an element in $\Lambda^{aug}$ can be altered, leading to elements of lower (or even greater) order. For the braiding ``tail'' occurring after applying Theorem~\ref{exp}, we apply Theorem~\ref{tails} again, and this procedure will eventually stop and the result will be a sum of elements in $\Lambda^{aug}$ of lower order than the initial element. For more details of how this procedure terminates the reader is referred to \cite{DL2} and \cite{DL3}.
\end{itemize}

We now have the following result:

\begin{prop}\label{trtau}
For $\tau\in \Lambda^{aug_+}_{(k)}$, where $k\in \mathbb{N}$, the following relation holds:
\[
tr(\tau)\ =\ I\left( tr\left( f(\tau) \right) \right),
\]
\noindent where $f(\tau)\in \Lambda^{aug_-}_{(-k)}$.
\end{prop}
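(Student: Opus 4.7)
My plan is to prove the identity by strong induction on the ordering of $\Lambda^{aug}_{(k)}$ from Section~\ref{ordsec}, expanding $tr(\tau)$ and $tr(f(\tau))$ in parallel and verifying term by term that $I$ matches up corresponding coefficients. The base case will be a monomial of minimal order in level $k$, whose trace can be written out directly from the defining rules of Theorem~\ref{tr}. The inductive hypothesis is that the claim holds for all elements of $\Lambda^{aug_+}_{(k)}$ strictly smaller than $\tau$.

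For the inductive step, I would first (when $\tau$ has unordered exponents) apply Theorem~\ref{exp} to replace $\tau$ by a sum of elements of $\Lambda^+$ plus strictly lower-order terms in $\Lambda^{aug}_{(k)}$, and perform the analogous reduction on $f(\tau)$; the two reductions are related by the involution $\sigma_i\leftrightarrow \sigma_i^{-1}$, $t_i^k\leftrightarrow t_i^{-k}$ because the conjugations and stabilizations underlying Theorem~\ref{exp} are ``symmetric'' in the sense of Definition~\ref{sym}. We may therefore assume $\tau = t^{k_0}t_1^{k_1}\cdots t_m^{k_m}\in \Lambda^+$ with $k_i>0$. Applying Eq.~(\ref{convert2}) and rule~(4) of Theorem~\ref{tr} yields
\[
tr(\tau)\;=\; q^{\sum_{n=1}^{m}nk_n}\, s_{k_m}\cdots s_{k_1}s_{k_0}\;+\;\mathcal{R}(\tau),
\]
where the leading term is the trace of the homologous word in $(\Lambda')^{aug_+}$ and $\mathcal{R}(\tau)$ is obtained from the remaining tail-and-lower-order summands of Eq.~(\ref{convert2}) by applying the gap-management relation Eq.~(\ref{gaps}) together with Theorems~\ref{tails} and~\ref{exp}; the result is a $\mathbb{C}[q^{\pm 1},z]$-linear combination of traces of monomials strictly smaller than $\tau$ in $\Lambda^{aug_+}_{(k)}$. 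The parallel computation for $f(\tau)$ produces
\[
tr(f(\tau))\;=\; q^{-\sum_{n=1}^{m}nk_n}\, s_{-k_m}\cdots s_{-k_0}\;+\;\mathcal{R}(f(\tau)).
\]
Because $I(q^{-1})=q$, $I(s_{-k})=s_k$ and the $s_k$ commute (Proposition~\ref{com}), $I$ sends the leading term of the second expansion to the leading term of the first. The desired equality $I(\mathcal{R}(f(\tau)))=\mathcal{R}(\tau)$ then follows from the inductive hypothesis, once one verifies that $f$ sets up a bijection between the strictly smaller monomials appearing in the two remainders and that the accompanying scalar coefficients are interchanged by $I$.

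The main obstacle I anticipate is precisely this verification of the coefficient matching inside the remainders. Theorem~\ref{tails} produces coefficients polynomial in both $q$ and $z$, and each time a $\sigma_i$ on the positive side is replaced by $\sigma_i^{-1}$ on the negative side one acquires an extra factor of $\lambda$, since $tr(\sigma_i^{-1})=\lambda z$ while $tr(\sigma_i)=z$. Lemma~\ref{coef} already confirms this matching for the bbm-coefficient (and, as Remark~\ref{rmk2} notes, it motivates the rule $I(\lambda^k/z)=1/(\lambda^{k+1}z)$), but the full argument requires tracking every occurrence of $z$ introduced by the reductions of Theorems~\ref{convert}, \ref{tails} and~\ref{exp}, and confirming that each is accounted for by the rules $I(z)=\lambda z$ and $I(q^{\pm 1})=q^{\mp 1}$. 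This is primarily a careful bookkeeping task, but it is the technical heart of the proof and the point at which the ``symmetric'' structure of the Hecke-algebraic relations relied upon throughout the paper must be used most systematically.
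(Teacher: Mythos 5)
Your proposal follows essentially the same route as the paper: strong induction on the ordering, expansion of $\tau$ and $f(\tau)$ in parallel via the change-of-basis relations into the homologous word in $(\Lambda^{\prime})^{aug_{\pm}}$ plus strictly lower-order terms, matching of the leading terms through $I(s_{-k})=s_k$ and $I(q^{\pm 1})=q^{\mp 1}$, and the inductive hypothesis for the remainder. The only organizational difference is that the paper makes the coefficient bookkeeping you flag as the ``technical heart'' local rather than global --- it peels off a single quadratic relation $\sigma_1 = q\sigma_1^{-1}+(q-1)$ (resp.\ its inverse) at each step, so that the two summands produced are manifestly ``symmetric'' with coefficients $q\leftrightarrow q^{-1}$ and $(q-1)\leftrightarrow(q^{-1}-1)$, and the lower-order piece is absorbed immediately by the inductive hypothesis.
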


\begin{proof}
Let $\tau:=\tau_{0, n}^{k_{0, n}}\in \Lambda^{aug_+}_{(k)}$, $k\in \mathbb{N}$ and $f(\tau):=\tau_{0, n}^{-k_{0, n}}\in \Lambda^{aug_-}_{(-k)}$. In order to evaluate $tr(\tau)$ we use the inverse of the change of basis matrix and express $\tau$ as a sum of elements in $\left(\Lambda^{\prime}\right)^{aug_+}_{(k)}$, i.e. $\tau \ \widehat{\cong} \ \underset{i}{\sum}\, A_i\, \tau_i^{\prime}$, where $A_i$ coefficients in $\mathbb{C}[q^{\pm 1}, z^{\pm 1}]$ for all $i$, such that $\exists\, j\ :\ \tau_j^{\prime}=\tau^{\prime} \sim \tau$ and $\tau_i^{\prime}<\tau^{\prime}$, for all $i \neq j$. Following the same steps in order to express $f(\tau)$ as a sum of elements in $\left(\Lambda^{\prime}\right)^{aug_-}_{(-k)}$, we prove that we obtain that $f(\tau)\ \widehat{\cong}\ \underset{i}{\sum}\, B_i\, f(\tau_i^{\prime})$, where $B_i$ coefficients in $\mathbb{C}[q^{\pm 1}, z^{\pm 1}]$ for all $i$, such that $\exists\, j\ :\ f(\tau_j^{\prime})=f(\tau^{\prime}) \sim f(\tau)$ and $f(\tau_i^{\prime})<f(\tau^{\prime})$, for all $i \neq j$, and also that $f(B_i)\ =\ A_i$, for all $i$. We prove that by strong induction on the order of $\tau$.

\smallbreak

The base of induction is $t^k\in \Lambda^{aug_+}_{(k)}$, where $tr(t^k)=s_k$ and $f(t^k)=t^{-k}$ and $tr(t^{-k})=s_{-k}$. We observe that $s_{-k}\overset{I}{\mapsto}s_k$ by Definition~\ref{mapfI}, and thus $I\left(tr\left( f(t^{-k}) \right) \right)\ =\ tr\left(t^k \right)$.

\smallbreak

Assume now that $I\left( tr\left( f(\tau_i) \right)\right) \ =\ tr(\tau_i)$, for all $\tau_i<\tau \in \Lambda^{aug_+}_{(k)}$. Then, for $\tau$ we have that:

\smallbreak

\[
\begin{array}{lcl}
\tau & := & \tau_{0, n}^{k_{0, n}} \ := \ t^{k_0}t_1^{k_1}\ldots t_n^{k_n}    \ = \ t^{k_0}\ldots t_n^{k_n-1}\cdot (\sigma_n \ldots \sigma_1\, t\, \underline{\sigma_1} \ldots \sigma_n) \ = \\  
&&\\
     & = & \underset{A}{\underbrace{q\, \tau_{0, n-1}^{k_{0, n-1}}\cdot t_n^{k_n-1} \cdot (\sigma_n\ldots \sigma_1\, t\, \sigma_1^{-1}\sigma_2\ldots \sigma_n)}} \ + \\
&&\\
     & + & \underset{B}{\underbrace{(q-1)\, t^{k_0+1}\cdot \tau_{1, n-1}^{k_{1, n-1}}\cdot t_n^{k_n-1}\cdot (\sigma_n \ldots \sigma_1 \sigma_2\ldots \sigma_n)}} \\
\end{array}
\]

\noindent and for $f(\tau)$ we obtain:

\[
\begin{array}{lcl}
f(\tau)  & := & \tau_{0, n}^{-k_{0, n}}\ := \ t^{-k_0}t_1^{-k_1}\ldots t_n^{-k_n} \ = \ t^{-k_0}\ldots t_n^{-k_n+1}\cdot (\sigma_n^{-1} \ldots \underline{\sigma_1^{-1}}\, t^{-1}\, \sigma_1^{-1} \ldots \sigma_n^{-1}) \ =\\  
&&\\
         & = & \underset{C}{\underbrace{q^{-1}\, \tau_{0, n-1}^{-k_{0, n-1}}\cdot t_n^{-k_n+1} \cdot (\sigma_n^{-1}\ldots \sigma_2^{-1}\sigma_1\, t\, \sigma_1^{-1}\ldots \sigma_n^{-1})}} \ + \\
&&\\
         & + & \underset{D}{\underbrace{(q^{-1}-1)\, t^{-k_0+1}\cdot \tau_{1, n-1}^{-k_{1, n-1}}\cdot t_n^{-k_n+1}\cdot (\sigma_n^{-1} \ldots \sigma_1^{-1} \ldots \sigma_n^{-1})}}\\
\end{array}
\]

Observe now that $D\ =\ f(B)$ and according to Definition~\ref{order} we have that

$$t^{k_0+1}\cdot \tau_{1, n-1}^{k_{1, n-1}}\cdot t_n^{k_n-1}\cdot (\sigma_n \ldots \sigma_1 \sigma_2\ldots \sigma_n)\ <\ \tau.$$ 

\noindent Thus, from the induction step we obtain that $I\left(tr(D)\right)\ =\ tr(B)$.

\smallbreak

We now apply Theorems~7, 8, 9 \& 10 in \cite{DL2} (relations used for the change of basis matrix and which are Theorems~\ref{convert}, \ref{tails}, \ref{exp} \& Eq.(\ref{gaps}) in this paper) on $A$ and $C$ and we use the induction step to reduce the complexity of the relations obtained each time a lower order term appears in the relations. As explained in \cite{DL2, DL4}, we will eventually obtain the homologous word in $\left(\Lambda^{\prime} \right)^{aug_+}_{k}$ for $\tau$ (coming from $A$) and the homologous word in $\left(\Lambda^{\prime} \right)^{aug_-}_{-k}$ for $f(\tau)$ (coming from $C$), with ``symmetric'' coefficients. Hence, 

\[
tr(\tau)\ =\ I\left( tr\left( f(\tau) \right) \right).
\]

\end{proof}

In order to proceed with the proof of Corolarry~\ref{cor1}, we will need the following relations (Lemma~2 \cite{La2}):

\begin{equation}\label{sllem}
\begin{array}{lclcll}
t_n^k\, \sigma_n & = & (q-1)\, \underset{j=0}{\overset{k-1}{\sum}}\, q^j\, t_{n-1}^j t_n^{k-j} & + & q^k\, \sigma_n\, t_{n-1}^k &,\ k\in \mathbb{N}\\
&&&&&\\
t_n^{-k}\, \sigma_n^{-1} & = & (q^{-1}-1)\, \underset{j=0}{\overset{-k+1}{\sum}}\, q^j\, t_{n-1}^j t_n^{-k-j} & + & q^{-k}\, \sigma_n\, t_{n-1}^{-k}\ \sigma_n^{-1} &,\ k\in \mathbb{N}
\end{array}
\end{equation}

\noindent and the following lemmas:
 
\begin{lemma}\label{imp}
For $n, k\in \mathbb{N}$, the following relations hold:
\[
\begin{array}{llcl}
{\rm i.} & t_n^{k} & = & \underset{j=1}{\overset{k-1}{\sum}}\, q^{j-1}\, (q-1)\, t_{n-1}^{j}\, t_n^{k-j}\cdot \sigma_n\ +\ q^{k-1}\, \sigma_n\, t_{n-1}^k\, \sigma_n\\
&&&\\
{\rm ii.} & t_n^{-k} & = & \underset{j=-1}{\overset{-k+1}{\sum}}\, q^{j+1}\, (q^{-1}-1)\, t_{n-1}^{j}\, t_n^{-k-j}\cdot \sigma_n^{-1}\ +\ q^{-k+1}\, \sigma_n^{-1}\, t_{n-1}^{-k}\, \sigma_n^{-1}\\
\end{array}
\]
\end{lemma}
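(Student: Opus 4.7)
The plan is to derive each identity directly from the corresponding formula in (\ref{sllem}) by isolating the degenerate $j=0$ summand and then applying the Iwahori--Hecke quadratic relation $\sigma_n^2=(q-1)\sigma_n+q$. Neither induction nor any new structural input is required; the entire argument reduces to algebra in ${\rm H}_{1,n}(q)$.

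For part (i), I would start from
\[
t_n^k\sigma_n = (q-1)\sum_{j=0}^{k-1} q^j t_{n-1}^j t_n^{k-j} + q^k \sigma_n t_{n-1}^k,
\]
isolate the $j=0$ summand on the right, which equals $(q-1)t_n^k$ since $t_{n-1}^0=1$, and transpose it to the left-hand side to obtain
\[
t_n^k\bigl(\sigma_n - (q-1)\bigr) = (q-1)\sum_{j=1}^{k-1} q^j t_{n-1}^j t_n^{k-j} + q^k \sigma_n t_{n-1}^k.
\]
The quadratic relation yields $\sigma_n-(q-1)=q\sigma_n^{-1}$, so the left-hand side collapses to $qt_n^k\sigma_n^{-1}$. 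Multiplying both sides on the right by $q^{-1}\sigma_n$ then produces $t_n^k$ on the left, and after shifting the exponents of $q$ inside the sum one obtains precisely the coefficients $q^{j-1}(q-1)$ and $q^{k-1}$ appearing in the statement of (i).

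Part (ii) is handled by the exactly parallel manipulation applied to the second relation in (\ref{sllem}). The $j=0$ summand contributes $(q^{-1}-1)t_n^{-k}$; after transposition, the factor $\sigma_n^{-1}-(q^{-1}-1)$ multiplies $t_n^{-k}$ on the left. The dual quadratic relation $\sigma_n^{-2}=(q^{-1}-1)\sigma_n^{-1}+q^{-1}$, derived by multiplying the original one by $\sigma_n^{-2}$, gives $\sigma_n^{-1}-(q^{-1}-1)=q^{-1}\sigma_n$, so the left-hand side becomes $q^{-1}t_n^{-k}\sigma_n$. Multiplying on the right by $q\sigma_n^{-1}$ and repackaging the powers of $q$ produces the required formula with coefficients $q^{j+1}(q^{-1}-1)$ and $q^{-k+1}$.

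The sole obstacle is the bookkeeping of the exponents of $q$ when the $j=0$ term is extracted and the surviving expression is re-multiplied by $\sigma_n^{\mp 1}$, together with a careful reading of the summation range $j=-1,\ldots,-k+1$ in (ii) as the $k-1$ negative integers between $-1$ and $-k+1$ inclusive. No input beyond (\ref{sllem}) and the Hecke quadratic relation is needed; in fact, the two identities are ``symmetric'' in the sense of Definition~\ref{sym}, so each one is the image of the other under the maps $f$ and $I$ of Definition~\ref{mapfI}, which provides an independent consistency check of the final expressions.
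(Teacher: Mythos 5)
Your derivation of (i) is correct, and it is a genuinely different (and shorter) route than the paper's. The paper proves (i) by first computing the $k=2$ case directly from the quadratic relation, writing $t_n^k=t_n^{k-2}\cdot t_n^2$, substituting the $k=2$ identity, and then invoking Eq.~(\ref{sllem}) a second time to expand the resulting factor $t_n^{k-2}\sigma_n$ before reindexing the sum. You instead take Eq.~(\ref{sllem}) as the sole input, peel off the $j=0$ summand, rewrite $\sigma_n-(q-1)$ as $q\sigma_n^{-1}$, and right-multiply by $q^{-1}\sigma_n$. Both arguments use exactly the same two ingredients, but yours avoids the auxiliary $k=2$ computation and the reindexing of the sum, which makes the bookkeeping of the powers of $q$ essentially automatic; the paper's version has the mild advantage of exhibiting the $k=2$ identity explicitly in the form it reuses later.

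One caveat on (ii). If you apply your manipulation literally to the second line of Eq.~(\ref{sllem}) as printed, its closing term $q^{-k}\,\sigma_n\,t_{n-1}^{-k}\,\sigma_n^{-1}$ becomes $q^{-k+1}\,\sigma_n\,t_{n-1}^{-k}\,\sigma_n^{-2}$ after right-multiplication by $q\sigma_n^{-1}$, which is \emph{not} the term $q^{-k+1}\,\sigma_n^{-1}\,t_{n-1}^{-k}\,\sigma_n^{-1}$ in the statement, so the ``exactly parallel manipulation'' does not literally close. The discrepancy is not yours: the printed second line of Eq.~(\ref{sllem}) already fails at $k=1$ (it would force $\sigma_n t_{n-1}^{-1}\sigma_n^{-1}=\sigma_n^{-1}t_{n-1}^{-1}$), and the form the paper actually uses elsewhere (e.g.\ in the trace computations of Lemma~\ref{basind}) is $t_n^{-k}\sigma_n^{-1}=(q^{-1}-1)\sum_j q^j t_{n-1}^j t_n^{-k-j}+q^{-k}\sigma_n^{-1}t_{n-1}^{-k}$. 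With that corrected input your argument for (ii) goes through verbatim, so you should state the corrected relation before invoking it rather than citing the displayed one. Your closing appeal to $f$ and $I$ is only a heuristic consistency check (those maps are defined on $\Lambda^{aug}$ and on trace values, not on arbitrary identities in ${\rm H}_{1,n}(q)$), but you present it as such, so no harm done.
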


\begin{proof}
We only prove relations (i). Relations (ii) follow similarly. For $k=2$ we have that:

\[
\begin{array}{lcl}
t_n^2 & = & \sigma_{n}\, t_{n-1}\, \underline{\sigma_n^2}\, t_{n-1}\, \sigma_n \ = \ (q-1)\, \underline{\sigma_{n}\, t_{n-1}\, \sigma_n}\, t_{n-1}\, \sigma_n\ +\ q\, \sigma_n\, t_{n-1}^2\, \sigma_n\ =\\
&&\\
& = & (q-1)\, t_{n-1}\, \, t_{n}\, \sigma_n\ +\ q\, \sigma_n\, t_{n-1}^2\, \sigma_n \\
\end{array}
\]

For $k\in \mathbb{N}$ we have:

\[
\begin{array}{lcl}
t_n^k\ =\ t_n^{k-2}\cdot \underline{t_n^2} & = & (q-1)\, t_n^{k-1} t_{n-1}\, \sigma_n\ +\ q\, \underline{t_n^{k-2} \sigma_n}\, t_{n-1}^2\, \sigma_n\ =\\
&&\\
 & \overset{Eq.~\ref{sllem}}{=} & (q-1)\, t_n^{k-1} t_{n-1}\, \sigma_n\ +\ q\, (q-1)\, \underset{j=0}{\overset{k-3}{\sum}}\, q^j\, t_{n-1}^j t_n^{k-2-j}\, t_{n-1}^2\, \sigma_n\ + \\
&&\\
& + & q^{k-1}\, \sigma_n\, t_{n-1}^{k-2}\, t_{n-1}^2\, \sigma_n\ = \\
&&\\
& = & \underset{j=1}{\overset{k-1}{\sum}}\, q^{j-1}\, (q-1)\, t_{n-1}^{j}\, t_n^{k-j}\cdot \sigma_n\ +\ q^{k-1}\, \sigma_n\, t_{n-1}^k\, \sigma_n\\
\end{array}
\]

\end{proof}

\begin{lemma}\label{llem}
For $n, k\in \mathbb{N}$, the following relations hold:
\[
\begin{array}{llcl}
{\rm i.} & t_n^{k}\, \sigma_{n+1} & = & q^{-k+1}\, \sigma_{n+1}^{-1}\, t_{n+1}^k\ +\ \underset{j=1}{\overset{k-1}{\sum}}\, q^{-j+1}\, (q^{-1}-1)\, t_{n}^{k-j}\, t_{n+1}^{j}\\
&&&\\
{\rm ii.} & t_n^{-k}\, \sigma_{n+1}^{-1} & = & q^{k-1}\, \sigma_{n+1}\, t_{n+1}^{-k}\ +\ \underset{j=1}{\overset{k-1}{\sum}}\, q^{j-1}\, (q-1)\, t_{n}^{-k+j}\, t_{n+1}^{-j}\\
\end{array}
\]
\end{lemma}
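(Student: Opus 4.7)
\textbf{Proof plan for Lemma~\ref{llem}.} I would prove part (i) by induction on $k$ and obtain part (ii) either by a parallel induction or by invoking the ``symmetric'' substitution $q\leftrightarrow q^{-1}$, $\sigma_i^{\pm 1}\leftrightarrow \sigma_i^{\mp 1}$, $t_i^{\pm k}\leftrightarrow t_i^{\mp k}$ discussed immediately before the lemma. The engine of the induction is the single exchange relation
\[
t_n\,\sigma_{n+1}\ =\ \sigma_{n+1}^{-1}\,t_{n+1},
\]
which is immediate from the definition $t_{n+1}=\sigma_{n+1}\,t_n\,\sigma_{n+1}$ upon multiplying on the left by $\sigma_{n+1}^{-1}$. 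This identity is also the content of part (i) at $k=1$, so it supplies the base case.

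For the inductive step, I would write $t_n^{k}\sigma_{n+1}=t_n\bigl(t_n^{k-1}\sigma_{n+1}\bigr)$ and substitute the hypothesis for $t_n^{k-1}\sigma_{n+1}$. The only non-trivial piece that appears is a term of the form $t_n\,\sigma_{n+1}^{-1}\,t_{n+1}^{k-1}$. To handle it, I would apply the Hecke quadratic relation in the form $\sigma_{n+1}^{-1}=q^{-1}\sigma_{n+1}+(q^{-1}-1)$ and then the base identity once more, obtaining
\[
t_n\,\sigma_{n+1}^{-1}\ =\ q^{-1}\,t_n\,\sigma_{n+1}\ +\ (q^{-1}-1)\,t_n\ =\ q^{-1}\sigma_{n+1}^{-1}\,t_{n+1}\ +\ (q^{-1}-1)\,t_n,
\]
so that
\[
q^{-k+2}\cdot t_n\,\sigma_{n+1}^{-1}\,t_{n+1}^{k-1}\ =\ q^{-k+1}\,\sigma_{n+1}^{-1}\,t_{n+1}^{k}\ +\ q^{-k+2}(q^{-1}-1)\,t_n\,t_{n+1}^{k-1}.
\]
The first summand reproduces the leading $q^{-k+1}\sigma_{n+1}^{-1}t_{n+1}^{k}$ of the target formula, and the second is exactly the missing $j=k-1$ term of the claimed sum, since $q^{-(k-1)+1}(q^{-1}-1)\,t_n^{k-(k-1)}\,t_{n+1}^{k-1}=q^{-k+2}(q^{-1}-1)\,t_n\,t_{n+1}^{k-1}$. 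All the remaining summands propagate unchanged from the induction hypothesis (only their $t_n$-power increments by one), yielding $\sum_{j=1}^{k-1}q^{-j+1}(q^{-1}-1)\,t_n^{k-j}\,t_{n+1}^j$, as required.

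For part (ii), the same scheme works with the base identity $t_n^{-1}\sigma_{n+1}^{-1}=\sigma_{n+1}\,t_{n+1}^{-1}$ (obtained by inverting the base of (i)) and the Hecke relation rewritten as $\sigma_{n+1}=q\,\sigma_{n+1}^{-1}+(q-1)$; the induction proceeds mutatis mutandis. Alternatively, since the two identities in the lemma are images of one another under the symmetric substitution recalled in the text, (ii) follows at once from (i).

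The only delicate point -- and the place one must be most careful -- is the re-indexing of the sum in the inductive step, verifying that the ``new'' summand produced by the Hecke expansion of $t_n\sigma_{n+1}^{-1}$ slots exactly into the missing $j=k-1$ position of the target sum. Everything else is manipulation within $\mathrm{H}_{1,n+2}(q)$ using only $t_{n+1}=\sigma_{n+1}t_n\sigma_{n+1}$ and the quadratic relation.
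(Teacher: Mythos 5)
Your proposal is correct and coincides with the paper's own argument: the paper also proves (i) by induction on $k$, with base case $t_n\sigma_{n+1}=\sigma_{n+1}^{-1}t_{n+1}$, writes $t_n^{k}\sigma_{n+1}=t_n\bigl(t_n^{k-1}\sigma_{n+1}\bigr)$, expands $\sigma_{n+1}^{-1}=q^{-1}\sigma_{n+1}+(q^{-1}-1)$ to produce the leading term and the missing $j=k-1$ summand, and dispatches (ii) with ``follows similarly.'' No differences worth noting.
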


\begin{proof}
We prove relations (i) by induction on $k\in \mathbb{N}$. Relations (ii) follow similarly.
\smallbreak
For $k=1$ we have that $t_n\, \sigma_{n+1}\ =\ \sigma_{n+1}^{-1}\, t_{n+1}$, which is true. Assume now that the relation holds for $k-1$. Then, for $k$ we have:
\[
\begin{array}{lcl}
t_n^{k}\, \sigma_{n+1} & = & t_n\, \underline{t_n^{k-1}\, \sigma_{n+1}}\ \overset{ind.}{\underset{step}{=}} \\
&&\\
& = & q^{-k+2}\, t_n\, \underline{\sigma_{n+1}^{-1}}\, t_{n+1}^{k-1}\ +\ \underset{j=1}{\overset{k-2}{\sum}}\, q^{-j+1}\, (q^{-1}-1)\, t_{n}^{k-j}\, t_{n+1}^{j}\ =\\
&&\\
& = & q^{-k+1}\, \underline{t_n\, \sigma_{n+1}}\, t_{n+1}^{k-1}\ +\ q^{-k+2}(q^{-1}-1)\, t_n\, t_{n+1}^{k-1}+\ \underset{j=1}{\overset{k-2}{\sum}}\, q^{-j+1}\, (q^{-1}-1)\, t_{n}^{k-j}\, t_{n+1}^{j}\ =\\
&&\\
& = & q^{-k+1}\, \sigma_{n+1}^{-1}\, t_{n+1}^k\ +\ \underset{j=1}{\overset{k-1}{\sum}}\, q^{-j+1}\, (q^{-1}-1)\, t_{n}^{k-j}\, t_{n+1}^{j}\\
\end{array}
\]
\end{proof}

We are now in position to prove the following lemma that serves as the basis of the induction applied in the final result of this section, Proposition~\ref{fl}, which will conclude the proof of Corollary~\ref{cor1}.

\begin{lemma}\label{basind}
Let $t^k\ \overset{bbm_{\pm}}{\longrightarrow}\ t^pt_1^k\, \sigma^{\pm 1}$ and $f(t^k):=t^{-k}\ \overset{bbm_{\mp}}{\longrightarrow}\ t^pt_1^{-k}\, \sigma^{\mp 1}$. Then, the following relations hold:
\[
I\left( tr(t^pt_1^{-k}\, \sigma_1^{\mp 1}) \right) \ = \ tr(t^pt_1^{k}\, \sigma_1^{\pm 1}).
\]
\end{lemma}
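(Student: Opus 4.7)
I plan to expand both $tr(t^pt_1^k\sigma_1)$ and $tr(t^pt_1^{-k}\sigma_1^{-1})$ explicitly as polynomials in $z,q,\lambda,s_*$ and then check term-by-term that the two sides are intertwined by $I$. The complementary case (with $\sigma_1^{-1}$ on the positive side and $\sigma_1$ on the negative side) is handled by an entirely parallel argument.

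For the positive side, Eq.~(\ref{sllem}) at $n=1$ gives directly
\[
t_1^k\sigma_1 \;=\; (q-1)\sum_{j=0}^{k-1} q^{j}\, t^{j}t_1^{k-j} \;+\; q^{k}\sigma_1 t^{k}.
\]
For the negative side, the base identity $t_1^{-1}\sigma_1^{-1} = (q^{-1}-1)t_1^{-1} + q^{-1}\sigma_1^{-1}t^{-1}$ follows from $t_1^{-1}=\sigma_1^{-1}t^{-1}\sigma_1^{-1}$ together with $\sigma_1^{-2}=(q^{-1}-1)\sigma_1^{-1}+q^{-1}$, and a short induction on $k$ using $tt_1=t_1t$ produces the $I$-symmetric companion
\[
t_1^{-k}\sigma_1^{-1} \;=\; (q^{-1}-1)\sum_{j=0}^{k-1} q^{-j}\, t^{-j}t_1^{-(k-j)} \;+\; q^{-k}\sigma_1^{-1}t^{-k}.
\]

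Multiplying each expansion by $t^p$ on the left, taking the Markov trace, and using cyclicity together with trace rule~(3) (plus $g_1^{-1}=q^{-1}g_1+(q^{-1}-1)$ for the negative boundary) reduces the boundary terms to $q^{k}z\,s_{p+k}$ and $q^{-k}\lambda z\,s_{p-k}$ respectively. Applying $I$ to the negative identity, the prefactor $(q^{-1}-1)q^{-j}$ becomes $(q-1)q^{j}$, and a calculation in the spirit of Lemma~\ref{coef} combined with $s_{p-k}\mapsto s_{p+k}$ sends the boundary $q^{-k}\lambda z\,s_{p-k}$ to $q^{k}z\,s_{p+k}$. The lemma therefore reduces to the sub-claim
\[
I\!\left(tr(t^{p-j}t_1^{-(k-j)})\right) \;=\; tr(t^{p+j}t_1^{k-j}),\qquad j=0,\ldots,k-1.
\]

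The sub-claim is proved by re-running the strong-induction argument underlying Proposition~\ref{trtau} in this mixed-sign setting. For each side one applies Theorem~\ref{convert} (whose hypotheses already allow $k_i\in\mathbb{Z}$ of either sign) to replace $t_1^{\pm(k-j)}$ by $(t_1')^{\pm(k-j)}$, manages the resulting gaps and braiding tails via~(\ref{gaps}) and Theorems~\ref{tails}--\ref{exp}, and terminates by trace rule~(4), which gives $tr(t^a(t_1')^b)=s_b s_a$. Coefficient matching is automatic because every Hecke-algebra identity used at a reduction step comes in an $I$-symmetric pair ($q\leftrightarrow q^{-1}$, $\sigma_i\leftrightarrow\sigma_i^{-1}$, with the correction $z\leftrightarrow\lambda z$ arising from $tr(\sigma_i)=z$ versus $tr(\sigma_i^{-1})=\lambda z$, as explained in the paragraphs preceding Proposition~\ref{trtau}); the $s$-indices are transported correctly by $s_{p-j}\mapsto s_{p+j}$, $s_{-(k-j)}\mapsto s_{k-j}$, $s_{p-k}\mapsto s_{p+k}$. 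The main obstacle is exactly this sub-claim: because $t^{p-j}t_1^{-(k-j)}$ has mixed-sign exponents and lies outside $\Lambda^{aug_+}\cup\Lambda^{aug_-}$, Proposition~\ref{trtau} cannot be invoked off the shelf; one must re-verify, at every inductive step, that the pairing with $I$ survives the presence of factors of opposite signs.
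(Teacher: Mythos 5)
Your argument is correct and has the same skeleton as the paper's proof: expand $t_1^{\pm k}\sigma_1^{\pm 1}$ by the slide relations (your negative-side identity is the right one --- it is what the paper actually uses in the computation, the boundary term of the second line of~(\ref{sllem}) as printed containing a typo), check by hand that $I$ carries the boundary $q^{-k}\lambda z\, s_{p-k}$ to $q^{k}z\, s_{p+k}$, and reduce everything to the ``symmetric'' residual pairs $t^{p+j}t_1^{k-j}$ versus $t^{p-j}t_1^{-(k-j)}$. Where you differ is in how those residuals are dispatched. The paper runs one strong induction on order, with the base case $k=1$ computed in full: for $j\neq 0$ the residuals are of strictly lower order and are absorbed by the outer inductive hypothesis, so the only genuinely new work is at $j=0$, where $t^pt_1^{\pm k}$ has the \emph{same} order as the element one started from; the paper handles this by one further explicit expansion isolating $q^{k-1}t^p(t_1')^{k}$ and $q^{-k+1}t^p(t_1')^{-k}$, whose traces $q^{k-1}s_ps_k$ and $q^{-k+1}s_ps_{-k}$ match under $I$, the remaining pieces carrying braiding tails and being of lower order, hence again falling to the induction. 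You instead package all $j$ into the uniform sub-claim $I\left(tr(t^{p-j}t_1^{-(k-j)})\right)=tr(t^{p+j}t_1^{k-j})$ and propose a Proposition~\ref{trtau}-style change-of-basis induction for it; you are right that the mixed signs block a direct citation of Proposition~\ref{trtau}, and this is precisely why the paper resorts to the hands-on expansion rather than quoting that proposition. Your route is viable and cleaner to state, but two economies are available: for $j\neq 0$ the sub-claim is already an instance of the strong induction hypothesis (those monomials are of lower order by Definition~\ref{order}), so the mixed-sign re-verification you flag as the main obstacle is only unavoidable at $j=0$; and there it collapses to the single leading-term identity $I\left(q^{-k+1}s_ps_{-k}\right)=q^{k-1}s_ps_k$ together with inductively controlled corrections --- exactly the paper's terms $C$ and $C'$.
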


\begin{proof}
We only prove that $I\left( tr(bbm_-(f(t^k))) \right) \ = \ tr(bbm_+(t^{k}))$, by strong induction on the order of $bbm_-(f(t^k))$. The case $I\left( tr(bbm_+(f(t^k))) \right) \ = \ tr(bbm_-(t^{k}))$ follows similarly.

\bigbreak

The base of induction is the case $k=1$, i.e. $I\left( tr(t^pt_1^{-1}\, \sigma_1^{-1}) \right) \ = \ tr(t^pt_1\, \sigma_1)$. We have that:

\[
\begin{array}{lclclclc}
tr(t^p \underline{t_1\, \sigma_1}) & = & (q-1)\, tr(t^p\underline{t_1}) & + & q\, tr(t^p\, \sigma_1\, \underline{t}) & = & \\
&&&&&&\\
                                   & = & q(q-1)\, tr(t^pt_1^{\prime})    & + & (q-1)^2\, tr(t^{p+1}\, \sigma_1) & + & q\, tr(t^{p+1}\, \sigma_1) & =\\ 
&&&&&&\\
 & = & q(q-1)\, s_1s_p & + & (q-1)^2\, z\, s_{p+1} & + & qz\, s_{p+1}&\\																
\end{array}
\]

\noindent and

\[
\begin{array}{lclclclc}
tr(t^p \underline{t_1^{-1}\, \sigma_1^{-1}}) & = & (q^{-1}-1)\, tr(t^p\underline{t_1^{-1}}) & + & q^{-1}\, tr(t^p\, \sigma_1^{-1}\, \underline{t^{-1}}) & = & \\
&&&&&&\\
                                   & = & q^{-1}(q^{-1}-1)\, tr(t^p{t_1^{\prime}}^{-1})    & + & (q^{-1}-1)^2\, tr(t^{p-1}\, \sigma_1^{-1}) & + &  & \\ 
&&&&&&\\
 & + &		q^{-1}\, tr(t^{p-1}\, \sigma_1^{-1})		& = &&&\\																			
&&&&&&\\
 & = & q^{-1}(q^{-1}-1)\, s_{-1}s_p & + & q^{-1}(q^{-1}-1)^2\, z\, s_{p-1} & + &  & \\
&&&&&&\\
& + & (q^{-1}-1)^3\, s_{p-1}& + & q^{-1}(q^{-1}-1)\, s_{p-1} & + & q^{-2}z\, s_{p-1}  &\\
\end{array}
\]

Moreover:
\[
\begin{array}{lclclc}
I\left( tr(t^p \underline{t_1^{-1}\, \sigma_1^{-1}})  \right) & = & I\left(q^{-1}(q^{-1}-1)\, s_{-1}s_p \right) & + & I\left(q^{-1}(q^{-1}-1)^2\, z\, s_{p-1}\right) & + \\
&&&&&\\
& + & I\left( (q^{-1}-1)^3\, s_{p-1} \right) & + &  I\left( q^{-1}(q^{-1}-1)\, s_{p-1} \right) &+\\
&&&&&\\
& + & I\left( q^{-2}z\, s_{p-1}\right) &= & &\\
&&&&&\\
& = & q(q-1)\, s_1s_p & + & q(q-1)^2\, \lambda z\, s_{p+1} & +\\
&&&&&\\
& + & (q-1)^3\, s_{p+1} & + & q^2\, z \lambda\, s_{p+1} & +\\
&&&&&\\
& + & q(q-)\, s_{p+1} & \Rightarrow &&\\
\end{array}
\]

\[
\begin{array}{lcl}
I\left( tr(t^p t_1^{-1}\, \sigma_1^{-1})  \right) & = & q(q-1)\, s_{1}s_p \ + \\
&&\\
 & + & \left[q(q-1)^2\, \frac{z+1-q}{qz}\ + \ (q-1)^3 \ +\ q^2z\, \frac{z+1-q}{qz}\ +\ q(q-1) \right]\, s_{p+1}\ =\\
&&\\
 & = & q(q-1)\, s_1s_p \ + \ (q-1)^2\, z\, s_{p+1} \ + \ qz\, s_{p+1} \ =\\
&&\\
& = & tr(t^pt_1^k\, \sigma_1) \\
\end{array}
\]

\bigbreak

Assume now that $I\left( tr(t^{p-i}t_1^{-k-i}\, \sigma_1^{-1}) \right) \ = \ tr(t^{p+i}t_1^{k-i}\, \sigma_1)$, for all $0<i<k$. Then, for $i=0$ we have:

\[
\begin{array}{lcl}
tr\left(t^p t_1^k\, \sigma_1 \right) & \overset{Eq.~\ref{sllem}}{=} & (q-1)\, \underset{j=0}{\overset{k-1}{\sum}}\, q^j\, t^{p+j} t_1^{k-j}\ +\ q^k\, z\, tr(t^{p+k})\ =\\
&&\\
 & = & (q-1)\, \underset{j=0}{\overset{k-1}{\sum}}\, q^j\, tr(t^{p+j} t_1^{k-j})\ +\ q^k\, z\, s_{p+k}\\
&&\\
tr\left(t^p t_1^{-k}\, \sigma_1^{-1} \right) & \overset{Eq.~\ref{sllem}}{=} & (q^{-1}-1)\, \underset{j=0}{\overset{-k+1}{\sum}}\, q^j\, tr(t^{p+j} t_1^{-k-j}) \ + \\ 
&&\\
& + & \ q^{-k-1}\, z\, tr(t^{p-k})\ +\ q^{-k}\, (q^{-1}-1)\, tr(t^{p-k})\ = \\
&&\\
 & = & (q^{-1}-1)\, \underset{j=0}{\overset{-k+1}{\sum}}\, q^j\, tr(t^{p+j} t_1^{-k-j}) \ + \ \left(q^{-k-1}\, z\, \ +\ q^{-k}\, (q^{-1}-1)\right)\, s_{p-k}
\end{array}
\]

We have that 
\[
\begin{array}{lcl}
I\left(\left(q^{-k-1}\, z \ +\ q^{-k}\, (q^{-1}-1)\right)\, s_{p-k}\right) & = & \left(q^{k+1}\, \lambda\, z\ +\ q^k\, (q-1)\right)\, s_{p+k}\ =\\
&&\\
& = & \left(q^{k+1}\, \frac{z+1-q}{qz}\, z\ +\ q^k\, (q-1)\right) \, s_{p+k} \ =\ q^k\, z\, s_{p+k}\\
\end{array}
\]

\smallbreak

Moreover, the terms $t^{p+j} t_1^{k-j}$ are of lower order than $t^p t_1^{k}$ for all $j\neq 0$ and thus, from the induction step we have that:
\[
I\left( tr(t^{p+j} t_1^{k-j}) \right)\ =\ tr(t^{p+j} t_1^{-k-j}),\ {\rm for\ all}\ j\neq 0.
\]

\smallbreak

For $j=0$ we have:

\[
\begin{array}{lclc}
t^pt_1^k & = &t^pt_1^{k-2}\, \underline{t_1^2} \ = \ (q-1)\, t^{p+1}t_1^{k-1}\, \sigma_1\ +\ q\, t^p\, \underline{t_1^{k-2}\, \sigma_1}\, t^2\sigma_1 & = \\
&&&\\
& = & (q-1)\, t^{p+1}t_1^{k-1}\, \sigma_1\ +\ q\, (q-1)\, \underset{j=0}{\overset{k-3}{\sum}}\, q^j t^{p+j+2}\,t_1^{k-2-j}\, \sigma_1\ +\ q^{k-2}\, t^p \sigma_1t^k\underline{\sigma_1} & = \\
&&&\\
& = & \underset{A}{\underbrace{(q-1)\, t^{p+1}t_1^{k-1}\, \sigma_1}}\ +\ \underset{B}{\underbrace{q\, (q-1)\, \underset{j=0}{\overset{k-3}{\sum}}\, q^j t^{p+j+2}\, t_1^{k-2-j}\, \sigma_1}} &+\\
&&&\\
& + & \underset{C}{\underbrace{q^{k-1}\, t^p {t_1^{\prime}}^k}}\ +\ \underset{D}{\underbrace{q^{k-2}\, (q-1)\, t^p\sigma_1t^k}} &  \\
\end{array}
\]

and

\[
\begin{array}{lclc}
t^pt_1^{-k} & = & t^pt_1^{-k+2}\, \underline{t_1^{-2}} \ = \ (q^{-1}-1)\, t^{p-1}\, t_1^{-k+1}\, \sigma_1^{-1}\ +\ q^{-1}\, t^p\, \underline{t_1^{-k+2}\, \sigma_1^{-1}}\, t^{-2}\sigma_1^{-1} & =\\
&&&\\
& = & (q^{-1}-1)\, t^{p-1}t_1^{-k+1}\, \sigma_1^{-1}\ +\ q\, (q-1)\, \underset{j=0}{\overset{-k+3}{\sum}}\, q^j t^{p+j-2}\,t_1^{-k+2-j}\, \sigma_1^{-1} & +\\
&&&\\
& + & q^{-k+2}\, t^p\, \underline{\sigma_1^{-1}}\, t^{-k}\sigma_1^{-1} & = \\
&&&\\
& = & \underset{A^{\prime}}{\underbrace{(q^{-1}-1)\, t^{p-1}t_1^{-k+1}\, \sigma_1^{-1}}}\ +\ \underset{B^{\prime}}{\underbrace{q\, (q-1)\, \underset{j=0}{\overset{-k+3}{\sum}}\, q^j t^{p+j-2}\,t_1^{-k+2-j}\, \sigma_1^{-1}}} & +\\
&&&\\
& + & \underset{C^{\prime}}{\underbrace{q^{-k+1}\, t^p {t_1^{\prime}}^{-k}}} \ +\ \underset{D^{\prime}}{\underbrace{q^{-k+2}\, (q^{-1}-1)\, t^p t^{-k} \sigma_1^{-1}}}    & \\
\end{array}
\]

Observe now now that in $A$ the term $t^{p+1}t_1^{k-1}\, \sigma_1$ is of lower order than $t^pt_1^k$ and also that the terms $A, A^{\prime}$ are ``symmetric''. From the induction step we have that $I\left(tr(A^{\prime})\right)\ =\ tr(A)$. For the same reasons $I\left(tr(B^{\prime})\right)\ =\ tr(B)$. Finally we have that:

\[
\begin{array}{lclclr}
tr(C) & = & q^{k-1}\, tr(t^p {t_1^{\prime}}^k) & = & q^{k-1}\, s_p\, s_k & \\
&&&&  & \Rightarrow\ I\left(q^{-k+1}\, s_p\, s_{-k}\right)\ =\ q^{k-1}\, s_p\, s_k \\
tr(C^{\prime}) & = & q^{-k+1}\, t^p {t_1^{\prime}}^{-k} & = & q^{-k+1}\, s_p\, s_{-k} &\\
\end{array}
\]

\noindent and that

\[
\begin{array}{lclcl}
tr(D) & = & q^{k-2}\, (q-1)\, tr(t^{p+k}\sigma_1) & = & q^{k-2}\, (q-1)\, z\, s_{p+k} \\
&&&& \\
tr(D^{\prime}) & = & q^{-k+2}\, (q^{-1}-1)\, tr(t^{p-k}\sigma_1^{-1}) & = & q^{-k+1}\, (q^{-1}-1)\, z\, s_{p-k}\ +\ q^{-k+2}\, (q^{-1}-1)^2\, s_{p-k}\\
\end{array}
\]

\[
\begin{array}{lcl}
I\left( tr(D^{\prime}) \right) & = & \left(q^{k-1}\, (q-1)\, \lambda\, z\ +\ q^{k-2}\, (q-1)^2 \right)\, s_{p-k}\\
&&\\
& = & \left(q^{k-1}\, (q-1)\, \frac{z+1-q}{qz}\, z\ +\ q^{k-2}\, (q-1)^2 \right)\, s_{p-k}\\
&&\\
& = & \left(q^{k-2}\, (q-1)\, (z+1-q)\ +\ q^{k-2}\, (q-1)^2 \right)\, s_{p-k}\\
&&\\
& = & q^{k-2}\, (q-1)\, z\, s_{p+k}\ \Rightarrow\\
&&\\
I\left( tr(D^{\prime}) \right) & = & tr(D)\\
\end{array}
\]

The proof is now concluded.
\end{proof}

We are now ready to state and prove the final proposition that concludes the proof of Corollary~\ref{cor1}.

\begin{prop}\label{fl}
Let $\tau\in \Lambda^{aug_+}_{(k)}$, where $k\in \mathbb{N}$, and $bbm_+\left(\tau\right)$ the result of the performance of a positive braid band move on $\tau$. Let also $f(\tau)\in \Lambda^{aug_-}_{(-k)}$ and $bbm_-\left(f(\tau)\right)$ the result of the performance of a negative braid band move on $f(\tau)$. Then, the following relation holds:
\[
I\left( tr\left( bbm_{\mp}\left(f(\tau) \right)\right) \right) \ = \ tr\left(bbm_{\pm }(\tau)\right),\ {\rm where}\ \tau\in \Lambda^{aug_+}_{(k)}.
\]
\end{prop}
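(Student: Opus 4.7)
The plan is to prove Proposition~\ref{fl} by strong induction on the ordering from Definition~\ref{order}, with the base case furnished by Lemma~\ref{basind} (which handles $\tau = t^k$, the minimal element of $\Lambda^{aug_+}_{(k)}$). For the inductive step, I fix $\tau := \tau_{0,n}^{k_{0,n}} \in \Lambda^{aug_+}_{(k)}$ and assume the identity $I(tr(bbm_{\mp}(f(\tau')))) = tr(bbm_{\pm}(\tau'))$ for every $\tau' < \tau$. The two central objects are
\[
bbm_+(\tau) \;=\; t^p\, t_1^{k_0} t_2^{k_1}\cdots t_{n+1}^{k_n}\, \sigma_1, \qquad bbm_-(f(\tau)) \;=\; t^p\, t_1^{-k_0} t_2^{-k_1}\cdots t_{n+1}^{-k_n}\, \sigma_1^{-1}.
\]

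Next, I would apply the change-of-basis algorithm used in Theorems~\ref{convert}, \ref{tails}, \ref{exp} together with Eq.~(\ref{gaps}), in exactly the sign-symmetric fashion spelled out in the three bullet points following Remark~\ref{rmk2}. Concretely, I would peel off the trailing $\sigma_1^{\pm 1}$ using the identities of Lemmas~\ref{imp} and \ref{llem} (and the auxiliary relation Eq.~(\ref{sllem})), so that $bbm_{\pm}(\tau)$ gets rewritten as (i) a multiple of its homologous word in $(\Lambda')^{aug_+}_{(k)}$ whose trace is simply a product of $s_j$'s, plus (ii) a sum of lower-order monomials possibly followed by a braiding tail, to which Theorem~\ref{exp} and then Theorem~\ref{tails} are applied until every summand is either a pure element of $\Lambda^{aug_+}_{(k)}$ of strictly lower order than $\tau$ or else reducible via the trace rules. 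The same procedure is run on $bbm_-(f(\tau))$ using the mirrored identities ($\sigma_i \leftrightarrow \sigma_i^{-1}$, $q\leftrightarrow q^{-1}$), producing the exactly $I$-symmetric expansion.

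Having produced parallel expansions, I would invoke the induction hypothesis on the lower-order summands and Proposition~\ref{trtau} (which yields the corresponding identity for traces of $\tau$ itself, not $bbm(\tau)$) on the homologous-word summands. The remaining task is to check that each scalar coefficient in the $bbm_-(f(\tau))$-expansion is mapped by $I$ to the matching scalar in the $bbm_+(\tau)$-expansion. This reduces to three elementary facts: the coefficient-level identity of Lemma~\ref{coef} (which handles the overall prefactor $\lambda^{\sum(i+1)k_i}/z$ comparing Eqs.~(\ref{eqpos}) and (\ref{eqneg})), the symmetry of the Hecke-quadratic expansions $\sigma_i^{\pm 2} = (q^{\pm 1}-1)\sigma_i^{\pm 1} + q^{\pm 1}$ under $q\leftrightarrow q^{-1}$, and the asymmetric relation $tr(\sigma_i) = z$ versus $tr(\sigma_i^{-1}) = \lambda z$, which is precisely what forces the rule $z \overset{I}{\mapsto} \lambda z$ in Definition~\ref{mapfI}(ii).

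The main obstacle will be the coefficient bookkeeping: each simplification step introduces new summands with $z$-dependent coefficients, and these accumulate as the algorithm unwinds the word. The delicate point is ensuring that the $z \overset{I}{\mapsto} \lambda z$ substitution correctly balances every appearance of a $tr(\sigma_i^{-1})$ against the corresponding $tr(\sigma_i)$ on the positive side, particularly because both $\lambda$ and $z$ themselves depend on $q$ via $\lambda = (z+1-q)/(qz)$, so that collapsing a product like $q^{k+1}\lambda z + q^k(q-1)$ to $q^k z$ (as in the final computation of Lemma~\ref{basind}) must happen uniformly at every recursive step. Once this $I$-symmetry is verified step-by-step for the elementary moves of Lemmas~\ref{imp}, \ref{llem} and Eq.~(\ref{sllem}) — as was essentially done in the proof of Lemma~\ref{basind} — the proposition follows by assembling the inductive pieces, completing the proof of Corollary~\ref{cor1}.
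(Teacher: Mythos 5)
Your proposal is correct and follows essentially the same route as the paper's own proof: strong induction on the order with Lemma~\ref{basind} as base case, peeling off the trailing $\sigma_1^{\pm 1}$ via Eq.~(\ref{sllem}), reducing the $j=0$ term through Lemmas~\ref{imp} and \ref{llem} (reordering exponents to force lower-order monomials), and matching coefficients under $I$ exactly as you describe. The one cosmetic difference is that the paper does not explicitly invoke Proposition~\ref{trtau} inside this proof but absorbs the homologous-word comparison into the same recursive reduction; this does not change the substance of the argument.
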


\begin{proof}
We only prove that $I\left( tr\left( bbm_{-}\left(f(\tau) \right)\right) \right) \ = \ tr\left(bbm_{+}(\tau)\right)$, by strong induction on the order of $bbm_{-}\left(f(\tau) \right)$. The case $I\left( tr\left( bbm_{+}\left(f(\tau) \right)\right) \right) \ = \ tr\left(bbm_{-}(\tau)\right)$ follows similarly. Let

\[
\begin{array}{rclrcl}
\tau & = & t^{k_0} t_1^{k_1}\ldots t_n^{k_n}, & f(\tau) & = & t^{-k_0} t_1^{-k_1}\ldots t_n^{-k_n},\\
&&&&\\
bbm_{+}(\tau) & = & t^{p} t_1^{k_0}\ldots t_n^{k_{n-1}}t_{n+1}^{k_n}\, \sigma_1, & bbm_{-}(f(\tau)) & = & t^{p} t_1^{-k_0}\ldots t_n^{-k_{n-1}}t_{n+1}^{-k_n}\, \sigma_1^{-1}.\\
\end{array}
\]

\smallbreak

The base of induction is Lemma~\ref{basind}. Assume now that $I\left( tr\left( bbm_{-}\left(f(\tau_i) \right)\right) \right) \ = \ tr\left(bbm_{+}(\tau_i)\right)$, for all $\tau_i<\tau$. Then, for $\tau$ we have that: 

\[
\begin{array}{lcl}
tr(t^{p} t_1^{k_0}\underline{\ldots t_n^{k_{n-1}}t_{n+1}^{k_n}\, \sigma_1}) & = & tr(t^{p}\, (\underline{t_1^{k_0}\cdot \sigma_1}) \ldots t_n^{k_{n-1}}t_{n+1}^{k_n}) \ \overset{Eq.~\ref{sllem}}{=}\\
&&\\
 & = & \underset{A}{\underbrace{(q-1)\, \underset{j=0}{\overset{k_0-1}{\sum}}\, q^j\, tr\left(t^{p+j}t_1^{k_0-j}\, \tau_{2, n+1}^{k_{1, n}} \right)}}\ +\ \underset{B}{\underbrace{q^{k_0}\, tr\left(t^{p+k_0}\, \tau_{2, n+1}^{k_{1, n}}\cdot \sigma_1 \right)}},\\
\end{array}
\]

\noindent and

\[
\begin{array}{lcl}
tr(t^{p} t_1^{-k_0}\underline{\ldots t_n^{-k_{n-1}}t_{n+1}^{-k_n}\, \sigma_1^{-1}}) & = & tr(t^{p}\, (\underline{t_1^{-k_0}\cdot \sigma_1^{-1}}) \ldots t_n^{-k_{n-1}}t_{n+1}^{-k_n}) \ \overset{Eq.~\ref{sllem}}{=}\\
&&\\
 & = & \underset{A^{\prime}}{\underbrace{(q^{-1}-1)\, \underset{j=0}{\overset{-k_0+1}{\sum}}\, q^j\, tr\left(t^{p+j}t_1^{-k_0-j}\, \tau_{2, n+1}^{-k_{1, n}} \right)}}\ +\\
&&\\
& + & \underset{B^{\prime}}{\underbrace{q^{-k_0}\, tr\left(t^{p-k_0}\, \tau_{2, n+1}^{-k_{1, n}}\cdot \sigma_1^{-1} \right)}}\\
\end{array}
\]

Observe now that the terms $B$ and $B^{\prime}$ are ``symmetric'' and also that the term $t^{p+k_0}\, \tau_{2, n+1}^{k_{1, n}}\cdot \sigma_1$ in $B$ has a ``gap'' in the indices, and thus, it is of lower order than $\tau$ according to Definition~\ref{order}(b)(ii)($\alpha$). Note also that in $A$, the terms $t^{p+j}t_1^{k_0-j}\, \tau_{2, n+1}^{k_{1, n}}$ are ``symmetric'' with the corresponding terms in $A^{\prime}$ for all $j$, and that for $j\neq 0$, all terms are of lower order than $\tau$. Thus, from the induction hypothesis, we have that
\[
\begin{array}{lclr}
I\left( tr(B^{\prime})\right) & = &  tr(B)  &\\
&&&\\
I\left( tr(A^{\prime})\right) & = &  tr(A)  & {\rm for\ all}\ j\neq 0.\\
\end{array}
\]

For $j=0$ in $A$ we obtain the element $tr\left(t^{p}\, \tau_{1, n+1}^{k_{0, n}} \right)$ and for $j=0$ in $A^{\prime}$ we obtain $tr\left(t^{p}\, \tau_{1, n+1}^{-k_{0, n}} \right)$. These element's are ``symmetric'' and we have the following:

\[
\begin{array}{lcl}
tr\left(t^{p}\, \tau_{1, n+1}^{k_{0, n}} \right) & = & tr\left(t^{p}\, \tau_{1, n}^{k_{0, n-1}}\, \underline{t_{n+1}^{k_n}} \right)\ =\ \underset{A}{\underbrace{\underset{j=1}{\overset{k_n-1}{\sum}}\, q^{j-1}\, (q-1)\, tr\left(t^p t_{1, n}^{k_{0, n-1}}\, t_n^j\, t_{n+1}^{k_n-j}\cdot \sigma_{n+1} \right)}}\\
&&\\
& + & \underset{B}{\underbrace{q^{k_n}\, t^p\, \tau_{1, n}^{k_{0, n-1}}\, \sigma_{n+1}\, t_n^{k_n}\, \sigma_{n+1}}}\\
&&\\
{\rm and} &&\\
&&\\
tr\left(t^{p}\, \tau_{1, n+1}^{-k_{0, n}} \right) & = & tr\left(t^{p}\, \tau_{1, n}^{-k_{0, n-1}}\, \underline{t_{n+1}^{-k_n}} \right)\ =\ \underset{A^{\prime}}{\underbrace{\underset{j=1}{\overset{k_n-1}{\sum}}\, q^{j+1}\, (q^{-1}-1)\, tr\left(t^p t_{1, n}^{-k_{0, n-1}}\, t_n^j\, t_{n+1}^{-k_n+j}\cdot \sigma_{n+1}^{-1} \right)}}\\
&&\\
& = & \underset{B^{\prime}}{\underbrace{q^{-k_n}\, t^p\, \tau_{1, n}^{-k_{0, n-1}}\, \sigma_{n+1}^{-1}\, t_n^{-k_n}\, \sigma_{n+1}^{-1}}}\\
\end{array}
\]

We observe again that the terms $A$ and $A^{\prime}$ are ``symmetric'' and of lower order than $\tau, f(\tau)$ and thus, from the induction hypothesis we have that $I\left( A^{\prime}\right)\ =\ tr\left(A \right)$. For the terms $B, B^{\prime}$ we only have that the coefficients are ``symmetric''. The idea is to change the order of particular exponents that will lead to lower order terms in the resulting sum, when applying Theorem~\ref{exp}. We demonstrate this technique for the case $k_{n-1}<k_{n}$:

\[
\begin{array}{lcl}
B & = & q^{k_n}\, tr\left(t^p\, \tau_{1, n-1}^{k_{0, n-2}}\, \underline{t_n^{k_{n-1}}\, \sigma_{n+1}}\, t_n^{k_n}\, \sigma_{n+1}\right)\ \overset{Lem.~\ref{llem}}{=}\\
&&\\
& = & \underset{j=1}{\overset{k_{n-1}-1}{\sum}}\, tr\left(. . .\right) \ +\ q^{k_n-k_{n-1}+1}\, tr\left(\underline{t^p\, \tau_{1, n-1}^{k_{0, n-2}}\, \sigma_{n+1}^{-1}}\, t_{n+1}^{k_{n-1}}\, t_n^{k_n}\, \sigma_{n+1}\right)\ =\\
&&\\
& = & \underset{j=1}{\overset{k_{n-1}-1}{\sum}}\, tr\left(. . .\right) \ +\ q^{k_n-k_{n-1}+1}\, tr\left(t^p\, \tau_{1, n-1}^{k_{0, n-2}}\, t_{n+1}^{k_{n-1}}\, t_n^{k_n}\right)
\end{array}
\]

Similarly for $B^{\prime}$ we obtain:

\[
B^{\prime}\ \overset{Lem.~\ref{llem}}{=}\ \underset{j=1}{\overset{k_{n-1}-1}{\sum}}\, tr\left(. . .\right) \ +\ q^{-k_n+k_{n-1}-1}\, tr\left(t^p\, \tau_{1, n-1}^{-k_{0, n-2}}\, t_{n+1}^{-k_{n-1}}\, t_n^{-k_n}\right)
\]

Monomials in $t_i$'s in the sums in these relations are ``symmetric'' and of lower order than the initial monomial, and, assuming $k_{n-1}<k_{n}$, same is true for $t^p\, \tau_{1, n-1}^{k_{0, n-2}}\, t_{n+1}^{k_{n-1}}\, t_n^{k_n}$. The result follows from the induction step.
\end{proof}

Lemma~\ref{coef} and Propositions~\ref{trtau} and \ref{fl} are summarized in the following diagram:

\[
\begin{array}{ccccc}
tr\left(f(\tau_{0, n}^{k_{0, n}})\right) & \overset{bbm_{\mp}}{=} &  \frac{\lambda^{-1-\underset{i=0}{\overset{n}{\sum}}\, (i+1)k_i}}{z} & \cdot & tr(t^p \tau_{1, n+1}^{-k_0, n}g_1^{-1})\\
&&&&\\
f\, \uparrow \, Prop.~\ref{trtau} &  & I\, \uparrow\, Lem.~\ref{coef} & & I\, \uparrow\, Prop.~\ref{fl}\\
&&&&\\
tr(\tau_{0, n}^{k_{0, n}}) & \overset{bbm_{\pm}}{=} &\frac{\lambda^{\underset{i=0}{\overset{n}{\sum}}\, (i+1)k_i}}{z} & \cdot & tr(t^p \tau_{1, n+1}^{k_0, n}g_1)\\
\end{array}
\]

The proof of Corollary~\ref{cor1}, and thus, the proof of the main theorem, Theorem~\ref{mthm}, is  now concluded.

\section{Toward the HOMFLYPT skein module of $L(p,1)$}\label{slp1}

In this section we present some results concerning the full solution of the infinite system of equations (\ref{inft}), a solution of which corresponds to computing $\mathcal{S}(L(p, 1))$. We start by only considering equations obtained by the performance of braid band moves on elements in $\Lambda^{aug_+}$ and we prove that the infinite system of equations splits into self-contained subsystems. More precisely:

\begin{lemma}\label{trs}
Let $\tau \in \Lambda^{aug_+}_k$. Then $tr(\tau)\ =\ \underset{i}{\sum}\, f_i(q,z)\, s_{1,v}^{u_{1,v}}$, where $s_{1,v}^{u_{1,v}}\ :=\ s_1^{u_1}s_2^{u_2}\ldots s_{v}^{u_v}$, such that $u_i \in \mathbb{N}$ for all $i$ and $\underset{i=1}{\overset{v}{\sum}}\, i\cdot u_i\ =\ k$.
\end{lemma}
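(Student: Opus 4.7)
The plan is to proceed by strong induction on the order of $\tau$ (Definition~\ref{order}) within $\Lambda^{aug_+}_{(k)}$. The underlying principle is that the trace of any element at level $k$ in the ``positive'' sub-module is a polynomial in the $s_i$'s which is weighted-homogeneous of weighted degree $k$ (where $s_i$ is assigned weight $i$) and in which only positive-index generators $s_i$ with $i\geq 1$ appear. So the lemma should ultimately fall out of the ``grading'' already visible at the level of the Turaev basis: $tr(t^{k_0}{t'_1}^{k_1}\cdots {t'_n}^{k_n}) = s_{k_n}\cdots s_{k_0}$ has weighted degree $\sum_j k_j = k$ whenever the $k_j$'s are all positive and sum to $k$.

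The base case is $\tau=t^k$, for which $tr(t^k)=s_k=s_k^{1}$; taking $v=k$, $u_k=1$ and $u_i=0$ for $i<k$, the condition $\sum_i i\cdot u_i = k$ is satisfied. For the inductive step, write $\tau=t^{k_0}t_1^{k_1}\cdots t_n^{k_n}$ and apply Equation~(\ref{convert2}) to obtain
\[
\tau \;=\; q^{\sum_{n} n k_n}\, t^{k_0}{t'_1}^{k_1}\cdots {t'_n}^{k_n} \;+\; \sum_i f'_i(q)\, \tau\cdot w_i \;+\; \sum_j g'_j(q)\, \tau_j\cdot u_j,
\]
with $\tau_j<\tau$ and $w_i,u_j\in {\rm H}_n(q)$. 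The trace of the homologous term is $q^{\sum n k_n} s_{k_n}s_{k_{n-1}}\cdots s_{k_0}$; by Proposition~\ref{com} this rearranges to $q^{\sum n k_n}\prod_{i\geq 1} s_i^{u_i}$ with $u_i=|\{\, j : k_j=i\,\}|$, and $\sum_i i\cdot u_i=\sum_j k_j=k$ as required. The remaining braiding-tail contributions $tr(\tau\cdot w_i)$ and $tr(\tau_j\cdot u_j)$ are processed by Theorem~\ref{tails}, which rewrites each of them as $\sum_m h_m(q,z)\,tr(\tau_m)$ with $\tau_m\in\Lambda^{aug_+}_{(k)}$ of order strictly less than $\tau$. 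Invoking the induction hypothesis on each $tr(\tau_m)$ and collecting terms yields the stated form for $tr(\tau)$.

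The main obstacle is verifying that the lower-order terms $\tau_j$ and $\tau_m$ produced by Equation~(\ref{convert2}) and Theorem~\ref{tails} indeed remain in the ``positive'' subset $\Lambda^{aug_+}_{(k)}$, i.e.\ retain strictly positive $t_i$-exponents whose sum is still $k$. Level preservation is automatic, since none of the underlying relations --- the Hecke quadratic identity, the inductive expansions in Lemma~\ref{imp}(i), or the commutation rule in Equation~(\ref{sllem}), first line --- alter the total $t$-exponent of a word. Positivity preservation is equally explicit: for $k\geq 1$, every monomial on the right-hand side of those identities carries only nonnegative $t_i$-exponents, with the apparent ``boundary'' cases $t_i^{0}=1$ merely shortening the word rather than flipping a sign. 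A parallel examination of Theorem~\ref{tails}, whose mechanism is conjugation and stabilization acting on the braiding tail, confirms that neither operation can inject a $t_i^{-\ell}$ factor when starting from a positive word. Once these structural facts are in place, the induction closes and the lemma follows.
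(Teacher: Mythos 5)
Your proof is correct and takes essentially the same route as the paper, whose own proof is the one-liner ``it derives directly from the change of basis matrix and the fourth rule of the trace'': your induction via Equation~(\ref{convert2}), Theorem~\ref{tails} and Proposition~\ref{com}, together with the level- and positivity-preservation checks on the rewriting relations, is precisely an unpacking of that statement. Nothing further is needed.
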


\begin{proof}
It derives directly from the change of basis matrix and the fourth rule of the trace.
\end{proof}

\begin{cor}
For $k\in \mathbb{N}$ we obtain an infinite self-contained system of equations by performing bbm's on elements in $\Lambda^{aug_+}_{(k)}$. That is, the system $(\ref{inft})$ splits into infinitely many self-contained subsystems of equations.
\end{cor}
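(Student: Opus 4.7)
The plan is to combine Lemma~\ref{trs} with the explicit form of the bbm equation (\ref{eqpos}) to track which $s$-monomials $s_1^{u_1}\cdots s_v^{u_v}$ can appear on each side, and to conclude that the equations coming from $\Lambda^{aug_+}_{(k)}$ only involve $s$-monomials of two specific weights determined by $k$.

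First, by Lemma~\ref{trs}, if $\tau\in \Lambda^{aug_+}_{(k)}$, then $tr(\tau)$ lies in the $\mathbb{C}[q^{\pm 1}, z^{\pm 1}]$-submodule spanned by those $s_1^{u_1}\cdots s_v^{u_v}$ with $\sum_i i\cdot u_i = k$; call this the \emph{weight-$k$} submodule. Turning to the right-hand side of (\ref{eqpos}), the element $t^p\tau_{1,n+1}^{k_{0,n}}$ belongs to $\Lambda^{aug_+}$ at level $p+k$, so I would invoke Theorem~\ref{tails} to absorb the braiding tail $g_1$ and rewrite $tr(t^p\tau_{1,n+1}^{k_{0,n}}g_1)$ as a $\mathbb{C}[q^{\pm 1}, z^{\pm 1}]$-linear combination of traces of elements of $\Lambda^{aug_+}_{(p+k)}$, noting that the level is preserved under that reduction. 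Applying Lemma~\ref{trs} termwise then shows that $tr(t^p\tau_{1,n+1}^{k_{0,n}}g_1)$ lies in the weight-$(p+k)$ submodule.

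Consequently, the equation coming from a bbm performed on $\tau\in \Lambda^{aug_+}_{(k)}$ is an identity between an element of the weight-$k$ submodule and an element of the weight-$(p+k)$ submodule. Varying $\tau$ over $\Lambda^{aug_+}_{(k)}$ produces an infinite set of such identities whose unknowns (the $s_i$'s packaged into monomials) are confined to these two weight strata; equations arising from $\Lambda^{aug_+}_{(k')}$ with $\{k',p+k'\}\cap\{k,p+k\}=\varnothing$ live in disjoint weight strata and therefore do not interact with the system indexed by $k$. This gives the desired splitting of the infinite system (\ref{inft}) into the claimed subsystems. The only point requiring verification is that the reductions of Theorem~\ref{tails} (and Theorem~\ref{exp}, should it be invoked when the exponents of $t^p\tau_{1,n+1}^{k_{0,n}}$ are not monotone) preserve the $\Lambda^{aug}$-level; this is implicit in those statements, since each rewriting keeps the total $t_i$-exponent sum constant. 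With that confirmed, the corollary follows directly from Lemma~\ref{trs}.
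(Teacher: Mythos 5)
Your argument is correct and is essentially the derivation the paper intends: the corollary is stated as an immediate consequence of Lemma~\ref{trs}, and your bookkeeping of the two weight strata $k$ and $p+k$ via Equation~(\ref{eqpos}) (together with the level-preservation already built into the statement of Theorem~\ref{tails}) matches the discussion the paper gives in \S\ref{gensetbas} when it introduces the subsystems $[S_{(k)}]$ and notes that their unknowns have weight $k$ and $p+k$. Your explicit caveat that $[S_{(k)}]$ and $[S_{(p+k)}]$ share unknowns, so that the genuinely disjoint pieces are those with $\{k,p+k\}\cap\{k',p+k'\}=\varnothing$, is a useful clarification that the paper only makes informally.
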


\begin{remark}\rm
Note that if instead of considering elements in $\Lambda^{aug_+}_{(k)}$ and performing $bbm_1$'s, we considered elements in the set $\Lambda^{aug}_{(k)}, k\in \mathbb{Z}$, and perform $bbm_1$'s, then the infinitely many sub-systems of equations would again be self-contained, but they would be of infinite dimension, i.e. the number of equations and unknowns in each sub-system would be infinite. This is due to the fact that the exponents in the monomials in $t_i$'s in $\Lambda^{aug}_{(k)}$ are arbitrary (positive and negative). We call such monomials, monomials of ``mixed'' exponents and we deal with the equations obtained by performing bbm's on these elements in a sequel paper. For more details the reader is referred to \cite{D}.
\end{remark}

\subsection{Potential bases for the submodules $\frac{\Lambda^{aug_+}}{<bbm>}\ \&\ \frac{\Lambda^{aug_-}}{<bbm>}$}\label{gensetbas}

We now present some sub-systems with their solutions (without evaluating the coefficients) and we present generating sets (which also form potential bases) for $\frac{\Lambda^{aug_+}}{<bbm>}$ and $\frac{\Lambda^{aug_-}}{<bbm>}$. In particular, we consider elements in the subset of level $k\in \mathbb{N}$ of $\Lambda^{aug_+}$, $\Lambda^{aug_+}_{(k)}$, and we perform positive and negative bbm's on their first moving strand. We present some of the equations of the infinite system obtained that way:

\smallbreak

\begin{itemize}
\item[$\bullet$] For $k=0$ we have $1 \in \Lambda^{aug_+}_{(0)}$ and applying $bbm_{\pm 1}$'s we obtain:
$$1\overset{bbm_{\pm }}{\rightarrow}t^p\sigma_1^{\pm 1} \Rightarrow \color{red} 1:=s_{0}=s_{p}$$ 
\color{black}

\smallbreak

\item[$\bullet$] For $k=1$ we have $t \in \Lambda^{aug_+}_{(1)}$ and applying $bbm_{\pm 1}$'s we obtain:
$$t\overset{bbm_{\pm }}{\rightarrow}t^pt_1\sigma_1^{\pm 1} \Rightarrow \color{red} s_{p+1}=s_{1}\ \color{black}\&\ \color{red} s_ps_1=a_0s_1$$
\color{black}

\bigbreak

\item[$\bullet$] For $k=2$ we have $ t^2, \, tt_1 \in \Lambda^{aug_+}_{(2)}$ and applying $bbm_{\pm 1}$'s we obtain:
	\begin{itemize}
		\item[] $t^2 \overset{bbm}\rightarrow t^pt_1^{2}\sigma_1^{\pm 1}:$
		\[\left\{\begin{array}{ccl}
			s_2 & = & a_1 s_{p+2} + a_2 s_{p+1}s_1 + a_3s_ps_2 \\
			s_2 & = & b_1s_{p+2} + b_2 s_{p+1}s_1
		\end{array}\right. \]
		\item[] $tt_1 \overset{bbm}\rightarrow t^pt_1t_{2}\sigma_1^{\pm 1}:$ \[\left\{\begin{array}{ccl}
		s_2 + s_1^2& = & c_1 s_{p+2} + c_2 s_{p+1}s_1 \\
		s_2 +s_1^2 & = & d_1 s_{p+2} + d_2 s_{p+1}s_1 + d_3s_ps_2 + d_4s_p s_1^2
		\end{array}\right. \]
			\end{itemize}
and thus:\color{red}
\[\left\{\begin{array}{ccl}
			s_{p+2} & = & A_1 s_{2} + A_2 s_1^2 \\
			s_2s_p & = & B_1 s_{2} + B_2 s_1^2\\
            s_1s_{p+1} & = & C_1 s_{2} + C_2 s_1^2\\
            s_p s_1^2 & = & D_1 s_{2} + D_2 s_1^2\\
		\end{array}\right. \]
\bigbreak
\color{black}
\noindent where $a_i, A_i, b_i, B_i, c_i, C_i, d_i, D_i \in \mathbb{C},\ \forall\ i$.
\end{itemize}

\bigbreak

Observe now that for a fixed $k\in \mathbb{N}$, the set $\Lambda^{aug_+}_{(k)}$ has $\underset{i=0}{\overset{k-1}{\sum}}\, \bigl(\begin{smallmatrix}
k-1\\
i
\end{smallmatrix}\bigr) = 2^{k-1}$ elements and by performing a positive and a negative bbm on each element in $\Lambda^{aug_+}_{(k)}$, we obtain $2^k$ equations. We denote the subsystem obtained from elements in $\Lambda^{aug_+}_{(k)}$ by $[S_{(k)}]$. From Lemma~\ref{trs} we have that the unknowns in $[S_{(k)}]$ are of the form $s_1^{u_1}s_2^{u_2}\ldots s_{v}^{u_v}$, such that $u_i \in \mathbb{N}$ for all $i$ and $\underset{i=1}{\overset{v}{\sum}}\, i\cdot u_i\ =\ k$. Note also that the unknowns of the subsystem $[S_{(k)}]$ are related to the unknowns of the subsystems $[S_{(p+k)}], [S_{(2p+k)}], \ldots, [S_{(np+k)}],\ n\in \mathbb{N}$, by performing $bbm_1$'s, and thus, the solutions of the subsystems 
$$[S_{(0)}], [S_{(1)}], \ldots, [S_{(p-1)}]$$ 
\noindent produce a generating set of $\frac{\Lambda^{aug_+}}{<bbm>}$. Recall also that the unknown of the infinite system $s_m$, corresponds to the looping generator ${t_i^{\prime}}^m$ for any $i\in \mathbb{N}$, since $t_i^{\prime}$'s are conjugates.

\bigbreak

These lead to the following theorem:

\begin{thm}\label{gensetp}
The set
\begin{equation}\label{setp}
\left\{{t^{\prime}}^{k_0}{t^{\prime}}^{k_1} \ldots {t_n^{\prime}}^{k_n},\ {\rm where}\ n,\, k_i\in \mathbb{N}\ :\ 0\leq k_i\leq p-1 \right\}
\end{equation}
\noindent is a generating set of the module $\frac{\Lambda^{aug_+}}{<bbm_1>}$.
\end{thm}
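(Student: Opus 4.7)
The plan is to translate the statement into a reduction on monomials in the commuting unknowns $s_1, s_2, \ldots$ provided by Lemma~\ref{trs} and the remark after Theorem~\ref{tr}, namely $tr\left((t_0')^{k_0}(t_1')^{k_1}\cdots (t_n')^{k_n}\right)=s_{k_n}\cdots s_{k_1}s_{k_0}$. Since $s_{0}=1$, it will suffice to show that every $s_m$ with $m\geq p$ can be written, modulo the bbm relations, as a polynomial in $s_1,\ldots,s_{p-1}$; combined with the commutativity of the $s_i$'s (Proposition~\ref{com}), this forces the set in the theorem to span. So my first step is to pass from elements of $\Lambda^{aug_+}$ to polynomial expressions in $s_1, s_2, \ldots$, using the change of basis matrix of \S\ref{turtolam} so that each $\tau \in \Lambda^{aug_+}_{(k)}$ is matched with a polynomial of total $s$-degree $k$ (in the grading $\deg s_j = j$) in the unknowns $s_1,\ldots,s_k$.

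Next, I would exploit the shift of level $k \mapsto k+p$ induced by $bbm_{\pm 1}$. For each $\tau \in \Lambda^{aug_+}_{(k)}$, the two equations $X_{\widehat{\tau}} = X_{\widehat{bbm_{\pm 1}(\tau)}}$ become polynomial identities of the form (polynomial in $\{s_j\}_{j\leq k}$) $=$ (polynomial in $\{s_j\}_{j\leq k+p}$) of total $s$-degree $k+p$ on the right-hand side. The explicit calculations shown in the excerpt for $[S_{(0)}]$, $[S_{(1)}]$, $[S_{(2)}]$ illustrate how to solve the resulting $2^{k}$ equations of subsystem $[S_{(k)}]$ for the level-$(k+p)$ unknowns. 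I would then run a strong induction on $m \geq p$: for $p \leq m \leq 2p-1$, writing $m=p+r$ with $0\leq r\leq p-1$, the subsystem $[S_{(r)}]$ (whose level-$r$ unknowns lie in $\mathbb{C}[s_1,\ldots,s_{p-1}]$) eliminates $s_m$; for $m\geq 2p$, write $m=qp+r$ and apply subsystem $[S_{((q-1)p+r)}]$, whose level-$(m-p)$ side already reduces to $\mathbb{C}[s_1,\ldots,s_{p-1}]$ by induction, thereby reducing $s_m$ itself. This is precisely the cascading structure $[S_{(k)}]\leadsto [S_{(k+p)}]\leadsto [S_{(k+2p)}]\leadsto\cdots$ mentioned after the examples, reinforced by the fact that each subsystem is self-contained (Corollary immediately after Lemma~\ref{trs}).

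The hard part will be verifying, within each fixed $[S_{(k)}]$, that the coefficient matrix of the level-$(k+p)$ monomials is actually invertible, so that those unknowns can in fact be expressed in the level-$k$ ones rather than only satisfying a nontrivial linear relation among themselves. My plan for this is to order $\Lambda^{aug_+}_{(k)}$ by Definition~\ref{order} and to use the leading-term structure of $tr(bbm_\pm(\tau))$: the leading monomial on the right-hand side, coming from the trace of the braiding factor $\sigma_1^{\pm 1}$, is (up to a nonzero scalar involving $z,q,\lambda$) the product $s_p\cdot s_{k_0}s_{k_1}\cdots s_{k_n}$ for $\tau = t^{k_0}t_1^{k_1}\cdots t_n^{k_n}$; the correction terms produced by Theorems~\ref{convert}, \ref{tails}, \ref{exp} involve strictly lower-order monomials in the ordering of Definition~\ref{order}. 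This triangularity should yield an invertible coefficient matrix restricted to the "new" unknowns, completing the solvability step. With solvability in hand, assembling the reductions for $m=p,p+1,\ldots$ finishes the induction, and then the correspondence $s_{k_0}\cdots s_{k_n}\leftrightarrow (t_0')^{k_0}\cdots (t_n')^{k_n}$ rewrites the resulting generating set as the one displayed in Equation~(\ref{setp}).
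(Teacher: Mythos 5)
Your overall route coincides with the paper's: decompose the system into the level subsystems $[S_{(k)}]$, observe that a $bbm_{\pm 1}$ raises the level by $p$ so that $[S_{(k)}]$ ties the level-$k$ unknowns to the level-$(k+p)$ ones, and induct on $m\geq p$ to eliminate every $s_m$, leaving only the monomials $s_{k_0}\cdots s_{k_n}$ with $0\leq k_i\leq p-1$, which correspond to the set in Eq.~(\ref{setp}). In fact the paper offers essentially nothing beyond this: its argument for Theorem~\ref{gensetp} is the informal paragraph preceding the statement together with the worked subsystems $[S_{(0)}],[S_{(1)}],[S_{(2)}]$, and the solvability of each $[S_{(k)}]$ for the ``new'' unknowns is read off from the pattern of the examples (uniqueness being deferred to Conjecture~\ref{con1}). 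So you have correctly isolated the one step that genuinely needs an argument, and your proposal is, if anything, more explicit about the logical structure than the text.

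However, the triangularity argument you sketch for that step does not close it by itself. For $k<p$, the level-$(k+p)$ monomials that must be eliminated are exactly the $s_{p+j}\cdot(\hbox{partition of }k-j)$ for $0\leq j\leq k$; their number is the sum of the partition numbers of $0,1,\dots,k$. Your proposed leading terms $s_p\cdot s_{k_0}\cdots s_{k_n}$ range only over $s_p\cdot(\hbox{partition of }k)$, i.e.\ the $j=0$ unknowns, of which there are strictly fewer as soon as $k\geq 1$: already for $k=2$ you have four unknowns $s_{p+2},\ s_{p+1}s_1,\ s_ps_2,\ s_ps_1^2$ but only two candidate leading terms $s_ps_2,\ s_ps_1^2$ (and with respect to Definition~\ref{order} the monomial $s_{p+1}s_1$ actually dominates $s_ps_2$, so even the identification of the leading term needs care). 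To recover the unknowns with $j\geq 1$ you need a second mechanism --- for instance, that appending $t_{n+1}^{i}$ to both sides of a relation $\tau-bbm_{\pm 1}(\tau)$ produces again a relation of the same type, so that the reductions of $s_{p+j}$ already obtained from the lower subsystem $[S_{(j)}]$ may be multiplied by level-$(k-j)$ monomials --- combined with your $j=0$ step to get the required full column rank. Without such a supplement the invertibility claim remains unestablished; to be fair, the paper does not establish it either, but since this is precisely the content separating ``generating set'' from a vacuous statement, your write-up should treat it as a lemma to be proved rather than as a plausibility remark.
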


\begin{remark}\rm
From Theorem~\ref{dlpl} we have that $\frac{\Lambda^{aug_+}}{<bbm_1>}\ =\ \frac{\Lambda^+}{<bbm_i>}$, and as explained in \cite{DLP, DL4}, $\frac{\Lambda^+}{<bbm_i>}\ =\ \frac{{\Lambda^{\prime}}^+}{<bbm_i>}$. Thus, the set in Eq.~\ref{setp} forms a basis for $\mathcal{B}^+/<bbm_i>$.
\end{remark}

\smallbreak

Moreover, results on the infinite system so far suggest that each subsystem admits unique solution and thus, $\frac{\Lambda^{aug_+}}{<bbm>}$ is torsion free, which suggest that the following conjecture is true:

\begin{conj}\label{con1}
The set
$$\left\{{t^{\prime}}^{k_0}{t^{\prime}}^{k_1} \ldots {t_n^{\prime}}^{k_n},\ {\rm where}\ n, \, k_i\in \mathbb{N}\ :\ 0\leq k_i\leq p-1 \right\}$$
\noindent forms a basis for $\frac{\Lambda^{aug_+}}{<bbm_1>}$.
\end{conj}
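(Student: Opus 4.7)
The plan is to start from Theorem~\ref{gensetp}, which already establishes that the set in question generates $\frac{\Lambda^{aug_+}}{<bbm_1>}$, and then to pin down linear independence. By the corollary following Lemma~\ref{trs}, the infinite system of equations splits into self-contained subsystems $[S_{(k)}]$ indexed by $k\in\mathbb{N}$, so it suffices to work at each level $k$ separately. Under the invariant $X$ (equivalently, under $\mathrm{tr}$), and invoking the commutativity of the $s_i$'s from Proposition~\ref{com}, each proposed basis element ${t'}^{k_0}{t_1'}^{k_1}\cdots {t_n'}^{k_n}$ with $0\le k_i\le p-1$ and $\sum k_i=k$ maps to the commutative monomial $s_{k_0}s_{k_1}\cdots s_{k_n}$; after deleting the $s_0=1$ factors these are in bijection with partitions of $k$ whose parts lie in $\{1,\ldots,p-1\}$. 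The problem is thus reduced to showing that, for each $k$, these partition monomials are algebraically free modulo the bbm-relations at level $k$.

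My approach would be to set up an ordering-plus-triangularity argument analogous to the one used in \S\ref{turtolam} to compare $\Lambda$ and $\Lambda'$. I would order the unknowns in $[S_{(k)}]$ so that the ``pure'' partition monomials with parts $<p$ come first (refining Definition~\ref{order}), and the unknowns containing at least one $s_p$ factor or an index $\ge p$ (i.e.\ $s_{mp+r}$ with $m\ge 1$) come later. Each equation in $[S_{(k)}]$ arises by combining a $bbm_{\pm 1}$ with Theorems~\ref{convert}, \ref{tails}, \ref{exp} and Eq.~(\ref{gaps}), and the small cases $k=0,1,2$ tabulated in \S\ref{gensetbas} suggest that the equation takes the form ``a later-ordered unknown equals a polynomial in strictly earlier-ordered unknowns, with an invertible leading coefficient.'' A careful inductive bookkeeping of these leading terms, mirroring the lower-triangular matrix with invertible diagonal that relates $\Lambda$ to $\Lambda'$ in Theorem~\ref{convert}, would give unitriangularity of the coefficient matrix of $[S_{(k)}]$; the partition monomials with parts $\le p-1$ would then be forced to be free parameters while every other unknown is uniquely determined. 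Once this is in place at every $k$, the invariant $X$ separates the proposed basis elements and linear independence follows; the corresponding statement for $\frac{\Lambda^{aug_-}}{<bbm_1>}$ is then free of charge via Corollary~\ref{cor1}.

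The main obstacle, in my estimation, is proving unitriangularity \emph{uniformly} in $k$ and $p$: a bbm applied to a monomial of level $k$ with several nonzero exponents mixes many partition monomials, and the coefficients are rational functions of $q,z$ whose nonvanishing must be tracked through the iterated application of Lemmas~\ref{imp} and \ref{llem}. Two routes seem promising to me. The first is a specialization argument at $q=1$, where the type~B Hecke algebra degenerates and the coefficient matrix should acquire a combinatorial shape (a Kostka-type transition between power-sum and monomial symmetric functions in the $s_i$), from which invertibility over $\mathbb{C}[q^{\pm1},z^{\pm1}]$ would follow by semicontinuity; the second is a cardinality comparison with the \cite{GM}-basis. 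By Theorem~\ref{dlpl} the quotient $\frac{\Lambda^{aug_+}}{<bbm_1>}$ maps into $\mathcal{S}(L(p,1))$, and the positive-weight part of the diagrammatic basis of \cite{GM} has at each level $k$ exactly as many generators as there are partitions of $k$ with parts in $\{1,\ldots,p-1\}$; any nontrivial dependence among our generators would then contradict freeness of the \cite{GM}-basis, closing the proof.
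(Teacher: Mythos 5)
First, a point of order: the paper does not prove this statement. It is stated as Conjecture~\ref{con1}; the only result actually established is Theorem~\ref{gensetp}, that the set \emph{generates} $\frac{\Lambda^{aug_+}}{<bbm_1>}$, supplemented by the informal remark that computations ``so far suggest that each subsystem admits unique solution and thus $\frac{\Lambda^{aug_+}}{<bbm>}$ is torsion free.'' So there is no proof in the paper to compare yours against, and your proposal must be judged on whether it closes the gap the author deliberately left open (deferring to \cite{DL5}). It does not. Your reduction --- generation from Theorem~\ref{gensetp}, splitting into the self-contained subsystems $[S_{(k)}]$, and isolating a triangularity-with-invertible-diagonal statement for each $[S_{(k)}]$ as the crux --- is exactly where the paper stops, and you candidly flag that crux as the main obstacle; but neither of your two closing routes works as stated.

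The comparison with the \cite{GM} basis does not close the argument. A dependence among your generators in $\frac{\Lambda^{aug_+}}{<bbm_1>}$ would indeed descend to $\mathcal{S}(L(p,1))$, but to contradict anything you need the \emph{images} of your generators to be linearly independent there, and they are not GM basis elements: \S\ref{further} already shows $s_{p-1}=s_{-1}$ and $s_{p-2}=A_1's_{-2}+A_2's_{-1}^2$, so each image is a nontrivial linear combination of GM generators, the level grading by $\sum_i k_i$ is not respected by this rewriting, and proving the images independent is again a triangular change-of-basis problem of the same difficulty as the one you are trying to avoid. Your cardinality claim is also false as stated: the GM exponents range over $-p/2\leq k_i<p/2$, so the number of its ``positive'' monomials of level $k$ is the number of partitions of $k$ with parts less than $p/2$, not with parts in $\{1,\dots,p-1\}$ (already for $p=3$, $k=2$ these counts are $1$ and $2$). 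The $q=1$ specialization is likewise insufficient on its own: a determinant that is nonzero at $q=1$ is merely a nonzero element of $\mathbb{C}[q^{\pm1},z^{\pm1}]$, which gives uniqueness of solutions over the fraction field but cannot exclude torsion (a relation $(q-1)\cdot x=0$ survives every such test), and torsion-freeness is part of what is being conjectured. What is genuinely missing is the uniform inductive bookkeeping, through Theorems~\ref{convert}, \ref{tails}, \ref{exp} and Lemmas~\ref{imp}, \ref{llem}, showing that each $[S_{(k)}]$ is triangular with \emph{unit} diagonal over the ground ring with respect to a suitable ordering; your proposal names this step but does not supply it.
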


By Theorem~\ref{mthm} we have that the solution of the infinite system of equations $\frac{\Lambda^{aug_-}}{<bbm>}$ is derived from the solution of the infinite system of equations $\frac{\Lambda^{aug_+}}{<bbm>}$. Combined with Theorem~\ref{gensetp}, this leads to the following:

\begin{thm}\label{gensetn}
The set
$$\left\{{t^{\prime}}^{k_0}{t^{\prime}}^{k_1} \ldots {t_n^{\prime}}^{k_n},\ {\rm where}\ n,\, k_i\in \mathbb{Z}\backslash \mathbb{N}\ :\ 0\geq k_i\geq -p+1 \right\}$$
\noindent is a generating set of the module $\frac{\Lambda^{aug_-}}{<bbm>}$.
\end{thm}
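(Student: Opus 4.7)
The plan is to transport the generating set of Theorem~\ref{gensetp} across the correspondence established in Theorem~\ref{mthm} via the maps $f$ and $I$ of Definition~\ref{mapfI}. First, I would observe that the set appearing in Theorem~\ref{gensetn} is exactly the image of the set in Eq.~\ref{setp} under $f$: since $f$ inverts each exponent of every $t_i$ (and, equivalently under the homologous correspondence of Definition~\ref{homw}, of every $t^{\prime}_i$), the conditions $0 \leq k_i \leq p-1$ turn into $0 \geq k_i \geq -p+1$, and the number $n$ of factors is unchanged. Hence at the level of sets, what remains to prove is that every element $T$ of $\Lambda^{aug_-}$ can be reduced, using relations of the form $\mu - bbm_1(\mu) = 0$ for $\mu\in \Lambda^{aug_-}$, to a $\mathbb{C}[q^{\pm1},z^{\pm1}]$-linear combination of elements in $f(\text{Eq.~\ref{setp}})$.

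Next, I would pick an arbitrary $T \in \Lambda^{aug_-}_{(-k)}$ with $k\in\mathbb{N}$, and write $T = f(\tau)$ for the unique $\tau\in \Lambda^{aug_+}_{(k)}$. By Theorem~\ref{gensetp}, there is an identity
\[
\tau \ =\ \sum_{i} c_i\, \mu_i \ +\ \sum_{j} d_j\, \bigl(\rho_j - bbm_{\pm 1}(\rho_j)\bigr),
\]
where each $\mu_i$ lies in the set of Eq.~\ref{setp} (so $0\leq k_i\leq p-1$) and each $\rho_j\in \Lambda^{aug_+}$. Applying $f$ to this identity produces a corresponding expression whose first sum lands in the proposed generating set for $\Lambda^{aug_-}$, while the correcting terms $f(\rho_j) - bbm_{\mp 1}(f(\rho_j))$ are precisely the bbm-relations on $\Lambda^{aug_-}$. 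This is the content of Corollary~\ref{cor1}: the commutative diagram guarantees that a reduction of $\tau$ via positive/negative $bbm_1$'s mirrors a reduction of $f(\tau)$ via the opposite $bbm_1$'s, with coefficients related by the map $I$. Hence $T$ reduces, modulo the bbm-relations, to a linear combination (with coefficients that are images under $I$ of the $c_i$'s) of elements in the set of Theorem~\ref{gensetn}.

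The main conceptual step, and what I would treat as the delicate point, is making sure that the equivalence $I(X_{\widehat{f(\tau)}} = X_{\widehat{bbm_{\mp 1}(f(\tau))}}) \Leftrightarrow X_{\widehat{\tau}} = X_{\widehat{bbm_{\pm 1}(\tau)}}$ really lifts to a reduction of monomials and not merely of trace-values. Concretely, the evaluations $X_{\widehat{\cdot}}$ recover $\mathcal{S}(\mathrm{ST})$ on elements of $\Lambda^{\prime}$ (distinct monomials give distinct values), so the quotient $\Lambda^{aug}/\langle bbm_1\rangle$ is faithfully detected by the trace. Combined with Proposition~\ref{trtau} (which identifies $tr(\tau)$ with $I(tr(f(\tau)))$) and Proposition~\ref{fl} (which does the same after performing a bbm), this allows me to convert the trace-level correspondence of Theorem~\ref{mthm} into a module-level reduction, completing the argument that the image under $f$ of the generating set of Theorem~\ref{gensetp} is indeed a generating set of $\frac{\Lambda^{aug_-}}{<bbm_1>}$.
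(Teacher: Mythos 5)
Your proposal is correct and follows essentially the same route as the paper, which derives Theorem~\ref{gensetn} directly by combining Theorem~\ref{mthm} (the $f$/$I$ correspondence between the positive and negative bbm-systems) with the generating set of Theorem~\ref{gensetp}. In fact you supply more detail than the paper does --- in particular the observation that the set is precisely $f$ of the set in Eq.~\ref{setp}, and the remark that the trace faithfully detects the quotient so that the trace-level equivalence lifts to a module-level reduction --- both of which the paper leaves implicit.
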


And similarly to Conjecture~\ref{con1}, we have that:

\begin{conj}
The set 
$$\left\{{t^{\prime}}^{k_0}{t^{\prime}}^{k_1} \ldots {t_n^{\prime}}^{k_n},\ {\rm where}\ n,\, k_i\in \mathbb{N}\ :\ 0\geq k_i\geq -p+1 \right\}$$ \noindent forms a basis for $\frac{\Lambda^{aug_-}}{<bbm>}$.
\end{conj}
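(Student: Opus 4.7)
By Theorem~\ref{gensetn} the indicated set already generates $\frac{\Lambda^{aug_-}}{<bbm_1>}$, so the only thing to prove is linear independence. My plan is to transport linear independence from the positive side by combining Conjecture~\ref{con1} with the maps $f$ and $I$ of Definition~\ref{mapfI}.

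First I would set up the bijective dictionary. The automorphism $f$ sends ${t'_0}^{k_0}\cdots {t'_n}^{k_n}$ (with $0\le k_i\le p-1$) to ${t'_0}^{-k_0}\cdots {t'_n}^{-k_n}$ (with $0\ge -k_i\ge -(p-1)$), giving a bijection between the conjectured positive basis of Conjecture~\ref{con1} and the conjectured negative basis in the statement above. Second, I would check that $I:R[z^{\pm 1},s_k]\to R[z^{\pm 1},s_k]$ is an involution: the rules $q^{\pm 1}\leftrightarrow q^{\mp 1}$, $s_{-k}\leftrightarrow s_k$, $s_{p-k}\leftrightarrow s_{p+k}$ are visibly involutive, while the pair $z\mapsto \lambda z$ and $\frac{\lambda^k}{z}\mapsto \frac{1}{\lambda^{k+1}z}$ squares to the identity after rewriting $\frac{1}{\lambda^{k+1}z}=\frac{\lambda^{-k-1}}{z}$, which is mapped back to $\frac{\lambda^k}{z}$. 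In particular $I$ is an $R$-module isomorphism of the coefficient ring of the invariant~$X$.

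With these structural facts in hand, the argument is short. Assume a non-trivial relation $\sum_i A_i\cdot T_i=0$ in $\frac{\Lambda^{aug_-}}{<bbm_1>}$, where the $T_i$ range over the conjectured negative basis and the $A_i\in R$ are not all zero. Applying $I$ to coefficients and $f^{-1}$ to monomials yields $\sum_i I(A_i)\cdot f^{-1}(T_i)=0$ in some quotient module. By Corollary~\ref{cor1} the commuting diagram guarantees that every $bbm_{\mp 1}$-relation imposed on the negative side is the image under the pair $(f,I)$ of a $bbm_{\pm 1}$-relation on the positive side, and conversely; hence the transported relation actually lives in $\frac{\Lambda^{aug_+}}{<bbm_1>}$. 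Since $I$ is invertible, non-triviality is preserved, and the existence of such a relation would contradict Conjecture~\ref{con1}. Combined with the generating property of Theorem~\ref{gensetn}, this proves that the set forms a basis.

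The \emph{main obstacle} is, of course, Conjecture~\ref{con1} itself, whose validity is assumed throughout the transport argument. Establishing it requires showing that each self-contained sub-system $[S_{(k)}]$ ($0\le k\le p-1$) has a unique solution in the unknowns $s_1^{u_1}\cdots s_v^{u_v}$ with $\sum i u_i=k$ supplied by Lemma~\ref{trs}, and that the full system on $\bigcup_{k\ge 0}[S_{(p+k)}], [S_{(2p+k)}],\ldots$ reduces faithfully to these. The natural strategy is strong induction on $k$, exploiting the fact that $|\Lambda^{aug_+}_{(k)}|=2^{k-1}$ elements produce $2^k$ equations through $bbm_{\pm 1}$ and that every $s_{p+m}$, $s_p s_m$, $s_{p+1}s_{m-1}$, etc., can be solved for a polynomial in $s_1,\ldots,s_{p-1}$ by reading the bbm-equations alongside the change-of-basis triangularity recorded in Theorems~\ref{convert}--\ref{exp}. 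A secondary but delicate technical step is verifying that no coefficient produced by evaluating $\mathrm{tr}$ escapes the sub-ring on which $I$ is defined; this boils down to keeping track of the identity $\lambda=(z+1-q)/(qz)$ and checking compatibility on generators, as already witnessed in the proofs of Lemma~\ref{coef} and Proposition~\ref{fl}.
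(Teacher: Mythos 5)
The statement you are asked to prove is stated in the paper as a \emph{conjecture}: the paper offers no proof, only the remark that it follows ``similarly to Conjecture~\ref{con1}'' once Theorem~\ref{mthm} is used to transport the solution of the positive system to the negative one. Your proposal reconstructs exactly that intended route --- generation from Theorem~\ref{gensetn}, plus transport of linear independence through the pair $(f,I)$ of Definition~\ref{mapfI} and the commuting diagram of Corollary~\ref{cor1} --- so in terms of strategy you are aligned with the paper. But you should be clear that what you have written is a \emph{conditional reduction}, not a proof: every step of the independence argument is predicated on Conjecture~\ref{con1}, which is itself open in the paper (the paper only asserts that partial computations ``suggest'' each subsystem $[S_{(k)}]$ admits a unique solution). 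You name this as the ``main obstacle'' in your last paragraph, which is honest, but it means the statement remains unproven; the substantive mathematical content --- showing that the $2^k$ equations of $[S_{(k)}]$ determine the unknowns $s_1^{u_1}\cdots s_v^{u_v}$ of Lemma~\ref{trs} uniquely in terms of $s_1,\ldots,s_{p-1}$, and that no torsion arises --- is not supplied here or in the paper.

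Two smaller points deserve care if you pursue the transport argument rigorously. First, your claim that $I$ is an involution is not immediate from Definition~\ref{mapfI}: the rule $s_{-k}\mapsto s_{k}$ is stated only for $k\in\mathbb{N}$ (so the image of $s_k$ for $k>0$ is not literally specified), the rules $s_{-k}\mapsto s_k$ and $s_{p-k}\mapsto s_{p+k}$ overlap at $s_0$, and the compatibility of $z\mapsto\lambda z$ with $q^{\pm1}\mapsto q^{\mp1}$ under the identity $\lambda=(z+1-q)/(qz)$ needs to be checked rather than asserted. The paper never needs $I$ to be an involution or even a ring isomorphism --- it only uses the directed identities of Lemma~\ref{coef} and Propositions~\ref{trtau} and~\ref{fl} --- so if your independence transport relies on invertibility of $I$ on the whole coefficient ring, that is an additional claim requiring verification. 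Second, the phrase ``yields $\sum_i I(A_i)\cdot f^{-1}(T_i)=0$ in some quotient module'' glosses over the point that one must verify the transported relation is a consequence of the $bbm_{\pm1}$ relations on the positive side and of nothing else; Corollary~\ref{cor1} gives a bijection of the defining relations, which is the right ingredient, but the passage from ``relations correspond'' to ``the quotient modules are isomorphic as $R$-modules via $(f,I)$'' should be spelled out, especially since $I$ twists the scalars.
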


\subsection{Further Research}\label{further}

We now consider the subsystems of equations obtained by performing braid band moves on elements in $\Lambda^{aug_-}_{(k<0)}$. We use Theorem~\ref{mthm} and the equations obtained from elements in $\Lambda^{aug_+}$, presented above.

\smallbreak

\begin{itemize}
\item[$\bullet$] For $k=-1$ we have $t^{-1} \in \Lambda^{aug_-}_{(-1)}$ and: $t^{-1}\overset{bbm_{\pm }}{\rightarrow}t^pt_1^{-1}\sigma_1^{\pm 1} \Leftrightarrow \color{red} s_{p-1}=s_{-1} \ \color{black}\&\ \color{red} s_ps_{-1}=a^{\prime}_0s_1$ \color{black}.

\bigbreak

\item[$\bullet$] For the elements in $\Lambda^{aug_-}_{(-2)}$ we have:
	\begin{itemize}
		\item[] $t^{-2} \overset{bbm}\rightarrow t^pt_1^{-2}\sigma_1^{\pm 1}$
		\smallbreak
		\item[] $t^{-1}t_1^{-1} \overset{bbm}\rightarrow t^pt_1^{-1}t_{-2}\sigma_1^{\pm 1}$
and:\color{red}
\[\left\{\begin{array}{ccl}
			s_{p-2} & = & A_1^{\prime} s_{-2} + A_2^{\prime} s_{-1}^2 \\
			s_{-2}s_p & = & B_1^{\prime} s_{-2} + B_2^{\prime} s_{-1}^2\\
            s_{-1}s_{p-1} & = & C_1^{\prime} s_{-2} + C_2^{\prime} s_{-1}^2\\
            s_p s_{-1}^2 & = & D_1^{\prime} s_{-2} + D_2^{\prime} s_{-1}^2\\
		\end{array}\right. \]
\bigbreak
\color{black}
\noindent where $A_i^{\prime}, B_i^{\prime}, C_i^{\prime}, D_i^{\prime} \in \mathbb{C},\ \forall\ i$.
\end{itemize}
\end{itemize}

\bigbreak

Observe now for example that the unknown $s_{p-1}$ is equal to $s_{-1}$ and that the unknown $s_{p-2}$ can be written as a combination of $s_{-2}$ and $s_{-1}^2$. Thus, $s_{p-1}, s_{p-2}$ will not be in the basis of $\mathcal{S}\left( L(p,1) \right)$. The above examples of equations combined with the equations presented before for the system obtained from elements in $\Lambda^{aug_+}_{(k>0)}$, and results from \cite{D}, suggest that the following set forms a basis for $\mathcal{S}(L(p, 1))$: 

\[
\left\{{t^{\prime}}^{k_0}{t^{\prime}}^{k_1} \ldots {t_n^{\prime}}^{k_n},\ {\rm where}\ n,\, k_i\in \mathbb{Z}\ :\ -p/2 \leq k_i < p/2 \right\}
\]

It is worth mentioning that the same set was obtained and proved to be a basis for $\mathcal{S}(L(p, 1))$ in \cite{GM} using diagrammatic methods. In \cite{DL5} we work toward proving this result using braids and techniques described within this paper. We have reasons to believe that the braid technique can be successfully applied in order to compute skein modules of other more complicated c.c.o. 3-manifolds (such as the lens spaces $L(p, q), q>1$), where diagrammatic methods fail to do so.

\end{document}